\definecolor{softgreen}{rgb}{0.0, 0.6, 0.0}
\newcommand{\myitem}[1]{%
\item[#1]\protected@edef\@currentlabel{#1}%
}
\newcommand{\argmin}{\operatornamewithlimits{argmin\,}}
\newcommand{\arginf}{\operatornamewithlimits{arginf\,}}
\newcommand{\argsup}{\operatornamewithlimits{argsup\,}}
\newcommand{\E}{\mathbb{E}}
\newcommand{\1}{\mathds{1}}
\newcommand{\MD}{\mathsf{L}}
\mathchardef\mhyphen="2D
\def \y{y_{1:n}}
\def \U{u_{1:m}}
\def \t{\theta}
\def \E{\mathbb{E}}
\def \dt {\mathrm{d}}
\newcommand{\KL}{\mathrm{KL}}
\newcommand{\MMD}{\operatorname{MMD}}
\newtheorem{remark}{Remark}
\newtheorem{assumption}{Assumption}
\declaretheorem[name=Theorem]{theorem}
\declaretheorem[name=Corollary]{corollary}
\declaretheorem[name=Lemma]{lemma}
\newcommand\blfootnote[1]{%
\begingroup
\renewcommand\thefootnote{}\footnote{#1}%
\addtocounter{footnote}{-1}%
\endgroup
}
\newtheorem{example}{Example}
\crefname{assumption}{Assumption}{Assumptions}
\begin{document}

\def\spacingset#1{\renewcommand{\baselinestretch}%
{#1}\small\normalsize} \spacingset{1}

\title{Exact Sampling of Gibbs Measures with Estimated Losses}
\author{
\large
David T. Frazier$^{* 1}$, Jeremias Knoblauch$^{ \dagger * 2}$, Jack Jewson$^{1}$, and Christopher Drovandi$^{3}$
}
\date{
\normalsize
$^1$\textit{Monash University, Department of Econometrics and Business Statistics, Australia}
\newline    $^2$\textit{University College London, Department of Statistical Science, United Kingdom}
\\
$^3$\textit{Queensland University of Technology, School of Mathematical Sciences, Australia}
\\[2ex] 
\today
\vspace*{-1cm}
}
\maketitle
\spacingset{1.8} 
\begin{abstract}

\blfootnote{$^\dagger$ Corresponding author. j.knoblauch@ucl.ac.uk; 
Room 144, 1-19 Torrington Pl, London WC1E 7HB
}\blfootnote{$^*$ Both authors contributed equally.}
In recent years, the shortcomings of Bayesian posteriors as inferential devices have received increased attention.
A popular strategy for fixing them has been to instead target a Gibbs measure based on losses that connect a parameter of interest to observed data.
However, existing theory for such inference procedures assumes these losses are analytically available, while in many situations these losses must be stochastically estimated using pseudo-observations. In such cases, we show that when standard Markov Chain Monte Carlo algorithms are used to produce posterior samples, the resulting posterior exhibits strong dependence on the number of pseudo-observations: unless the number of pseudo-observations diverge sufficiently fast the resulting posterior will concentrate very slowly. However, we show that in many situations it is feasible to alleviate this dependence entirely using a modified piecewise deterministic Markov process (PDMP) sampler, and we formally and empirically show that these samplers produce posterior draws that have no dependence on the number of pseudo-observations used to estimate the loss within the Gibbs Measure.
We apply our results to three examples that feature intractable likelihoods and model misspecification.
\end{abstract}

\vspace*{-0.25cm}
\textbf{Keywords:} Generalised Bayes, Simulation-based inference, Bayesian asymptotics.

\doparttoc 
\faketableofcontents 
\part{} 

\section{Introduction}

Bayesian 
inference has long been the gold standard for principled statistical methodology reliant on the inclusion of prior knowledge.
Beyond that, the Bayes posterior provides a natural way for quantifying uncertainty about the data-generating process under study \citep[e.g.][]{robert2007bayesian, bernardo2009bayesian}.
However, orthodox Bayesianism also suffers from various pathologies in modern data-rich environments, complex models, and machine learning tasks \citep[see][]{berger1994overview, bissiri:etal:2016, knoblauch2019generalized}.
For example, under model misspecification, the Bayes posterior concentrates towards sub-optimal parameter values \citep{draper1995assessment},   provides miscalibrated parameter inferences \citep{bunke1998asymptotic, walker2013bayesian}, and
produces brittle inferences  \citep{owhadi2015brittleness, jewson2018principles}.
Similarly, the Bayesian paradigm is severely challenged by intractable likelihood functions \citep[][]{lyne2015russian, dellaporta2022robust}, where
Markov chain Monte Carlo (MCMC) methods cannot be deployed without first approximating the likelihood function itself \citep[][]{turner2014generalized, papamakarios2019sequential, durkan2020contrastive}, or have to be replaced by more computationally demanding strategies like approximate Bayesian computation  \citep[][]{beaumont2010approximate, fearnhead2012constructing, frazier2018asymptotic, frazier2020model}.

While model misspecification and intractable likelihoods are unrelated problems, various modern solutions  to both problems take the same form: a direct change  of the inference target.
More specifically, rather than focusing on the Bayes posterior, a host of contemporary proposals instead base parameter inferences on 
alternative Gibbs measures \citep[][]{hooker2014bayesian, knoblauch2018doubly, matsubara2022robust,matsubara2023generalisedDFD,altamirano2023robustGP,altamirano2023robustCP}.
While conceptually straightforward, this  leads to a new and hitherto unaddressed theoretical and algorithmic challenge. 
Specifically, many of the desired Gibbs measures must be constructed from losses that cannot be computed without stochastic estimation based on simulations from a statistical model for the data.
Notable examples include  $\beta$-divergences, see \Cref{sec:beta-div-definition}, and the maximum mean discrepancy (hereafter, MMD), see \Cref{sec:mmd-introduction}, both of which are  popular in the literature on generalised Bayes \citep[][]{ghosh2016robust, knoblauch2018doubly, cherief2020mmd, pacchiardi2024generalized}.

The currently prevalent approach for sampling from such loss-based posteriors is based on approximating the intractable loss via simulation from the assumed model, and then using it within MCMC algorithms  (see, e.g., \citealp{cherief2020mmd}, \citealp{frazier2022bayesian}, \citealp{kaji2023metropolis} and \citealp{pacchiardi2024generalized} for specific examples). 
As is well-understood thanks to the literature on pseudo-marginal MCMC (P-MCMC) however, the de-facto target of a sampling scheme  constructed in this way is different from the nominal target \citep[see e.g.][]{andrieu:roberts:2009}.
Unfortunately, it is generally unclear what level of computational effort is required to make the de-facto target sufficiently close to the nominally targeted Gibbs measure.
Indeed, apart from the work of \citet{alquier2016noisy}, the existing literature provides little beyond heuristic ideas for how one could even address this question.

In this paper, we re-examine this problem, and make two key contributions: 
first, we provide a thorough theoretical characterisation of the error when naively applying P-MCMC methods.
This investigation yields a significant negative finding and shows that P-MCMC methods generally require an unreasonably large computational budget to produce reliable posterior approximations to Gibbs measures with estimated losses.
Our second contribution consists in a constructive approach for overcoming this shortcoming: using a simple unbiasedness condition for the gradients of an estimated loss in conjunction with a computational upper bound, 
we show that piecewise deterministic Markov processes (PDMPs) based on the work of \citet{bierkens2019zig} provide a computationally efficient sampling strategy for Gibbs measures based on estimated losses.
We demonstrate the merits of this approach both empirically and theoretically, and focus on applications with particularly popular loss functions like the $\beta$-divergences and MMD.

The remainder of the paper proceeds as follows: In \Cref{sec:setup}, we elaborate upon the nature of the problem under study, and introduce two running examples that serve as motivation throughout the remainder of the paper. 
Next, we characterise the rates of convergence when using P-MCMC for Gibbs measures based on estimated losses in  \Cref{sec:theory-new}.
The results suggest that P-MCMC approaches will be too computationally demanding for this task, and motivate our development of a new zig-zag sampler in \Cref{sec:PDMP}.
We end the paper with \Cref{sec:examples}, where we apply the new sampler to three examples from the literature and show its clear dominance over P-MCMC methods for inference with Gibbs measures constructed using estimated losses\footnote{Code to reproduce our experiments is available in the supplementary material and at 
\url{https://github.com/jejewson/LossEstimationGibbsMeasure}.}.

\section{Motivation and Setup}\label{sec:setup}

Throughout, we consider the finite-dimensional data set $\y=(y_1,\dots,y_n)^\top$ with $y_i\in\mathcal{Y}$ for all $i=1,\dots,n$, and where we assume $y_{1:n} \sim P_0$.
While the true data-generating measure $P_0$ will generally depend on $n$, we suppress this for notational convenience.
Using our data, we model $P_0$ through the class of parametric models 
$
\{P_\theta : \theta \in \Theta\} \subset
\mathcal{P}(\mathcal{Y}^n)\text{ for }\Theta\subseteq \mathbb{R}^{d}
$,
where we write $p_{\theta}$ to denote the  density of $P_{\theta}$, and  once again suppress dependence of $P_\theta$ and $p_{\theta}$  on $n$.
In a Bayesian approach to statistical modelling, we express our prior beliefs about the true state of the world probabilistically through the prior with density $\pi$. 
Given $p_{\theta}$ and $\pi$, the Bayesian approach for quantifying uncertainty about the value of $\theta$ that best describes the observed data proceeds via Bayes' Rule, which yields the Bayes posterior measure with density
\begin{IEEEeqnarray}{rCl}
\pi(\theta\mid \y)=\frac{p_\theta(\y)\pi(\theta)}{\int_\Theta p_\theta(\y)\pi(\theta)\dt\theta}.
\label{eq:bayes-posterior}
\end{IEEEeqnarray}
Conceptually, the Bayes posterior updates our prior beliefs $\pi$ about a parameter $\theta$ with data, and is internally coherent. 

\subsection{New Posterior Belief Distributions}

It is easy to generalise the Bayes posterior: letting $\omega>0$ be a scalar and $\MD_n:\Theta \times \mathcal{Y}^n \to \mathbb{R}$ a loss function expressing the information about the parameter $\theta$ contained in $\y$, then so long as $\int_\Theta \exp\{-\omega\cdot \MD_n(\theta)\}\pi(\theta)\dt\theta<\infty$, we can define the Gibbs measure
\begin{IEEEeqnarray}{rCl}
\pi(\theta \mid \MD_n) := \frac{\exp\{-\omega\cdot \MD_n(\theta)\}\pi(\theta)}{\int_\Theta \exp\{-\omega\cdot \MD_n(\theta)\}\pi(\theta)\dt\theta},
\label{eq:gibbs-msr}
\end{IEEEeqnarray}
which recovers the Bayes posterior $\pi(\theta\mid\y)$ when $\MD_n(\theta) = - n^{-1}\log p_{\theta}(\y)$ and $\omega=n$.
Gibbs measures make appearances in many areas of Bayesian methodology, including ABC  \citep{beaumont2010approximate, frazier2018asymptotic, frazier2020robust}, Probably Approximately Correct (PAC) Bayes \citep{germain2016pac, alquier2024user},  and generalised Bayesian methods \citep{bissiri:etal:2016, knoblauch2019generalized}. 

While some Gibbs measures are straightforward to compute, others expose  computational hurdles whose impact has  been left unaddressed in existing contributions  \citep[see e.g.][]{hooker2014bayesian, ghosh2016robust,cherief2020mmd, miller2021asymptotic, pacchiardi2024generalized}.
In particular, many methods in the literature rely on losses $\MD_n$ that are \textit{intractable}, and  have to be estimated through a proxy $\MD_{m,n}$ that relies on $m$ samples $u_{1:m}$ with $u_j \sim P_{\theta}$ for $j=1,2,\dots m$. 
Though a proxy of this form is an explicit function of the simulated data $u_{1:m}$ so that $\MD_{m,n}(\theta) = \MD_{m,n}(\theta, u_{1:m})$, we usually suppress this dependence in the remainder for notational convenience.

\subsection{Motivating Examples}
\label{subsec:applicability-of-results}
To highlight how wide-spread the use of such proxies is in practice, we introduce two running examples: the \textit{MMD-Bayes} posterior introduced by \citet{cherief2020mmd}, and a pseudo-posterior based on $\beta$-divergences first suggested by \citet{ghosh2016robust}.
Both are popular proposals for producing robust quasi-Bayesian inferences, and require the computation of a Gibbs measure based on a loss that generally needs to be estimated via simulation from a posited model $p_{\theta}$.

\subsubsection{Gibbs measures based on $\beta$-divergences}
\label{sec:beta-div-definition}

Suggested for parameter estimation by \citet{basu1998robust}, the $\beta$-divergence is also often called the \textit{density power divergence}.
As the second name suggests, it is a discrepancy that raises densities to a power $\beta>0$.
%
%
For a given $\beta$, this divergence is written as
\begin{IEEEeqnarray}{rCl}
\dt_\beta(p_0,p_{\theta})=\int\left\{p_{\theta}^{1+\beta}(y)-\left(1+\frac{1}{\beta}\right) p_0(y) p_{\theta}(y)^\beta+\frac{1}{\beta} p_0^{1+\beta}(y)\right\} \dt y.
\label{eq:DPD}
\end{IEEEeqnarray}
Assuming that the true data-generating mechanism $P_0$ admits a density $p_0$ and that $y_i \overset{\text{iid}}{\sim} p_0$, if we wish to minimise $\dt_\beta(p_0,p_{\theta})$ over $\theta$, we can ignore the last term and obtain the loss 
\begin{IEEEeqnarray}{rCl}
\MD^{\beta}_{n}(\theta)= \int p_{\theta}^{1+\beta}(u)\dt u
-\left(1+\frac{1}{\beta}\right)\frac{1}{n} \sum_{i=1}^n p_{\theta}(y_i)^\beta.
\nonumber
\end{IEEEeqnarray}
Minimising $\MD_n^{\beta}(
\theta)$ yields an estimator for $\theta$ that converges to the true data-generating parameter as $n\to\infty$ if the model is well-specified, and which exhibits robustness  
under model misspecification \citep{basu1998robust, jewson2024stability}.
Here, $\beta$ determines the degree of robustness: $\MD_n^{\beta}$ is more robust as $\beta \to \infty$, but recovers the non-robust negative log likelihood $-\log p_{\theta}(y_{1:n})$ as $\beta\to 0$.
%
%
This means that Gibbs measures based on $\MD_n^{\beta}$ can continuously interpolate between a family of robust posteriors indexed by 
$\beta$ and the orthodox Bayes update in
\eqref{eq:bayes-posterior}, making them 
ideally suited for robust Bayes-like inferences.
%
However, outside a small subset of exponential families, the integral $\int p_{\theta}^{1+\beta}(u)\dt u$ has no analytically available form.
As a result, to construct Gibbs measures of this form in practice, one generally has to draw samples $u_j \overset{\text{iid}}{\sim} p_{\theta}$ for $j=1,2,\dots m$, and then use the approximation
\begin{IEEEeqnarray}{rCl}
{\MD}^{\beta}_{m,n}(\theta)=  
\frac{1}{m}\sum_{j=1}^mp_{\theta}(u_j)^{\beta}
-\left(1+\frac{1}{\beta}\right) \frac{1}{n}\sum_{i=1}^n p_{\theta}(y_i)^\beta,
\label{eq:loss-estimated-beta-div}
\end{IEEEeqnarray}
meaning that the de-facto target posterior depends on ${\MD}^{\beta}_{m,n}$ rather than $\MD^\beta_n$.

\subsubsection{MMD-Bayes}
\label{sec:mmd-introduction}

Maximum mean discrepancy (MMD) is a popular distance for testing and parameter estimation based on a kernel  \citep[see e.g.][]{gretton2012kernel, alquier2023estimation, alquier2024universal}, 
and part of the larger family of integral probability metrics that contains certain Wasserstein distances and the total variation norm
\citep[see][]{muller1997integral, sriperumbudur2012empirical}. 
There are generally two types of advantages in using the MMD for constructing generalised Bayes posteriors.
The first are strong robustness guarantees, which hold whenever  the kernel is bounded \citep[see e.g.][]{briol2019statistical, cherief2020mmd}.
The second relates to its applicability to simulation-based models:
evaluating the MMD does \textit{not} require an analytically tractable likelihood function. 
Instead, one only needs access to a generative model for $P_{\theta}$ from samples can be drawn \citep[see][]{park2016k2, pacchiardi2024generalized, dellaporta2022robust}.

%
For a positive-definite and symmetric kernel function $k:\mathcal{Y}\times\mathcal{Y} \to \mathbb{R}_{\geq 0}$, 
the squared MMD between the empirical measure $P_n = \frac{1}{n}\sum_{i=1}^n\delta_{y_i}$ and $P_{\theta}$ is given by
\begin{IEEEeqnarray}{rCl}
\operatorname{MMD}_k(P_n, P_{\theta})^2
& = &
\frac{1}{n^2}\sum_{i=1}^n\sum_{i' = 1}^n
k(y_i, y_{i'})
-
2\frac{1}{n}\sum_{i=1}^n\mathbb{E}_{U\sim  P_{\theta}}\left[ k(U, y_i) \right]
+
\mathbb{E}_{U \sim P_{\theta}, U' \sim P_{\theta}}\left[ 
k(U, U')
\right].
\nonumber
\end{IEEEeqnarray}
As the first term does not depend on $\theta$, the natural loss to construct from the above is  
\begin{IEEEeqnarray}{rCl}
\MD^k_{n}(\theta)
& = &
-
2\frac{1}{n}\sum_{i=1}^n\mathbb{E}_{U\sim  P_{\theta}}\left[ k(U, y_i) \right]
+
\mathbb{E}_{U \sim P_{\theta}, U' \sim P_{\theta}}\left[ 
k(U, U')
\right],
\nonumber
\end{IEEEeqnarray}
which is also often referred to as the \textit{kernel score} \citep[e.g.][]{szekely2005new, gneiting2007strictly}.
Notably, $\MD_n^{k}$ relies on two intractable expectations that have to be approximated by taking $u_j \overset{\text{iid}}{\sim} P_{\theta}$ for $j=1,2,\dots m$, yielding the estimated loss
\begin{IEEEeqnarray}{rCl}
\MD^k_{m,n}(\theta)
& = &
-
2\frac{1}{nm}\sum_{i=1}^n\sum_{j=1}^m  k(u_j, y_i) 
+
\frac{1}{m^2}\sum_{j=1}^m\sum_{j'=1}^m
k(u_j, u_{j'}),
\label{eq:mmd-loss-mn}
\end{IEEEeqnarray}
which is the form of the loss that MMD-Bayes posteriors are computed with in practice.
Notably, this {biased} estimator for $\MD_n$ is the preferred default choice over unbiased versions \citep[e.g.][]{gretton2012kernel} because it guarantees that $\MD_{m,n}$ is non-negative. While \cite{legramanti2022concentration} have studied the behaivor of ABC inference using $\MD^k_{m,n}(\theta)$, herein, we study the beahvior of posteriors based on using $\MD^k_{m,n}(\theta)$ directly within a standard MCMC scheme.

\subsection{Research Gap}{\label{Sec:research_gap}}

Using $\MD_{m,n}$ instead of $\MD_n$ to construct a posterior implies that we do \textit{not} actually target $\pi(\theta\mid \MD_n)$.
While this procedure is a biased estimator for the de-facto target $\pi(\theta\mid\MD_n)$, it is unbiased for the \textit{implicit} posterior target
\begin{IEEEeqnarray}{rCl}
\overline\pi(\theta \mid \MD_{m,n})  \; &  \propto & \; \pi(\theta) \cdot 
\E_{u_{1:m} \overset{\text{iid}}{\sim} P_{\theta}}\left[
\exp \{-\omega \cdot \MD_{m,n}(\theta)\}\right], 
\nonumber
\end{IEEEeqnarray}
a quantity that distinguishes itself from the de-facto target $\pi(\theta\mid\MD_n)$ by explicitly depending on the choice of $m$ via $\E_{u_{1:m} \overset{\text{iid}}{\sim} P_{\theta}}\left[ \exp \{-\omega \cdot \MD_{m,n}(\theta)\} \right]$.
Thus, the expectation is crucial in defining the implicit target: it reflects that using sampling methods like MCMC to approximately sample from Gibbs measures $\pi(\theta \mid \MD_n)$ would recompute $\MD_{m,n}$ based on a \textit{new} sample $u_{1:m} \overset{\text{iid}}{\sim} P_{\theta}$ at each iteration of the sampling algorithm---rather than re-using a \textit{fixed} sample at each iteration.
As is well-understood in the literature on pseudo-marginal MCMC (P-MCMC),
the de-facto target $\overline\pi(\theta \mid \MD_{m,n})$ resulting from this  procedure is biased  for $\pi(\theta \mid \MD_n)$, even if the loss estimator itself is unbiased, i.e., $\E_{u_{1:m} \overset{\text{iid}}{\sim} P_{\theta}}\left[  \MD_{m,n}(\theta) \right] = \MD_n(\theta)$.
In particular, when $\MD_{m,n}(\theta) = \MD_{m,n}(\theta, u_{1:m})$ is non-linear in $u_{1:m}$, Jensen's Inequality implies 
\begin{IEEEeqnarray}{rCl}
\E_{u_{1:m} \overset{\text{iid}}{\sim} P_{\theta}}\left[
\exp\{-\omega \MD_{m,n}(\theta)\}
\right]
& \neq &
\exp\left\{-\omega \cdot \E_{u_{1:m} \overset{\text{iid}}{\sim} P_{\theta}}\left[ \MD_{m,n}(\theta) 
\right]\right\}.
\nonumber
\end{IEEEeqnarray}
This inequality raises two questions that define the research gap addressed in this paper:
\begin{itemize}
    \myitem{Q.1}
    What is the influence of $m$ on the difference between $\overline\pi_{}(\t \mid \MD_{m,n})$ and $\pi(\t \mid \MD_n)$? 
    More specifically, how large should $m$ be chosen as a function of $n$ to ensure that this difference is small?
    And can we obtain the rate at which this difference decays?
     \label{item:Q1}
    \myitem{Q.2} Given the impact of $m$ on the approximation quality of $\overline\pi_{}(\t \mid \MD_{m,n})$, how should we design inference algorithms for Gibbs measures with estimated losses?
    {Specifically, are there other algorithms for drawing samples from such Gibbs measures that exhibit better behavior?}
    \label{item:Q2}
\end{itemize}

Neither of these two questions has thus far been studied in any generality.
While some aspects of \ref{item:Q1} have been considered previously by \citet{alquier2016noisy}, \citet{kaji2023metropolis} and \citet{pacchiardi2024generalized}, they provide no full characterisation and are limited to specific settings.
In particular, {they provide no meaningful results on the precise impact} that $m$ has on posterior approximation quality for general estimated losses.

The current paper rectifies this, and  \Cref{sec:theory-new}  provides  comprehensive answers for \ref{item:Q1}  by studying the asymptotic behaviour of $\overline\pi_{}(\t \mid \MD_{m,n})$, both  in $m$ and in $n$.
These results are the first of their kind, and also  essential in answering \ref{item:Q2}.
More importantly, they allow us to understand exactly how large $m$ should be to ensure that P-MCMC algorithms deliver reliable approximations of $\pi(\theta\mid\MD_{n})$. 
The implications  are stark: generally, we find that for P-MCMC to produce reliable inferences, $m$ would have to be chosen infeasibly large.
This in turn inspires us to explore alternatives for approximating Gibbs measures with estimated losses that do not suffer from this problem, and leads us to develop a scalable  approach based on the PDMP introduced by \citet{bierkens2019zig} in \Cref{sec:PDMP}.

\section{Pseudo-marginal MCMC with estimated losses}
\label{sec:theory-new}

For brevity, we will present a single result to clarify the rate at which $m$ has to grow as a function of $n$ for $\overline{\pi}(\theta\mid \MD_{m,n})$ to deliver reliable inference. 
Thus, the number of simulated samples $m$ is treated as a function $m = m(n)$, though we suppress this for notational simplicity. 
In addition, we do not attempt to maintain the weakest conditions possible, and instead choose easily interpretable technical conditions that are sufficient for the examples we study, and additionally serve a pedagogical function.

\subsection{Maintained assumptions}

Throughout, for a given sequences of random variables $\{X_n\}_{n\in\mathbb{N}}$ and scalars $\{a_n\}_{n\in\mathbb{N}}$, we will take $X_n \lesssim a_n$ to mean that the sequence $\{X_n/a_n\}_{n\in\mathbb{N}}$ is  stochastically bounded by a constant $C >0$ almost surely, so that $\mathbb{P}\left( \lim_{n\to\infty} X_n/a_n \leq C \right) = 1$. 
Similarly, we write $X_n\asymp a_n$ if $a_n\lesssim X_n$ and $X_n\lesssim a_n$.
Additionally, $\|x\|$ denotes the usual Euclidean norm for any vector $x \in \mathbb{R}^d$, $\min\{z, y \}$ ($\max\{z, y \}$) for the minimum (maximum) between two scalars $z,y \in \mathbb{R}$, and $\E_0$ the expectation with respect to the data-generating process $P_0^{(n)}$.

We first impose two mild regularity conditions on the limit of $\MD_{n}$ and the prior mass around the minimiser of this limit that are standard for studying posterior concentration of Gibbs measures; we refer to \cite{syring2020gibbs},  \cite{alquier2020concentration}, and \cite{miller2021asymptotic} for related assumptions and discussion. 
\begin{assumption}\label{ass:limit_fun}
There exists $\MD:\Theta\rightarrow\mathbb{R}$ such that $\MD(\theta):=\lim_{n\to\infty}\E_0\left[\MD_{n}(\theta)\right]$, and there exist  $\theta_0\in\Theta$ such that for any $\delta>0$, $\MD(\theta_0)\le \inf_{\theta:\|\theta-\theta_0\|>\delta}\MD(\theta)$. 
\end{assumption}
	
%
\begin{assumption}\label{ass:prior_mass}
Let $\epsilon_n\rightarrow0$ with $\epsilon_n >0$, and $n\epsilon_n\rightarrow\infty$. For $\mathcal{A}_{\epsilon} := \{\theta\in\Theta: |\MD(\theta)-\MD(\theta_0)|\le\epsilon\}$,  the prior satisfies $\int_{\Theta}\mathds{1}\{\theta\in \mathcal{A}_{\epsilon_n}\} \pi(\theta) \dt\theta \ge e^{-n\epsilon_n}$.
\end{assumption}
\noindent
\Cref{ass:limit_fun} ensures that the population loss has a well-behaved minimum $\theta_0$, while \Cref{ass:prior_mass} ensures that the prior places sufficient probability mass around it.

While the previous two assumptions are commonplace for posterior concentration results, the nature of $\MD_{m,n}$ as an estimated loss poses an unusual challenge, and requires two bespoke regularity assumptions on the moments of $\MD_{m,n}$.
\begin{assumption}\label{ass:mean_bias}
There exists a continuous function $\gamma:\Theta\rightarrow\mathbb{R}_+$ and a $\kappa>0$ such that, for all $\theta\in\Theta$, 
$
\E_0\E_{\U\sim P_\theta}\left[\MD_{m,n}(\theta)\right]\lesssim\MD(\theta)+\gamma(\theta)/m^\kappa.
$	
\end{assumption}

\begin{assumption}\label{ass:tails-new}There exists $g: 
\mathbb{N} \times \mathbb{N} \times \mathcal{J} \to \mathbb{R}_{+}$ with $\mathcal{J} \subset \mathbb{R}_{+}$ so that for 
any $\lambda\in \mathcal{J}$,
\begin{IEEEeqnarray}{rCl}
    \int_\Theta\E_0\E_{\U \sim P_{\theta}} \left[e^{\lambda\left[\MD(\theta)-\MD(\theta_0)\right]-\lambda\left[\MD_{m,n}(\theta)-\MD_{n}(\theta_0)\right]}\right]\pi(\theta)\dt\theta\le\int_\Theta e^{g(m,n,\lambda)\left[\MD(\theta)-\MD(\theta_0)\right]} \pi(\theta)\dt\theta,	
    \nonumber
\end{IEEEeqnarray}
so that
$
g(m,n, \lambda)\asymp \frac{\lambda^2}{\min\{m^\kappa, n\}}$, and $\frac{\lambda-g(m,n,\lambda)}{\lambda}<1/K$,  for some $K>0$. 
\end{assumption}

\Cref{ass:mean_bias} allows  $\MD_{m,n}(\theta)$ to be a biased estimator for the infeasible loss we would hope to target, $\MD_n(\theta)$, as long as the bias decays to zero as $m\rightarrow\infty$.
As we show in \Cref{sec:application-of-theory-I-to-examples}, this is generally easy to establish, and  holds for both proxy losses introduced in \Cref{subsec:applicability-of-results}.
Assumption \ref{ass:tails-new} is a modified version of conditions used in PAC-Bayes inference and will allow us to ensure that the tails of the posterior $\overline{\pi}(\theta\mid\MD_n)$ decay sufficiently fast so that the posterior can concentrate onto the loss minimizer.
Though establishing it can sometimes be more challenging to verify, \Cref{ass:tails-new} is related to two well-understood conditions.

In particular, \Cref{ass:tails-new} is a  version of the Bernstein condition common in PAC-Bayesian analysis (see, in particular, Definition 2.4 of \citealp{alquier2016properties} and the arguments in \citealp{alquier2020concentration} for specific examples) applied to estimated losses $\MD_{m,n}$ rather than to $\MD_n$.
Bernstein conditions are common in the asymptotic analysis of generalised posteriors based on losses $\MD_n$ (see for instance \citealp{syring2020gibbs}), and are satisfied whenever the loss can be written as the sum of sub-exponential random variables, if it is bounded, or if it is convex and Lipschitz (see Section 4.2 of \citealp{alquier2024user} for a discussion). 

Though  related to the Bernstein condition encountered in PAC-Bayes, Assumption \ref{ass:tails-new} is notably different from existing versions in the literature as it integrates over \textit{both} the observed  and simulated data. 
Because the joint integration makes the condition somewhat difficult to interpret, it is instructive to consider conditions that are well-understood, easier to interpret, and which jointly imply Assumption \ref{ass:tails-new}.
In particular, a Hoeffding type condition on $\MD_{m,n}$ - in $u_{1:m}$ - combined with a standard Bernstein condition on $\MD_n$ are sufficient for this.
More precisely, if for any $\lambda > 0$, a positive function $\gamma(\theta)$, some $\kappa > 0$, and a function $g(n, \lambda) \asymp \lambda^2 / n$ with $\{\lambda - g(n, \lambda)\}\lambda^{-1} < 1/K$ for some $K>0$,
\begin{IEEEeqnarray}{rCl}
    \E_0\E_{\U \sim P_{\theta}}\left[e^{-\lambda\{\MD_{m,n}(\theta,\U)-\MD_{n}(\theta)\}}\right]
    & \le & e^{\lambda^2\gamma(\theta)/m^{\kappa}} \; \text{ and } 
    \nonumber \\
    %
    \E_0\left[e^{\lambda[\MD(\theta)-\MD(\theta_0)]-\lambda\{\MD_{n}(\theta)-\MD_{n}(\theta_0)\}}\right]
     & \le & 
    e^{g(\lambda,n)[\MD(\theta)-\MD(\theta_0)]},
    \nonumber
\end{IEEEeqnarray}
then we recover \Cref{ass:tails-new} if there exists $C>0$ so that $\gamma(\theta)\le C[\MD(\theta)-\MD(\theta_0)]$.

\subsection{Verified Assumptions on two examples}
\label{sec:application-of-theory-I-to-examples}

To show the applicability of the maintained assumptions, we verify them on the two running examples of \Cref{subsec:applicability-of-results} before stating our general results.
Both have appeared several times in the literature on generalised Bayesian methods \citep[see e.g.][]{ghosh2016robust, knoblauch2018doubly, jewson2018principles, 
futami2018variational,
cherief2020mmd, pacchiardi2024generalized}, and  several other common proxy losses in the literature are also  covered by our assumptions. 
Yet, they certainly do not cover \textit{all} settings of  interest.
For example, \cite{kaji2023metropolis} derive  posteriors based on estimated likelihoods constructed via a generative adversarial network \citep{goodfellow2014generative}.
As the finite-sample  tail and moment properties of generative adversarial networks are not well understood, Assumptions \ref{ass:mean_bias} and \ref{ass:tails-new} generally cannot be established for such proxy losses.

\begin{example}[$\beta$-divergence] \label{ex:beta_div}\normalfont
For the loss in \eqref{eq:loss-estimated-beta-div}, Assumption \ref{ass:mean_bias} holds for $\gamma(\theta) = 0$ since by assumption, in the context of this loss $\y$ is sampled independently from $P_0$ so that 
$$
\E_0\E_{u_{1:m} \sim p_\theta}\MD_{m,n}^\beta(\theta)=\int p_\theta(u)^{1+\beta}\dt u-\left(1+\frac{1}{\beta}\right)\E_0[p_\theta(Y)^\beta]=\MD(\theta).
$$
To verify Assumption \ref{ass:tails-new}, we impose $\sup_{\theta\in\Theta,u\in\mathcal{Y}}p_\theta(u)\le 1$, a condition that is satisfied for all models of discrete data.
From this, we can now write $\MD_{m,n}^\beta(\theta)$ as the sum of $n+m$ random variables given by $\{n^{-1}(1+\beta^{-1})p_{\theta}(y_i)^{\beta}\}_{i=1}^n$ with $y_i \stackrel{iid}{\sim} P_0$ and $\{m^{-1}p_{\theta}(U_j)^{\beta}\}_{j=1}^m$ with $U_j \stackrel{iid}{\sim} P_{\theta}$.
Since both sets of random variables are independent and bounded on $[0,1+\beta^{-1}]$, we conclude that $\MD_{m,n}^\beta(\theta)$ is the sum of independent sub-Gaussian random variables, and thus  satisfies Assumption \ref{ass:tails-new} with $g(m,n,\lambda)\asymp \lambda^2/m+\lambda^2/n$.
\end{example}

\begin{example}[MMD]\label{ex:mmd}\normalfont
 For the loss in \eqref{eq:mmd-loss-mn}, assume that $
 \sup_{u,u'}k(u,u') \leq \mathsf{K}$, which is a standard condition satisfied for most popular kernels used for constructing MMDs, including the Gaussian, Mat\'ern, and Inverse Multiquadratic kernels.
 Given the bound, we have
 \begin{IEEEeqnarray}{rCl}
     \E_0\E_{\U\sim P_\theta}\MD_{m,n}^k(\theta)&=&\E_{U\sim P_\theta,U'\sim P_\theta}[k(U,U')]-2\E_{Y\sim P_0,U\sim P_\theta}[k(U,Y)]+\frac{1}{m}\E_{U\sim P_\theta}[k(U,U)]
     \nonumber
     \\
     &=&\MD(\theta)+\frac{1}{m}\E_{U\sim P_\theta}[k(U,U)]
     \leq 
     \MD(\theta)+ 
     \mathsf{K}/m,
     \nonumber
 \end{IEEEeqnarray}
 so that Assumption \ref{ass:mean_bias} holds for $\gamma(\theta) = \mathsf{K}$ and $\kappa = 1$.
For Assumption \ref{ass:tails-new}, due to the bounded kernel, $\MD_{m,n}^k(\theta)$ can be written as the sum of bounded random variables whenever $y_i\stackrel{iid}{\sim}P_0$.
As a result, $\MD_{m,n}^k(\theta)$ is the sum of sub-Gaussian random variables, and will satisfy Assumption \ref{ass:tails-new} with $g(\lambda,m,n)\asymp \lambda^2/m+\lambda^2/n$. 

\end{example}

Both for proxy losses based on estimated $\beta$-divergences and MMDs, our examples show that their tail behaviour can be characterised by $g(\lambda,m,n)\asymp \lambda^2/m+\lambda^2/n$. 
This teases a result we shall establish more formally in the next section: for $\overline{\pi}(\theta\mid\MD_{m,n})$ to achieve the same posterior contraction rate as $\pi(\theta\mid\MD_n)$, one needs to choose $m=m(n) \asymp n$.
For the case of the MMD, these results regarding $m$ are exactly the same ones as those previously derived in the setting of point estimators \citep[see][]{briol2019statistical}.
To the best of our knowledge, this stands in contrast to the  $\beta$-divergence: while the proxy loss we study was used in previous work \citep[see e.g.][]{futami2018variational}, the effect of $m$ on said proxy loss---or the Gibbs posterior this proxy induces---has thus far not been studied.

\subsection{Main Result}
Our main result characterises the rate of posterior concentration for $\overline\pi(\theta\mid\MD_{m,n})$. 
{It shows that to offset the estimation error introduced via estimating the loss using $m$ datasets}
when one uses $\overline\pi(\theta\mid\MD_{m,n})$ as an approximation to $\pi(\theta\mid\MD_n)$, the $m=m(n)$  simulations required from the model at each iteration of any P-MCMC-type algorithm must diverge like $n^{1/\kappa}$. 

\begin{restatable}{theorem}{thmnew}\label{thm:new}
Suppose Assumptions \ref{ass:limit_fun}-\ref{ass:tails-new} hold and $m\asymp n^{1/\kappa}$. 
Then, for $\epsilon_n\rightarrow0$ with $n\epsilon_n\rightarrow\infty$, $M_n>0$ large enough, possibly $M_n\rightarrow\infty$ as $n\rightarrow\infty$, and for $\overline\Pi(A\mid\MD_{m,n}):=\int_{A}\overline{\pi}(\theta\mid\MD_{m,n})\dt\theta$ with $A\subseteq\Theta$, 
$
\E_0 \overline{\Pi}\left[|\MD(\theta)-\MD(\theta_0)|\ge M_n\epsilon_n\mid\MD_{m,n}\right]\lesssim 1/M_n.
$ 
\end{restatable}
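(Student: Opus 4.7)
To set up, define $d(\theta) := \MD(\theta) - \MD(\theta_0)$, which is nonnegative by \Cref{ass:limit_fun}, so that $\{|\MD(\theta)-\MD(\theta_0)|\ge M_n\epsilon_n\}$ coincides with $B_n := \{\theta \in \Theta : d(\theta) \geq M_n \epsilon_n\}$. Let $\tilde{Z}_\theta := \E_{u_{1:m} \sim P_\theta}\exp\{-\omega(\MD_{m,n}(\theta) - \MD_n(\theta_0))\}$, so the implicit posterior admits the ratio representation
\begin{IEEEeqnarray}{rCl}
\overline{\Pi}(B_n \mid \MD_{m,n}) & = & \frac{N_n}{D_n}, \quad N_n := \int_{B_n} \pi(\theta)\, \tilde{Z}_\theta \dt\theta, \quad D_n := \int_\Theta \pi(\theta)\, \tilde{Z}_\theta \dt\theta. \nonumber
\end{IEEEeqnarray}
The overall plan is a Ghosal--van der Vaart-style posterior contraction argument in which the simulation randomness in $u_{1:m}$ is absorbed into exponential tilts.

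For the numerator I would apply \Cref{ass:tails-new} at $\lambda = \omega$. Since $d(\theta) \geq M_n \epsilon_n$ on $B_n$, inserting $e^{-\omega d(\theta)}\cdot e^{\omega d(\theta)}$ and invoking Fubini gives
\begin{IEEEeqnarray}{rCl}
\E_0 N_n & \leq & e^{-\omega M_n \epsilon_n} \int_{\Theta} \pi(\theta)\, \E_0 \E_u \exp\{\omega d(\theta) - \omega[\MD_{m,n}(\theta) - \MD_n(\theta_0)]\} \dt\theta \nonumber \\
& \leq & e^{-\omega M_n \epsilon_n} \int_\Theta e^{g(m,n,\omega) d(\theta)} \pi(\theta) \dt\theta. \nonumber
\end{IEEEeqnarray}
Under $\omega \asymp n$ and the scaling $m \asymp n^{1/\kappa}$, we have $g(m,n,\omega) \asymp n$; the control condition $(\omega-g)/\omega < 1/K$ in \Cref{ass:tails-new} is precisely what keeps the right-hand integral bounded.

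The real work is the lower bound on $D_n$. Restricting the integral to $\mathcal{A}_{\epsilon_n}$ and using Jensen's inequality to pull $\log$ through $\E_u$ yields
\begin{IEEEeqnarray}{rCl}
D_n & \geq & \int_{\mathcal{A}_{\epsilon_n}} \pi(\theta) \exp\{-\omega\, \E_u[\MD_{m,n}(\theta) - \MD_n(\theta_0)]\} \dt\theta. \nonumber
\end{IEEEeqnarray}
On $\mathcal{A}_{\epsilon_n}$ we have $d(\theta) \leq \epsilon_n$, and \Cref{ass:mean_bias} gives $\E_0 \E_u \MD_{m,n}(\theta) \leq \MD(\theta) + \gamma(\theta)/m^\kappa$; under $m\asymp n^{1/\kappa}$ the bias contributes $O(1/n)$, which is dominated by $\epsilon_n$ because $n\epsilon_n \to \infty$. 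Upgrading this mean statement to a uniform-in-$\theta$ concentration bound on $\mathcal{A}_{\epsilon_n}$ --- via a peeling/chaining argument combined with a second use of \Cref{ass:tails-new} to control the centered fluctuations of $\E_u \MD_{m,n}(\theta)$ and $\MD_n(\theta_0)$ around their means --- then produces an event $\mathcal{E}_n$ with $P_0(\mathcal{E}_n^c)\to 0$ on which $D_n \gtrsim \Pi(\mathcal{A}_{\epsilon_n})\, e^{-C \omega \epsilon_n} \gtrsim e^{-(C+1) n \epsilon_n}$ by \Cref{ass:prior_mass}.

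Combining the two bounds through the decomposition $\overline{\Pi}(B_n) \leq (N_n/D_n)\1_{\mathcal{E}_n} + \1_{\mathcal{E}_n^c}$ and the trivial bound $\overline{\Pi}(B_n)\leq 1$ gives
\begin{IEEEeqnarray}{rCl}
\E_0 \overline{\Pi}(B_n \mid \MD_{m,n}) & \lesssim & e^{-(M_n - C - 1) n \epsilon_n} + o(1), \nonumber
\end{IEEEeqnarray}
which is $\lesssim 1/M_n$ once $M_n$ is large enough, since $n\epsilon_n \to \infty$. The principal obstacle is the uniform-in-$\theta$ lower bound on $D_n$: \Cref{ass:mean_bias} supplies only a marginal-in-$\theta$ expectation statement, and turning it into a uniform high-probability bound is exactly what dictates the scaling $m \asymp n^{1/\kappa}$ --- this is the regime in which the per-$\theta$ estimation bias $\gamma(\theta)/m^\kappa$ is no larger than the prior-mass scale $\epsilon_n$, while simultaneously $g(m,n,\omega) \asymp \omega$ so that the numerator estimate of the first step does not blow up.
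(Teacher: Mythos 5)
Your proposal follows the classical Ghosal--van der Vaart numerator/denominator route, whereas the paper proves \Cref{thm:new} by an entirely different PAC-Bayes argument: it uses the Donsker--Varadhan variational formula (Lemma \ref{lem:restate}) to represent the \emph{joint} law $\overline{\pi}(\theta,u\mid\MD_{m,n})$ as a Gibbs measure on $\Theta\times\mathcal{U}$, derives the first-moment bound $\E_0\int \MD(\theta,\theta_0)\,\overline{\pi}(\theta\mid\MD_{m,n})\dt\theta\lesssim \epsilon_n+m^{-\kappa}$ by testing against the prior restricted to $\mathcal{A}_{\epsilon_n}$, and then concludes by Markov's inequality --- which is precisely why the theorem's conclusion is the polynomial rate $1/M_n$ rather than anything exponential. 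More importantly, your route has two genuine gaps under the stated assumptions. First, in the numerator bound you arrive at $\E_0 N_n\le e^{-\omega M_n\epsilon_n}\int_\Theta e^{g(m,n,\omega)\,d(\theta)}\pi(\theta)\dt\theta$ and assert that the condition $(\omega-g)/\omega<1/K$ keeps this integral bounded. It does not: that condition forces $g(m,n,\omega)$ to be of the same order as $\omega\asymp n$, so the integral is $\int_\Theta e^{\Theta(n)\,d(\theta)}\pi(\theta)\dt\theta$, which generically diverges exponentially in $n$ (even for bounded losses such as the MMD it is of order $e^{Cn}$, which swamps the prefactor $e^{-\omega M_n\epsilon_n}=e^{-M_n n\epsilon_n}$ unless $M_n\gtrsim 1/\epsilon_n$, a far weaker conclusion than claimed). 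Assumption \ref{ass:tails-new} is deliberately an \emph{integrated} inequality with the tilt $e^{\lambda d(\theta)}$ already inside; it is designed to be fed into the variational formula, not to be peeled apart into a per-$\theta$ testing bound.

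Second, your denominator bound requires a uniform-in-$\theta$, high-probability (in $y_{1:n}$) control of $\E_{u_{1:m}}\MD_{m,n}(\theta)-\MD_n(\theta_0)$ over $\mathcal{A}_{\epsilon_n}$ to construct the event $\mathcal{E}_n$. Assumption \ref{ass:mean_bias} only controls the double expectation $\E_0\E_{\U\sim P_\theta}[\MD_{m,n}(\theta)]$ pointwise in $\theta$, and Assumption \ref{ass:tails-new} is again an integrated exponential-moment bound; neither supplies the uniform concentration you invoke, and the ``peeling/chaining argument'' is asserted rather than carried out --- it would require additional hypotheses (e.g.\ entropy or Lipschitz conditions on $\theta\mapsto\MD_{m,n}(\theta)$) that the paper does not impose. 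The paper sidesteps this entirely: because it bounds a posterior \emph{expectation} rather than a ratio of integrals, it only ever needs $\E_0\int[\MD_{m,n}(\theta,u)-\MD_n(\theta_0)]\dt\rho_n(\theta,u)$ for the fixed test measure $\rho_n$ supported on $\mathcal{A}_{\epsilon_n}$, which is exactly what Assumptions \ref{ass:prior_mass} and \ref{ass:mean_bias} deliver via Fubini. To repair your argument you would either need to strengthen the assumptions, or switch to the variational/first-moment strategy the paper actually uses.
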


The proof of Theorem \ref{thm:new} relies on tools for the analysis of Gibbs measures common in PAC-Bayesian analysis, see \citet{alquier2020concentration} for an example. This is noteworthy since $\overline{\pi}(\theta\mid \MD_{m,n})$ itself is \textit{not directly representable as a Gibbs measure.} 
In the proof,  this conundrum is solved by leveraging the fact that P-MCMC draws samples from the joint distribution
\begin{IEEEeqnarray}{rCl}
\overline{\pi}(\theta,u_{1:m}\mid\MD_{m,n})
& = & \frac{\pi(\theta)p_\theta(u_{1:m})\exp\{-\omega\cdot \MD_{m,n}(\theta,u_{1:m})\}}{\int\int\pi(\theta)p_\theta(u_{1:m})\exp\{-\omega\cdot \MD_{m,n}(\theta,u_{1:m})\}\dt\theta\dt u_{1:m}},
\nonumber
\end{IEEEeqnarray}
which in turn \textit{is representable as a Gibbs measure} and solves a generalised variational problem (see Lemma \ref{lem:restate} in the supplement).
In the proof, PAC-Bayes arguments are applied to the joint distribution $\overline{\pi}(\theta,u_{1:m}\mid\MD_{m,n})$ above, and are then transferred to $\overline{\pi}(\theta\mid\MD_{m,n})$ via marginalisation over $u_{1:m}$.
To the best of our knowledge, this proof technique is novel, and we believe that it may prove useful for future work on generalising Bayesian techniques---including for predictive approaches and hierarchical Bayesian methods. 


\subsection{Implications}
\label{sec:implications-complexity-analysis}

While the technical ideas developed to prove Theorem \ref{thm:new} may spark optimism, the result itself makes for rather grim reading 
regarding the usefulness of $\overline\pi(\theta\mid\MD_{m,n})$ for inference:
to achieve the standard rate of posterior concentration, this result proves that we must choose $m$ as a rapidly increasing function of $n$.
For example, both MMD-Bayes and posteriors based on the $\beta$-divergence correspond to the case of $\kappa = 1$, in which case obtaining the usual posterior concentration rate requires that we draw $m \asymp n$ samples from the model $P_{\theta}$ \textit{at each iteration} of a P-MCMC algorithm.
Clearly, this imposes a substantive computational burden.
In fact, using Landau notation, running such a sampling algorithm for $\mathcal{I}$ iterations would  lead to a computational overhead of $\mathcal{O}(\mathcal{I} \cdot n^2)$.
On top of scaling quadratically in the number of observations, we also find that empirically, such P-MCMC methods can mix poorly. 
As a result, $\mathcal{I}$ has to be  very large to obtain  high-fidelity approximations of $\pi(\theta\mid\MD_n)$, an observation strongly supported by our experiments in \Cref{sec:examples}.
In conclusion then, using estimated losses $\MD_{m,n}$ within P-MCMC will generally be too computationally prohibitive to be practical.

\section{Zig-zag sampling with estimated losses}
\label{sec:PDMP}

To provide a solution for the challenge outlined in the previous section, we  advocate for the use of a modified version of the zig-zag sampler proposed by \citet{bierkens2019zig} to 
\textit{directly} draw samples from $\pi(\theta\mid\MD_n)$.
The only requirements for this are that (i) a mild unbiasendess condition on the gradients of $\MD_{m,n}$ can be verified and (ii) a suitable computational upper bound on the switching rates can be found.
Unlike for P-MCMC algorithms, drawing  samples directly means that there is no distinction between implicit and de-facto target.
Thus, the number of draws from $P_{\theta}$  {neither} influences the concentration of the target, {nor} does it have to increase as a function of $n$ like $m=m(n)$ did.
To highlight these important differences, we will use $b$ instead of $m$ to denote the number of draws from $P_{\theta}$ within each iteration of our proposed zig-zag sampler.
With this, one can show this algorithm's  computational complexity for $\mathcal{I}$ iterations to be linear in $n$ and of order $\mathcal{O}(\mathcal{I}\cdot b\cdot n)$.
Since $b\leq 5$ is usually sufficient for excellent performance,  this sampling strategy will often be substantively faster than P-MCMC.
While we defer this discussion to Section \ref{sec:subsampling}, the procedure can also be further sped up through data sub-sampling.

\subsection{The zig-zag sampler}

While Markov chain Monte Carlo (MCMC) algorithms follow a discrete time reversible stochastic process, piecewise deterministic Markov processes (PDMPs) \citep{fearnhead2018piecewise} power a family of sampling algorithms based on continuous time stochastic processes with non-reversible deterministic linear dynamics between random stopping times from an inhomogeneous Poisson process \citep[see][for a review]{corbella2022automatic}.
One particularly well-studied member of this class is the zig-zag sampler \citep[see][]{bierkens2019zig}, which we will adapt to the setting studied in this paper.

Provided that $\theta$ is a vector with $d$ components, the zig-zag sampler operates on the extended space $\Theta\times\mathcal{V}$, where $\mathcal{V}=\{-1,1\}^d$ denotes the space of \textit{velocity variables}.
Given $\nu =(\nu_1, \nu_2, \dots \nu_d)^{
\top}$, it is understood that $\nu_j$ indicates the velocity at which the  zig-zag process is  traversing the $j$-th dimension of $\Theta$.
The process is piecewise deterministic,  and velocities are constrained to only change for one component of $\nu$ at \textit{stopping times}  $\{\tau^k\}_{k=0}^{\infty}$.
Given these stopping times, the dynamics for evolving $\theta$ are chosen be linear between consecutive stopping times, which  enables the representation of the full continuous-time process $\{(\theta^{(t)}, \nu^{(t)})\}_{t\geq 0}$  through its so-called \textit{skeleton}  $(\tau^k, \theta^{(\tau^k)}, \nu^{(\tau^k)})_{k=0}^{\infty}$.

To sample these skeleton points, one follows a  recursive procedure:
starting with the previous skeleton point $(\tau^k, \theta^{(\tau^k)}, \nu^{(\tau^k)})$ and
given the potential function $\Psi_n(\theta) = -\log(\pi(\theta)) + \omega\MD_{n}(\theta)$ for which $\pi(\theta\mid\MD_n) \propto \exp\{- \Psi_n(\theta)\}$, one constructs the next stopping time $\tau^{k+1}$ by first sampling $\tau_j$ according to $d$  non-homogenous one-dimensional Poisson processes with rate functions $\lambda_j(\theta^{(\tau^k)} + \nu^{(\tau^k)}\cdot t, \nu^{(\tau^k)})$ for $j=1, 2,\dots, d$ with
%
%
\begin{IEEEeqnarray}{rCl}
    \lambda_j(\theta, \nu) & := & \max\left\{ \nu_j \cdot \frac{\partial}{\partial\theta_j}\Psi_n(\theta),0\right\},
    \nonumber
\end{IEEEeqnarray}
and then obtains 
$\tau^{k+1} = \tau^{k} + \tau_{j^*}$, where $j^* = \argmin_{j}\{ \tau_j \}$.
From this, one can now construct the next skeleton point $(\tau^{k+1}, \theta^{(\tau^{k+1})}, \nu^{(\tau^{k+1})})$
by setting $\nu_{j^*}^{(\tau^{k+1})} = - \nu_{j^*}^{(\tau^{k})}$, $\nu_{j}^{(\tau^{k+1})} =  \nu_{j}^{(\tau^{k})}$ for $j \neq j^*$, and  taking $\theta^{(\tau^{k+1})} = \theta^{(\tau^{k})} + \tau_{j^*} \cdot \nu^{(\tau^k)}$.

While this process is elegant, the candidate stopping times $\tau_j$ can generally \textit{not} be sampled directly from the inhomogenous Poisson processes with rate functions $\lambda_j$.
Instead, one usually relies on \textit{Poisson thinning} \citep{lewis1979simulation} and \textit{Cinlar's algorithm} \citep{cinlar1975introduction}, which requires the existence of \textit{computational bounds} $\Lambda_j$ associated to $\Psi_n(\theta)$ so that 
(i) $\lambda_j(\theta + \nu\cdot t, \nu) \leq \Lambda_j(\theta + \nu\cdot t, \nu)$ for all $t > 0$ and  $(\theta, \nu) \in \Theta \times \mathcal{V}$, and (ii) $\int_0^t \Lambda_j(\theta + \nu\cdot s,\nu)\dt s $ is computationally tractable for Cinlar's algorithm.
%
If both requirements are met, candidate stopping times $\tau_j$ can now be sampled from Poisson processes with rates $\Lambda_j$, and then thinned using Poisson thinning.
In particular, a  new stopping time $\tau^{k+1} = \tau^{k} + \tau_{j^*}$ for $j^* = \argmin_{j}\{ \tau_j \}$  results in a change of velocity only with probability 
$\lambda_{j^*}( \theta^{(\tau^k)} + \nu^{(\tau^k)}\cdot \tau_{j^*}, \nu^{(\tau^k)}) / \Lambda_{j^*}( \theta^{(\tau^k)} + \nu^{(\tau^k)}\cdot \tau_{j^*}, \nu^{(\tau^k)})$.

\subsection{The role of unbiased gradient estimation}

While Poisson thinning ameliorates the computational intractability of a naive implementation for the zig-zag sampler, the Poisson rate function $\lambda_j$ still has to be evaluated pointwise to compute acceptance probabilities.
This would seem to prevent us from using the strategy outlined above: 
while the entire thrust of our paper is the intractability of $\MD_n$,
the rates $\lambda_j$ depend on  the intractable partial derivatives $ \frac{\partial}{\partial\theta_j} \MD_n(\theta)$.

Fortunately, it turns out that we can still rely on the above approach whenever we can estimate the gradient $ \frac{\partial}{\partial\theta} \MD_n(\theta)$ without bias. 
More formally, the key condition is the existence of a vector function $\varphi_{b,n}(\theta)$ depending on a sample $u_{1:b} \sim P_{\theta}$ for which 
\begin{IEEEeqnarray}{rCl}
\E_{u_{1:b}\sim P_\theta}\left[\varphi_{b,n}(\theta)\right]=\frac{\partial }{\partial\theta}\MD_{n}(\theta).
\label{eq:needed}
\end{IEEEeqnarray}
%
With this, we can now write $\Psi_{b,n}'(\theta) := - \frac{\partial}{\partial\theta}\log \pi(\theta) + \omega  \varphi_{b,n}(\theta) $ 
as well as the corresponding estimated rate function  
$\hat{\lambda}_{j}(\theta,\nu) := \max\left\{ \nu_j \cdot \left[\Psi_{b,n}'(\theta)\right]_j, 0 \right\}$.
%
Now, if we can once again find \textit{computational bounds} 
$\hat{\Lambda}_{j}(\theta,\nu)$ 
that are \textit{not} dependent on $u_{1:b}$ so that
\begin{IEEEeqnarray}{rCl}
    \hat{\lambda}_j(\theta + \nu\cdot t, \nu) & \leq & \hat{\Lambda}_j(\theta + \nu\cdot t, \nu) \text{ for all } u_{1:b}, \text{ and all } t > 0, (\theta, \nu) \in \Theta \times \mathcal{V},\label{eq:computational-bound-estimated}
\end{IEEEeqnarray}
and so that $\int_0^t \hat{\Lambda}_j(\theta + \nu\cdot s,\nu)\dt s $ is computationally tractable for Cinlar's algorithm, the zig-zag sampler  in Algorithm \ref{Alg:ZigZag_EstimatedLoss} samples directly from $\pi(\theta\mid\MD_n)$.
This result is stated below, proven in Section \ref{sec:ProofStationaryDist_ZigZag}, and inspired by the proof of Theorem 4.1 in \cite{bierkens2019zig}.
%
\begin{restatable}{theorem}{thmzigzag}
\label{thm:StationaryDist_ZigZag}
Algorithm \ref{Alg:ZigZag_EstimatedLoss} produces a  zig-zag process
    with invariant distribution  $\pi(\theta\mid\MD_n)$.
\end{restatable}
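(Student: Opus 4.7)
The plan is to reduce the analysis to the classical invariance characterisation for zig-zag processes (Theorem 4.1 of \citet{bierkens2019zig}). That result says: a zig-zag process with switching rates $\{\lambda_j\}_{j=1}^d$ admits $\pi(\theta\mid\MD_n)\times \text{Uniform}(\mathcal{V})$ as an invariant distribution provided
\begin{IEEEeqnarray}{rCl}
\lambda_j(\theta,\nu)-\lambda_j(\theta,F_j\nu)\;=\;\nu_j\cdot \frac{\partial}{\partial\theta_j}\Psi_n(\theta),
\nonumber
\end{IEEEeqnarray}
where $F_j$ flips the $j$-th component of $\nu$. So my plan is to identify the \emph{effective} switching rate induced by Algorithm~\ref{Alg:ZigZag_EstimatedLoss}, and check that this identity holds for it.

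First I would argue that although Algorithm~\ref{Alg:ZigZag_EstimatedLoss} samples candidate events from the bound $\hat{\Lambda}_j$ and then accepts with probability $\hat{\lambda}_j/\hat{\Lambda}_j$ using a fresh $u_{1:b}\sim P_\theta$, the net effect on the process is that of a zig-zag with switching intensity equal to the expectation of $\hat{\lambda}_j$ taken over $u_{1:b}$. This is a direct application of Poisson thinning with random acceptance probabilities: if proposals arrive at deterministic rate $\hat{\Lambda}_j(\theta,\nu)$ and are kept independently with probability $\hat{\lambda}_j(\theta,\nu)/\hat{\Lambda}_j(\theta,\nu)$ whose randomness comes from an independent draw $u_{1:b}$, then the kept events form an inhomogeneous Poisson process with intensity
\begin{IEEEeqnarray}{rCl}
\tilde{\lambda}_j(\theta,\nu)\;:=\;\E_{u_{1:b}\sim P_\theta}\bigl[\hat{\lambda}_j(\theta,\nu)\bigr].
\nonumber
\end{IEEEeqnarray}
The computational bound condition \eqref{eq:computational-bound-estimated} ensures that the ratio is always in $[0,1]$, so the thinning step is well defined.

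Next I would verify the invariance condition for $\tilde{\lambda}_j$ using the elementary identity $\max\{x,0\}-\max\{-x,0\}=x$, valid for every $x\in\mathbb{R}$. Writing $G_j:=\nu_j\cdot [\Psi'_{b,n}(\theta)]_j$, we have $\hat{\lambda}_j(\theta,\nu)=\max\{G_j,0\}$ and $\hat{\lambda}_j(\theta,F_j\nu)=\max\{-G_j,0\}$, so that pointwise in $u_{1:b}$,
\begin{IEEEeqnarray}{rCl}
\hat{\lambda}_j(\theta,\nu)-\hat{\lambda}_j(\theta,F_j\nu)\;=\;G_j\;=\;\nu_j\cdot [\Psi'_{b,n}(\theta)]_j.
\nonumber
\end{IEEEeqnarray}
Taking expectations over $u_{1:b}\sim P_\theta$ and invoking the unbiasedness assumption \eqref{eq:needed}, together with the fact that the prior contribution $-\partial_{\theta_j}\log\pi(\theta)$ is non-stochastic, yields
\begin{IEEEeqnarray}{rCl}
\tilde{\lambda}_j(\theta,\nu)-\tilde{\lambda}_j(\theta,F_j\nu)\;=\;\nu_j\cdot\frac{\partial}{\partial\theta_j}\Psi_n(\theta),
\nonumber
\end{IEEEeqnarray}
which is exactly the invariance condition. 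Marginalising out the uniform velocity then gives $\pi(\theta\mid\MD_n)$ as invariant for the $\theta$-marginal, establishing the claim.

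I expect the conceptual core of the argument to be straightforward once the above structure is in place; the main subtlety to get right is the Poisson-thinning step, where the acceptance probability itself depends on a fresh draw from $P_\theta$ rather than being a deterministic function of the state. This requires treating the composite update (propose via $\hat{\Lambda}_j$, simulate $u_{1:b}$, accept with probability $\hat{\lambda}_j/\hat{\Lambda}_j$) as a single Markov kernel on $\Theta\times\mathcal{V}$ whose event rate integrates over the auxiliary randomness. Once that collapse is justified, the remainder is the identity trick and the unbiasedness condition, both of which carry no hidden difficulty.
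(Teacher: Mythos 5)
Your proposal is correct and follows essentially the same route as the paper: compute the effective switching rate $\tilde{\lambda}_j(\theta,\nu)=\E_{u_{1:b}\sim P_\theta}[\hat{\lambda}_j(\theta,\nu)]$ via Poisson thinning, verify the switching-rate identity using $\max\{x,0\}-\max\{-x,0\}=x$ together with the unbiasedness condition \eqref{eq:needed}, and then invoke the invariance characterisation of \citet{bierkens2019zig} (the paper uses their Theorem 2.2 under Assumption 2.1, with the effective-rate argument adapted from their Theorem 4.1). The only cosmetic difference is that the paper splits the two steps into separate lemmas.
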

%

\begingroup
\singlespacing
\begin{algorithm}[H]
\caption{Zig-zag sampler for $\pi(\theta\mid\MD_n)$ using unbiasedness condition \eqref{eq:needed}}\label{Alg:ZigZag_EstimatedLoss}
\KwIn{Initial $(\theta^{(0)},\nu^{(0)})$, $\tau^0 = 0$,  $b \in \mathbb{N}$, $\varphi_{b, n}$ satisfying \eqref{eq:needed},   $\hat{\Lambda}_j$ satisfying \eqref{eq:computational-bound-estimated}} 
\vspace*{0.1cm}
\For{$k=0,1,\ldots$}{
    Sample $\tau_j \sim \mathbb{P}(\tau_j \geq t) = \exp\left\{-\int_0^t\hat{\Lambda}_j(\theta^{(\tau^{k})} + \nu^{(\tau^{k})}\cdot s, \nu^{(\tau^{k})}) ds\right\}$ for $j=1,2,\dots, d$ \\
    Set $j^* = \argmin_{j}\tau_j$; update $\tau^{k+1} = \tau^k + \tau_{j^*}$ and $\theta^{(\tau^{k+1})} = \theta^{(\tau^{k})} + \nu^{(\tau^{k})} \cdot  \tau_{j^*}$\\
    Sample $u_{1:b} \sim P_{\theta^{(\tau^{k+1})}}$; set $\hat{p}_{k+1} =  \dfrac{\hat{\lambda}_{j^*}(\theta^{(\tau^{k})} + \nu^{(\tau^{k})}\cdot \tau_{j^*}, \nu^{(\tau^{k})})}{ \hat{\Lambda}_{j^*}(\theta^{(\tau^{k})} + \nu^{(\tau^{k})}\cdot \tau_{j^*}, \nu^{(\tau^{k})})}$;
    draw 
    $Z_{k+1} \sim \operatorname{Ber}(\hat{p}_{k+1})$
    \\
    \eIf{$Z_{k+1} = 1$}{Set $\nu_{j^*}^{(\tau^{k+1})} = - \nu_{j^*}^{(\tau^{k})}$ and $\nu_{j}^{(\tau^{k+1})} =  \nu_{j}^{(\tau^{k})}$ for $j \neq j^*$}{Set $\nu^{(\tau^{k+1})} = \nu^{(\tau^{k})}$}
}
\Return $(\tau^k, \theta^{(\tau^k)}, \nu^{(\tau^k)})_{k=0}^{\infty}$
\end{algorithm}
\endgroup

\subsection{Relation to previous results}{\label{sec:previous}}

The result in Theorem \ref{thm:StationaryDist_ZigZag} seems to have been part of the folklore surrounding the zig-zag sampler, but has not been formally established previously.
The closest that prior work has come to our proposed idea is in \citet{pacchiardi2024generalized}, which touches upon some of the underlying ideas, but provides no details,  formal derivations, or  results.
Further, while Theorem \ref{thm:StationaryDist_ZigZag} appears superficially related to the idea of using the zig-zag sampler after sub-sampling \citep[see][]{bierkens2019zig}, it is fundamentally different: unlike in the case of sub-sampling, the  unbiasedness requirement in \eqref{eq:needed} does \textit{not} relate to the data-generating distribution, but to additional randomness due to simulating from $P_{\theta}$.
In fact, it is easy to design zig-zag samplers that implement \textit{both} sub-sampling and unbiased estimation as per \eqref{eq:needed}.
To illustrate this, consider Gibbs measures based on additively decomposable loss functions $\MD_n(\theta) = \sum_{i=1}^n \ell^i_n(\theta)$ whose individual terms $\ell^i_n$ only depend on the datum $y_i$ and have to be approximated via proxy losses $\ell^i_{b,n}$ depending on simulated observations $u_{1:b}$ so that $\mathbb{E}_{u_{1:b} \sim P_{\theta}}[\ell^i_{b,n}(\theta)] = \ell^i_{n}(\theta)$.
For this setting, one could construct zig-zag samplers that combine sub-sampling over $\{\ell^i_n\}_{i=1}^n$ with a proxy loss approach using $\ell^i_{b,n}$ to approximate $\ell^i_n$. This is discussed further in Section \ref{sec:subsampling}

\subsection{Unbiased gradient estimators and computational upper bounds}{\label{sec:constructing_grad}}

While Theorem \ref{thm:StationaryDist_ZigZag} shows that a function $\varphi_{b,n}$ satisfying \eqref{eq:needed} is crucial, it does not tell us how to construct such estimators in practice.
Fortunately, this is normally easy. 
In particular, almost all proxy losses $\MD_{b,n}$ of interest are unbiased estimators for $\MD_n$, so that we can write
\begin{IEEEeqnarray}{rCl}
    \MD_n(\theta)
    & = &
    \int \MD_{b,n}(\theta) p_{\theta}(u_{1:b}) \dt u_{1:b}.
    \label{eq:direct-sampling-representation}
\end{IEEEeqnarray}
differentiating on both sides, we use the log-derivative trick and obtain 
\begin{IEEEeqnarray}{rCl}
    \varphi_{b,n}(\theta) & = & \frac{\partial}{\partial\theta} \MD_{b,n}(\theta)+\MD_{b,n}(\theta)\frac{\partial}{\partial\theta} \log p_\theta(u_{1:b})
    \label{eq:candidate-direct}
\end{IEEEeqnarray}
as a natural candidate.
For example, in the case of the $\beta$-divergence, this yields
 \begin{IEEEeqnarray}{rCl}
\varphi_{b, n}^{\beta}(\theta)& = & \frac{(\beta + 1)}{b}\sum_{i=1}^n\sum_{k=1}^b\frac{\partial}{\partial{\theta}} \log p_{\theta}(u_k)p_{\theta}(u_k)^{\beta} - (\beta + 1)\sum_{i=1}^n\frac{\partial}{\partial{\theta}} \log p_{\theta}(y_i)p_{\theta}(y_i)^{\beta},
\nonumber
\end{IEEEeqnarray}
which is what we use in our Poisson regression experiments for $\beta$-divergence posteriors in \Cref{sec:examples}.
Crucially, these experiments require  the construction of upper bounds $\hat{\Lambda}_j$, which we achieve by finding functions $R_j$ so that
$
\left|\frac{\partial}{\partial{\theta_j}} \log p_\theta(U) p_\theta(U)^\beta\right|\leq R_j(\theta,\beta)$.
See Section \ref{sec:betaDappendix} and particularly Corollary \ref{Cor:betaD_Poisson} for full details.

While the function in \eqref{eq:candidate-direct} is often easy to compute, there are situations where we may not have access to the density $p_{\theta}$ and thus cannot evaluate the second term.
In this scenario, a second candidate function $\varphi_{m,n}$ can be derived whenever we have access to an easy-to-sample density $p_v$ and a generator $G_{\theta}$ so that $P_{\theta} \sim p_v \circ G_{\theta}^{-1}$.
More specifically, the \textit{direct} integration over $p_{\theta}$ in \eqref{eq:direct-sampling-representation} can then be replaced by the \textit{indirect} integration that amounts to first sampling $v_{1:b} \sim p_v$ and then setting $u_{1:b} = G_{\theta}(v_{1:b})$.
By differentiating both sides of this pushforward representation, we obtain the alternative candidate function
\begin{IEEEeqnarray}{rCl}
\varphi_{b,n}(\theta) 
= \left[\frac{\partial}{\partial\theta}\left\{\frac{\partial}{\partial u_{1:b}} \MD_{b,n}(\theta)\right\}\right]_{u_{1:b} = G_\theta(v_{1:b})}\frac{\partial}{\partial{\theta}}G_\theta(v_{1:b}),
\label{eq:candidate-indirect-sampling}
\end{IEEEeqnarray}
which can be used whenever $P_{\theta}$ can be written as a forward simulation model.
As one of the main advantages of using MMD-Bayes for parameter inference lies in its ability to deal with precisely this type of simulation model, our numerical demonstrations on regression and copula models using MMD-Bayes in \Cref{sec:examples} rely on this alternative representation.
Once again, this requires the existence of computational upper bounds $\hat{\Lambda}_j$, which are given in \Cref{sec:MMDappendix}, and particularly in Corollaries \ref{cor:regression-MMD} and  \ref{cor:MMD-copula}.


\section{Applications}\label{sec:examples}

To illustrate the practical implications of our theoretical findings, we will compare P-MCMC and our proposed zig-zag sampler on  numerical examples.
As our theory predicts, the zig-zag sampler we have proposed is insensitive to the number of simulations $b$ across all these examples.
While one may wish to tune $b$ to reduce the algorithm's runtime, it correctly targets $\pi(\theta\mid \MD_n)$ regardless of how small one chooses $b$. 
%
 In contrast, the posterior inferences provided by
P-MCMC-based approaches can strongly depend on the number of samples $m$ at each sampling iteration---so that in practice, large values of $m$ may be required to obtain reasonable approximations of $\pi(\theta\mid \MD_n)$. 
While our theory clarifies at which rate $m = m(n)$ ought to grow to ensure that the approximation error is sufficiently small, rates are only of limited help in deciding how large $m$ should be in practice.
In particular, our experiments show that the scale of the data, the dimensionality of the parameter vector, and the shape of the posterior all have non-negligible influence on the approximation error for any finite $n$ and $m$.
To ensure good posterior approximations in practice, P-MCMC thus leaves us with two choices: re-running the algorithm with increasing choices of $m$ until inferences stabilise sufficiently, or choosing $m$ inordinately large and only running the algorithm once.
The former is computationally extremely burdensome, and the latter runs the risk of not choosing $m$ sufficiently large.
In summary then, 
our numerical examples  illustrate the practical advantage of our proposed approach: unlike P-MCMC whose approximation error is  severely affected by the choice of $m$, the zig-zag sampler requires no tuning of  $b$ to ensure valid posterior approximations.

To approximate the true posterior $\pi(\theta\mid\MD_n)$ as best as possible, we run the zig-zag sampler for a substantially longer time period as the gold standard to compare against.
Throughout, we will then compare this gold standard against computationally more attractive versions of the proposed zig-zag sampler with $b \in \{2, 5, 20, 50\}$ and against block P-MCMC (bP-MCMC) proposed in \citet{tran2016block} with $m \in \{2, 5, 10,20, 50, 100, 1000\}$ (the specific values of  $b$ and $m$ we use for the different examples can be found in the relevant figures). 
Within the class of P-MCMC methods, bP-MCMC is not only the state of the art, but also the strongest possible competitor method against zig-zag sampling we could identify.
Relative to vanilla P-MCMC algorithms, bP-MCMC reduces the number of simulations required at each iteration of the sampler by inducing correlation between successive estimates of the log-loss within an MCMC algorithm.
This speeds up computation, and has been shown to make the resulting  sampling algorithm more efficient (\citealp{deligiannidis2018correlated}).
For all experiments, we additionally strengthen the bP-MCMC as a competitor of zig-zag sampling by setting the covariance matrix for the random walk proposal distribution of bP-MCMC equal to the posterior distribution's covariance matrix (as estimated based on the zig-zag sampler) and initialising it at the true parameter values.
For additional details on how we implement this sampler, see \Cref{app:bPMCMC}. 
We compare the algorithms on three examples. 
The first two use MMD-based losses and extend the work in \citet{alquier2023estimation} and \citet{alquier2024universal}, and the last uses $\beta$-divergences and builds on  \citet{ghosh2016robust}.
The chosen examples study generalised posteriors which are motivated by robustness to outliers for three different settings: copula models, linear regressions, and Poisson regressions.

\subsection{Outlier-Robust Inference in Copulas}\label{sec:mmd-copula}

Recently, \citet{alquier2023estimation} proposed the MMD-based loss  $\MD_{m,n}^k$ presented in \Cref{subsec:applicability-of-results} for robust parameter estimation in copulas---a class of models that can be particularly susceptible to the presence of outliers.
%
In our first example, we use this loss to construct posteriors as in \citet{cherief2020mmd}, and compare inferences using bP-MCMC and zig-zag-based approaches. 
The parameter of interest is the correlation coefficient $\rho(\theta) = \frac{2}{1+\exp(-\theta)} -1$ which is parameterised with $\theta$ and features in a simple bivariate Gaussian copula $C_{\theta}(u) = \Phi_{2, \rho(\theta)}(\Phi^{-1}(u_1), \Phi^{-1}(u_2))$. 
Here, $u\in(0,1)^2$,  $\Phi^{-1}(\cdot)$ the inverse cdf of a standard normal, and $\Phi_{2, \rho}$ the cdf of a bivariate standard normal with correlation $\rho$.
To infer $\rho = \rho(\theta)$, we posit the prior $\theta \sim \operatorname{Beta}(1, 1)$, and construct a Gibbs measure using $\MD_{m,n}^k$ for $k$ being a radial basis kernel with lengthscale $\gamma = 1$.

For simplicity, we consider a well-specified model and generate artificial datasets from the copula model with $n\in\{100,1000\}$ observations  with correlation $\rho=0.50$, before comparing posterior inferences on $\theta$ for the generated data using different choices of $m$ (bP-MCMC) and $b$ (zig-zag).
Our implementation of the zig-zag sampler for this example relies on the representation in \eqref{eq:candidate-indirect-sampling}, and the corresponding computational bounds are derived in \Cref{cor:MMD-copula} of \Cref{sec:MMDappendix}.

\Cref{Fig:copula_PM_vs_ZZ_n100} plots the results, and demonstrates that the implicit target $\overline{\pi}(\theta\mid\MD_{m,n})$ of bP-MCMC varies considerably with $m$.
Additionally, the comparison across $n=100$ and $n=1000$ empirically verifies the practical importance of Theorem \ref{thm:new}, and demonstrates that $m$ has to grow as a function of $n$ to ensure a good quality of posterior approximations: for $n=100$, choosing $m\ge10$ yields a reasonably accurate approximation, but for $n=1000$ we seem to require $m \ge 1000$. 
In contrast, the zig-zag algorithm draws samples from the correct target $\pi(\theta\mid \MD_n)$ irrespective of $b$ and $n$.
%

\begin{figure}[!ht]
\begin{center}
\includegraphics[trim= {0.0cm 0.00cm 0.0cm 0.0cm}, clip,  width=0.35\columnwidth]{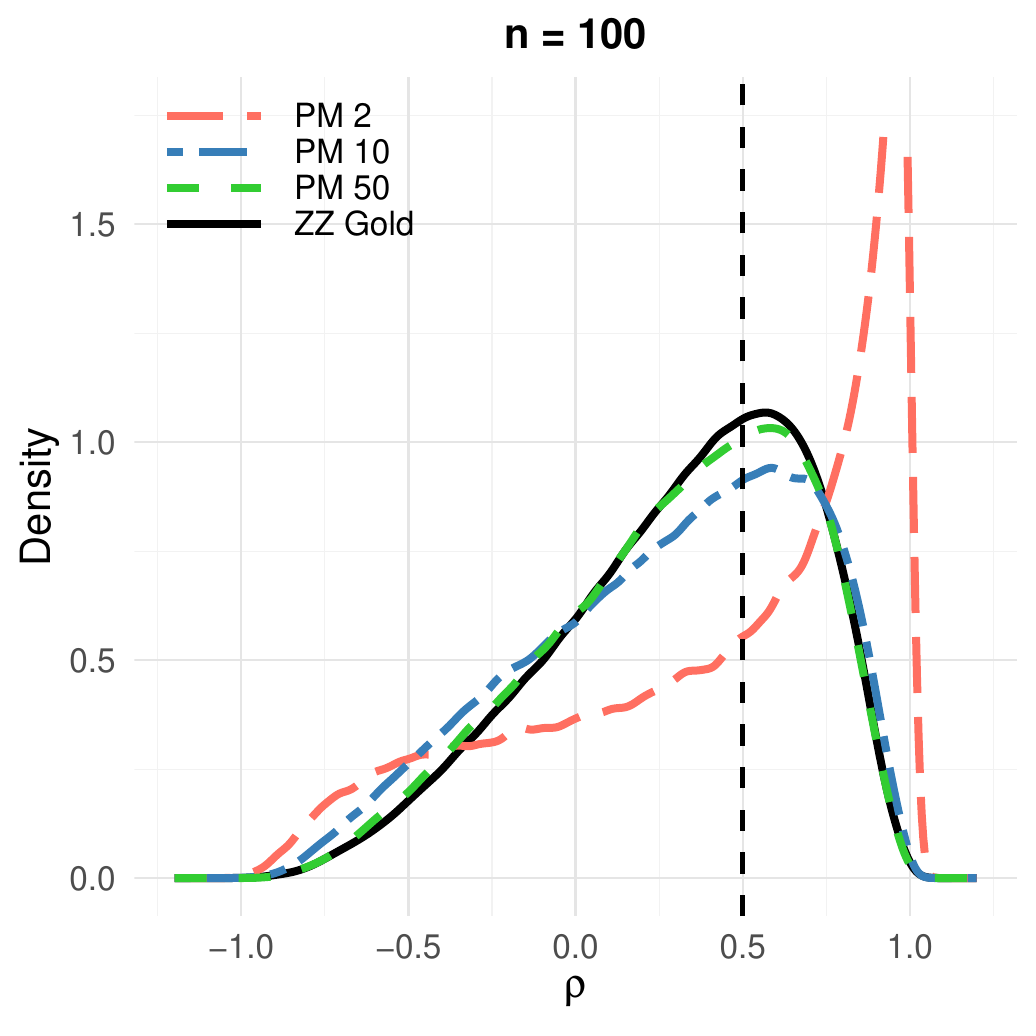}
\includegraphics[trim= {0.0cm 0.00cm 0.0cm 0.0cm}, clip,  width=0.35\columnwidth]{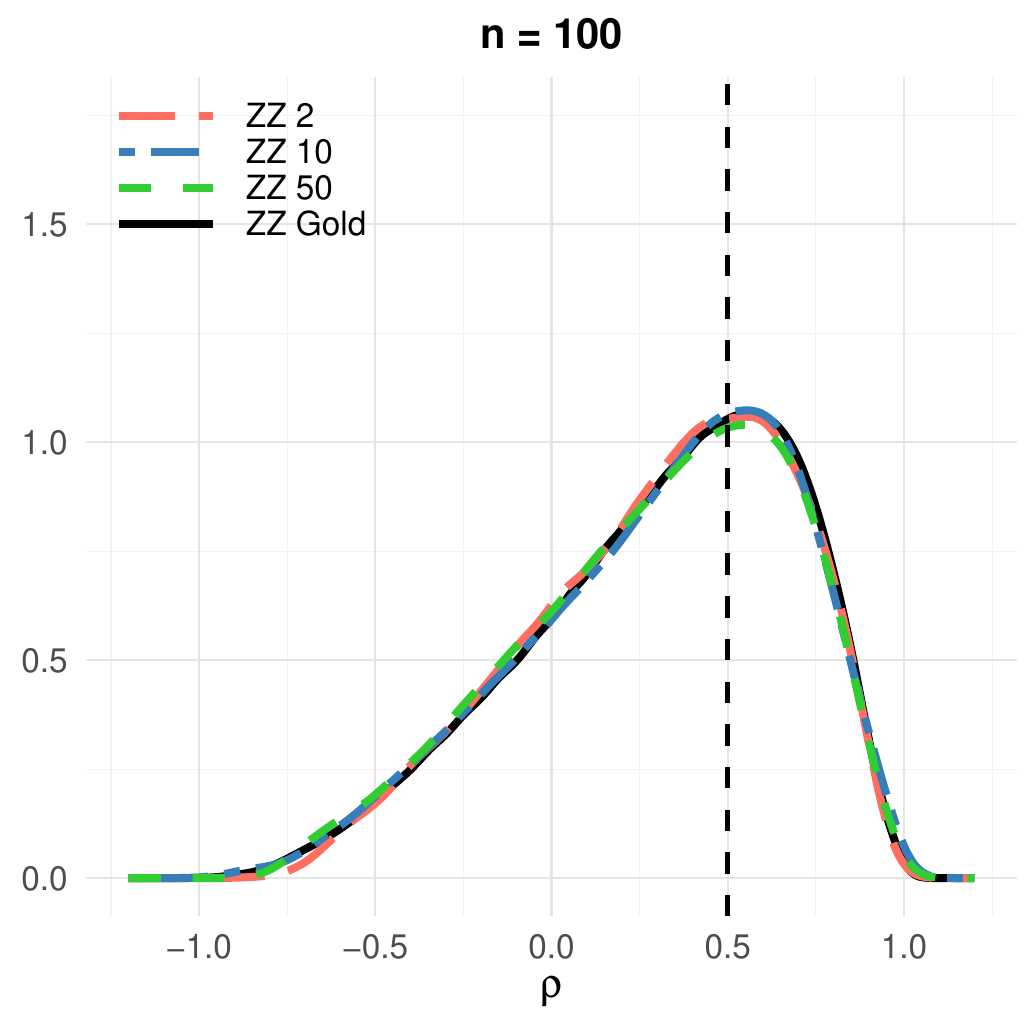}\\
\includegraphics[trim= {0.0cm 0.00cm 0.0cm 0.0cm}, clip,  width=0.35\columnwidth]{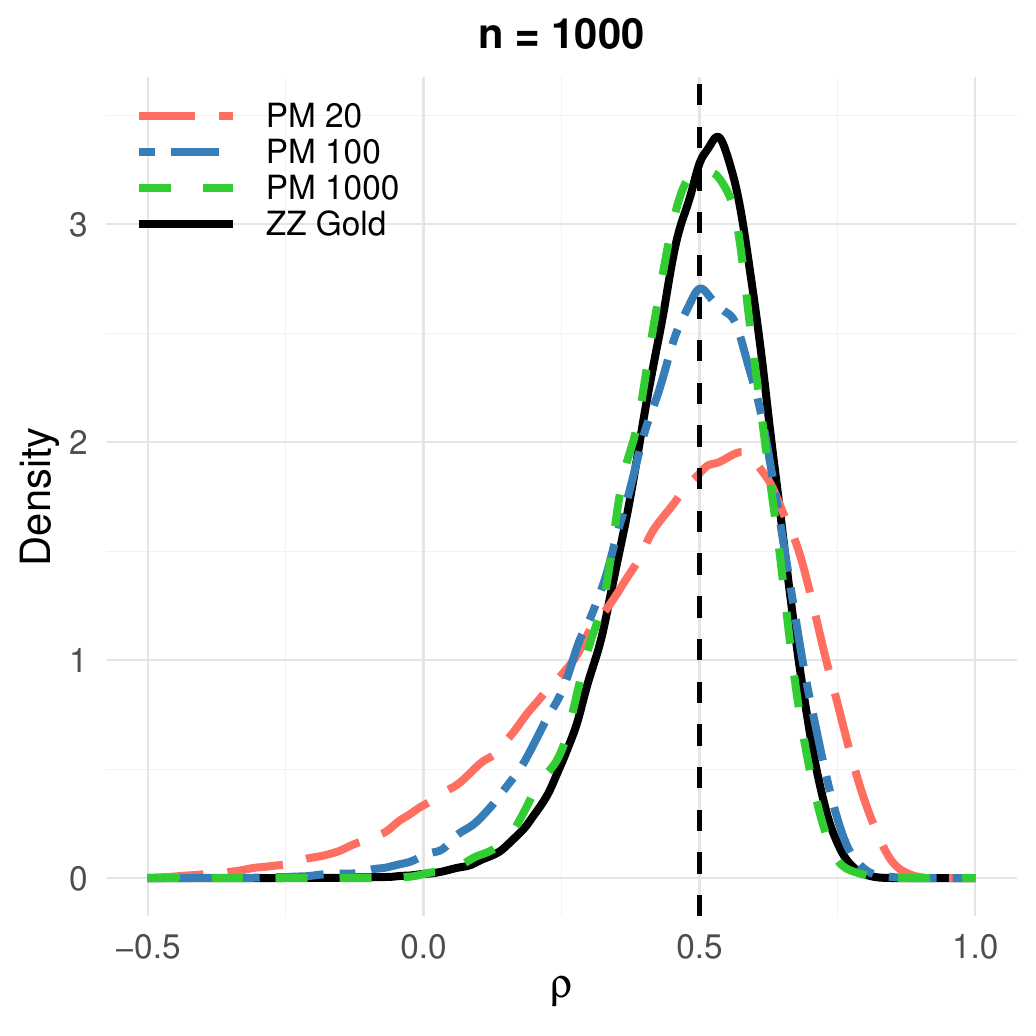}
\includegraphics[trim= {0.0cm 0.00cm 0.0cm 0.0cm}, clip,  width=0.35\columnwidth]{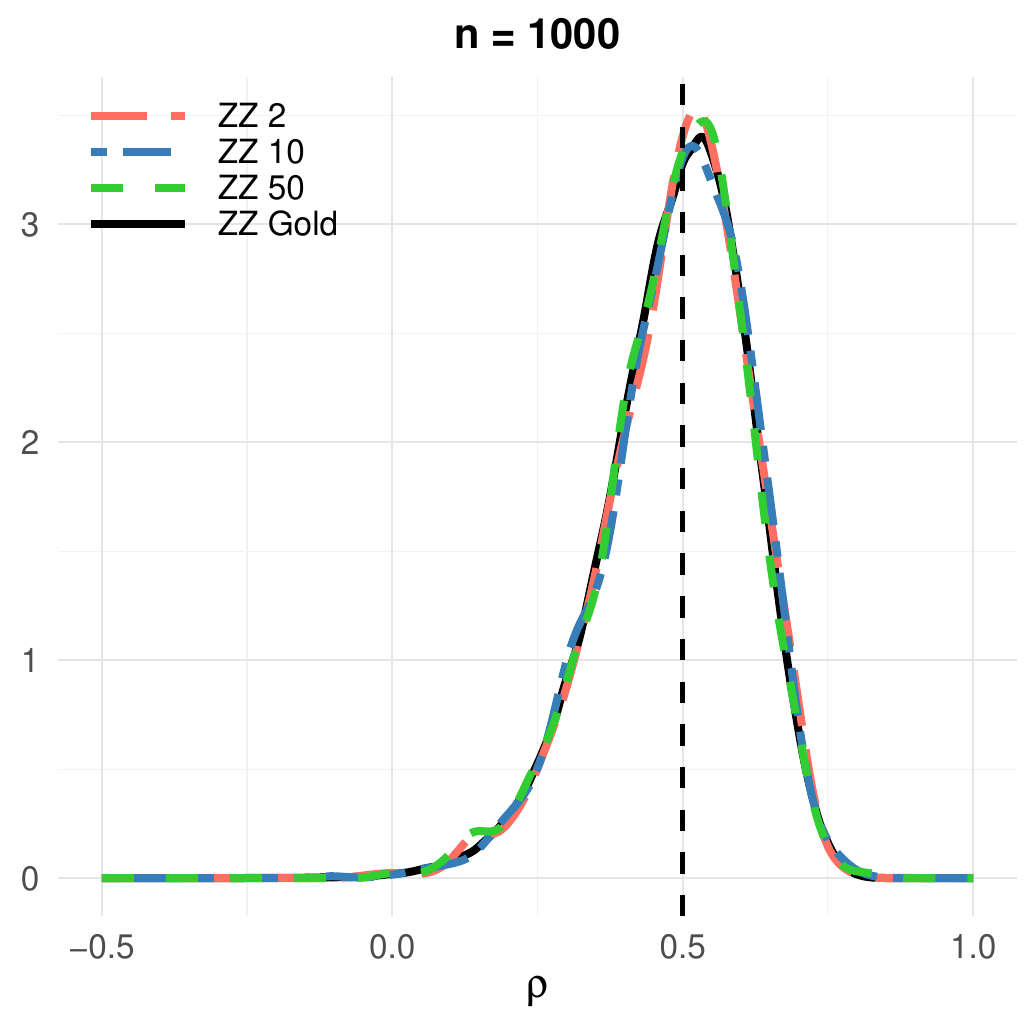}\\
\caption{
$\MMD$-Bayes posterior density for $\rho$ in the Gaussian copula model. 
Throughout all panels,  \textit{ZZ Gold} should be thought of corresponding to $\pi(\theta\mid\MD_n)$, and was obtained using a long run of zig-zag (\textit{ZZ}) sampling. 
For $n=100$, the \textbf{top left} panel displays bP-MCMC (\textit{PM}) posterior approximations $\overline{\pi}(\theta\mid\MD_{m,n})$ for $m\in \{2,10,50\}$, which should be contrasted to the outcomes of the zig-zag with $b \in \{2,10.50\}$ in the \textbf{top right} panel.
While the posterior approximations of bP-MCMC vary substantively with $m$, the choice of $b$ has no impact on the approximations obtained via  zig-zag sampling.
In the bottom panels, the same phenomenon is replicated for the larger sample size $n=1000$.
Despite the fact that we run bP-MCMC with the even larger choices of $m\in \{20,100,1000\}$ (\textbf{bottom left}), the zig-zag sampler performs more reliably, and without requiring any increase of $b$ (\textbf{bottom right}).
}
\label{Fig:copula_PM_vs_ZZ_n100}
\end{center}
\end{figure}

To contrast the computational overhead of zig-zag samplers and bP-MCMC, it is not enough to compare $b$ and $m$.
Instead, we consult \Cref{Fig:copula_PM_vs_ZZ_sims}, which for $n=1000$ compares posterior approximation error between both approaches relative to the number of times the algorithms simulate from the copula model.
The plot uses the difference between estimated and true posterior mean and posterior standard deviation as benchmarks.\footnote{The `true' mean and standard deviation are based on a zig-zag sampler with $b=500$}
The results illustrate the relative cost of reducing posterior approximation errors as a function of calls to the simulators, and shows that while the zig-zag sampler's computational efficiency seems relatively independent of $b$, the choice of $m$ interacts drastically with the performance of bP-MCMC.
Most obviously, the plots show that small choices of $m$ lead to biased estimates. In fact, the bias cannot even be overcome convincingly for large choices of $m$.
Additionally, large choices of $m$ significantly slow down the speed at which the algorithm converges.

\begin{figure}[!htp]
\begin{center}
\begin{subfigure}[t]{0.4\textwidth}
    \includegraphics[scale=0.35]{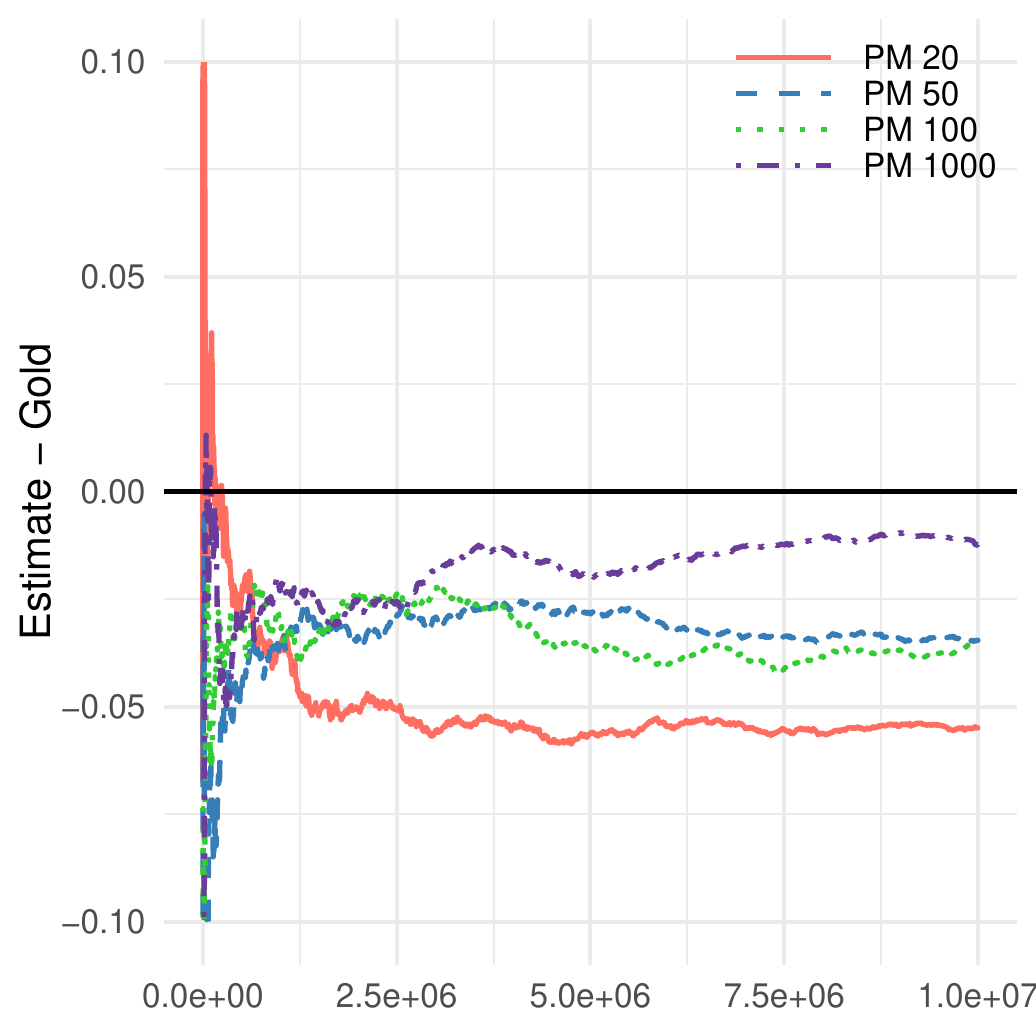}
    \caption{PM, mean}
\end{subfigure}
\begin{subfigure}[t]{0.4\textwidth}
    \includegraphics[scale=0.35]{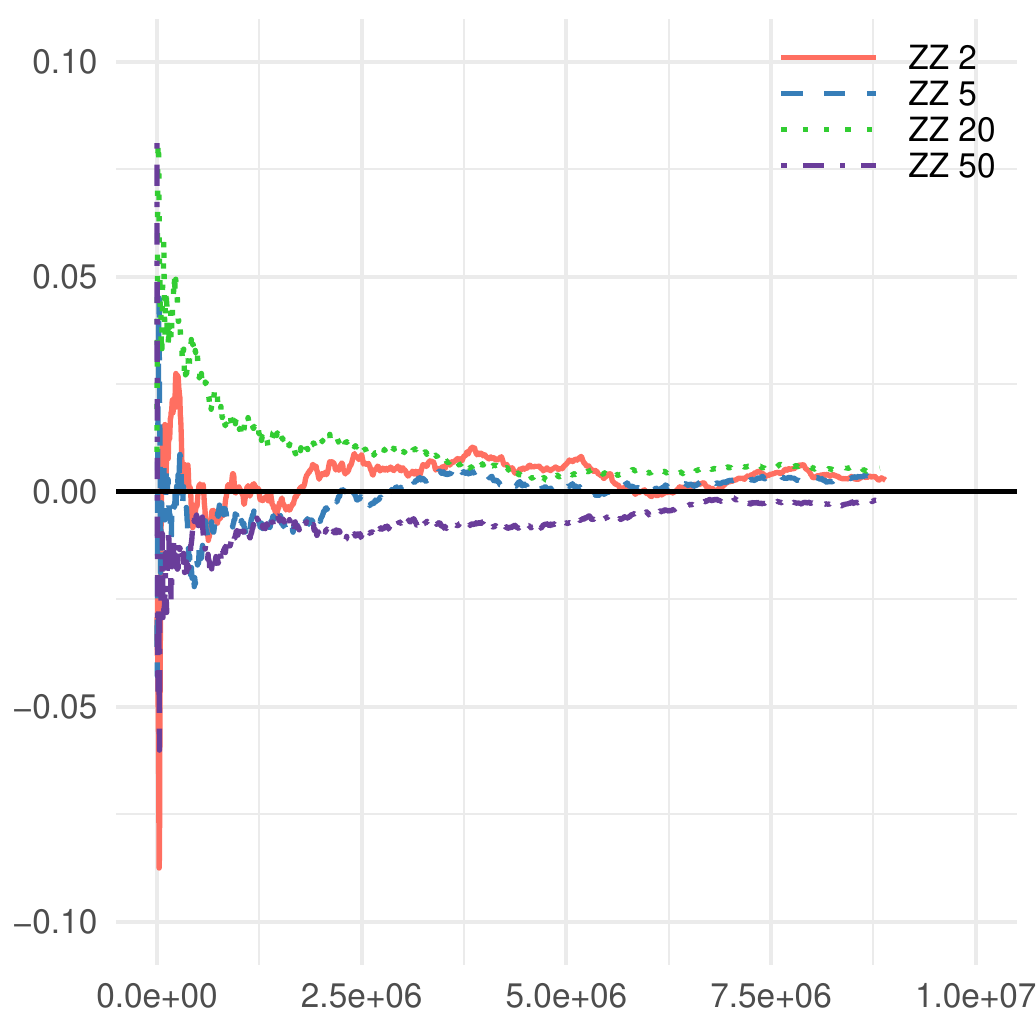}
    \caption{ZZ, mean}
\end{subfigure}
\begin{subfigure}[t]{0.4\textwidth}
    \includegraphics[scale=0.35]{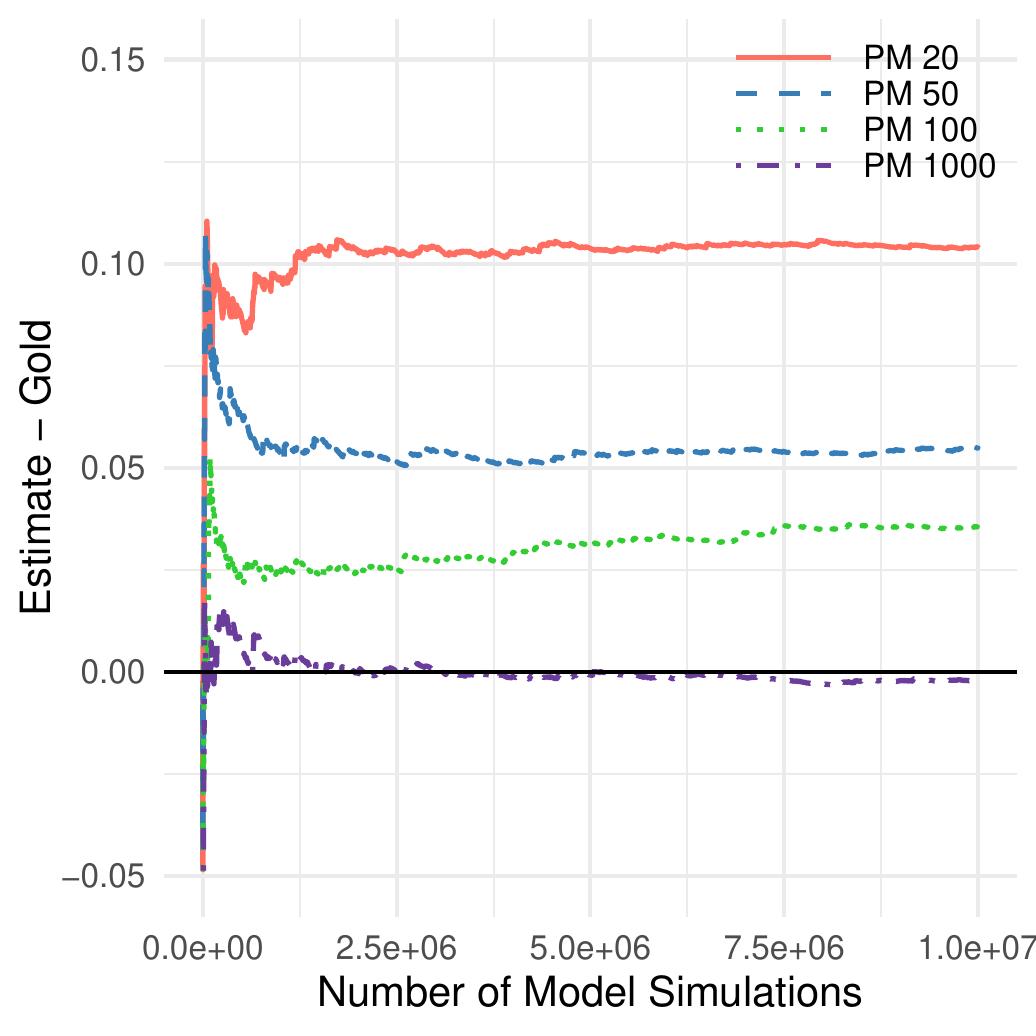}
    \caption{PM, standard deviation}
\end{subfigure}
\begin{subfigure}[t]{0.4\textwidth}
    \includegraphics[scale=0.35]{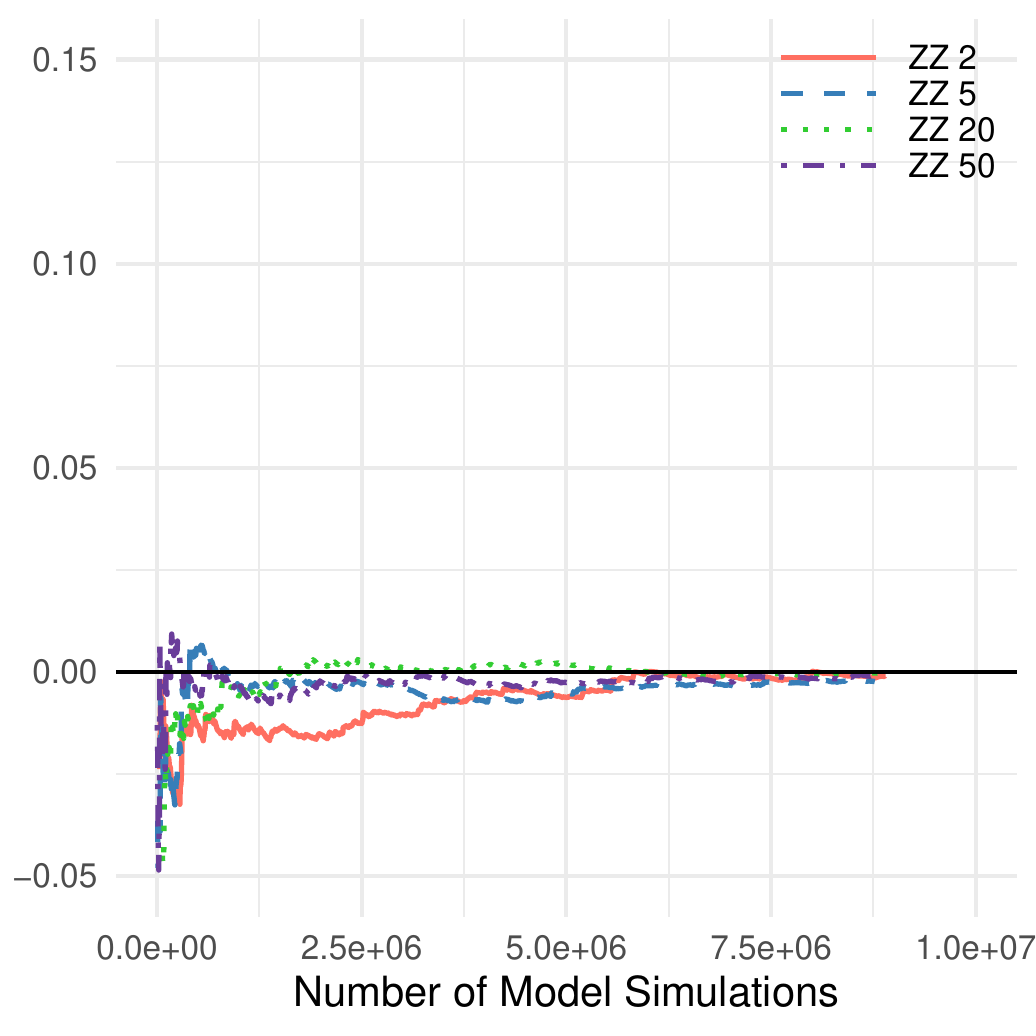}
    \caption{ZZ, standard deviation}
\end{subfigure}
\caption{
We investigate computational efficiency by plotting
approximation accuracy as a function of the number of times the algorithms draw a simulation from the model. 
Using a long run of zig-zag sampling as the gold standard and an essentially exact approximation of $\pi(\theta\mid\MD_n)$, the plots chart the accuracy of the bP-MCMC (\textit{PM}) and zig-zag (\textit{ZZ}) samplers in the Gaussian copula model for $n=1000$ along the $y$-axis as the number of model simulations increases along the $x$-axis. 
In the top row, this is done by plotting 
differences between true and estimated posterior means for bP-MCMC with $m\in\{20,50,100,1000\}$ (\textbf{top left}) and zig-zag sampling with $b\in\{2,5,20,50\}$ (\textbf{top right}).
Meanwhile, the bottom row  compares the differences between true and estimated posterior standard deviations for bP-MCMC (\textbf{bottom left}) and zig-zag sampling (\textbf{bottom right}).
While the zig-zag sampler's efficiency seems to behave similarly for all choices of $b$, the bP-MCMC sampler induces a trade-off between convergence speed and approximation accuracy that is determined by the magnitude of $m$. 
}
\label{Fig:copula_PM_vs_ZZ_sims}
\end{center}
\end{figure}

\subsection{Outlier-Robust Inference in Regression}

In the remainder of this section, we place the findings for the copula model into a larger context:
In \Cref{Sec:RegressionMMD}, we show that the results of the previous section are not an artifact of the copula model, and replicate for inference in linear regression models.
In \Cref{Sec:PoissonRegression},  we then go on to contrast this with inferences in Poisson regression models using a Gibbs measure based on $\beta$-divergences.
This setting is hand-picked as a pedagogical illustration for the kind of conditions under which the bias induced by $\overline{\pi}(\theta\mid\MD_{m,n})$ can be expected to be relatively small, and in which P-MCMC methods can thus generally still compete with zig-zag sampling.

\subsubsection{Outlier Robust linear regression with the MMD}{\label{Sec:RegressionMMD}}

Here, we consider Gibbs measures using MMD-based losses as constructed in \citet{alquier2024universal} using a radial basis kernel with lengthscale parameter $\gamma = 1$. 
We model $y_i \mid x_i, \beta, \sigma^2 \sim \mathcal{N}(x_i^{\top}\beta, \sigma^2)$ for $x_i \in \mathbb{R}^p$, $y_i \in \mathbb{R}$, and wish to conduct robust posterior inference on $\theta = (\beta^\top, \log \sigma)^\top$ given the priors $\beta \sim \mathcal{N}(0, 25 \cdot I_p)$ and $\sigma^2 \sim \mathcal{IG}(2,0.5)$.

To generate data, we take $x_{i1} = 1$ and sample $(x_{i2},\dots,x_{i7})^\top \overset{\text{iid}}{\sim} \mathcal{N}(0, I_7)$ for all $i=1,2,\dots n$ with $n=100$.
Setting $\beta =  (4, 4, 3, 3, 2, 2, 1, 1)^\top$, we then generate the response variables as $y_i = x_i^{\top}\beta +\varepsilon_i$, where $\varepsilon_i \overset{\text{iid}}{\sim}  \operatorname{DExp(0,1)}$ for $\operatorname{DExp(0,1)}$ denoting the double exponential distribution---making the Gaussian regression model for which inference is performed misspecified relative to the true data-generating mechanism (cf.  \Cref{Supp:FigsMMDReg}).
Our implementation of the zig-zag sampler for this example relies on the representation in \eqref{eq:candidate-indirect-sampling}, and the corresponding computational bounds are derived in \Cref{cor:regression-MMD} of \Cref{sec:MMDappendix}, which also contains additional  details.
.

Figure \ref{Fig:regression_PM_vs_ZZ_n100_beta_sigma} compares the posterior approximations for $\beta_1$ and $\log(\sigma)$ (see \Cref{sec:full-MMD-regression-comparison} for a full comparison). 
As expected, the results mirror those in Section \ref{sec:mmd-copula}: the choice of $b$ is largely inconsequential for the inferences produced by zig-zag sampling, but the bP-MCMC  sampler is quite sensitive to the choice of $m$---particularly for its inferences on $\log(\sigma)$. 
Complementing this, Figure \ref{Fig:regression_PM_vs_ZZ_sims} charts the approximation error for posterior means and standard deviations of $\beta_1$ as a function of the number of simulations from the model used.
Once again, the findings are similar as in the copula example.

\begin{figure}[!ht]
\begin{center}
\includegraphics[trim= {0.0cm 0.00cm 0.0cm 0.0cm}, clip,  width=0.35\columnwidth]{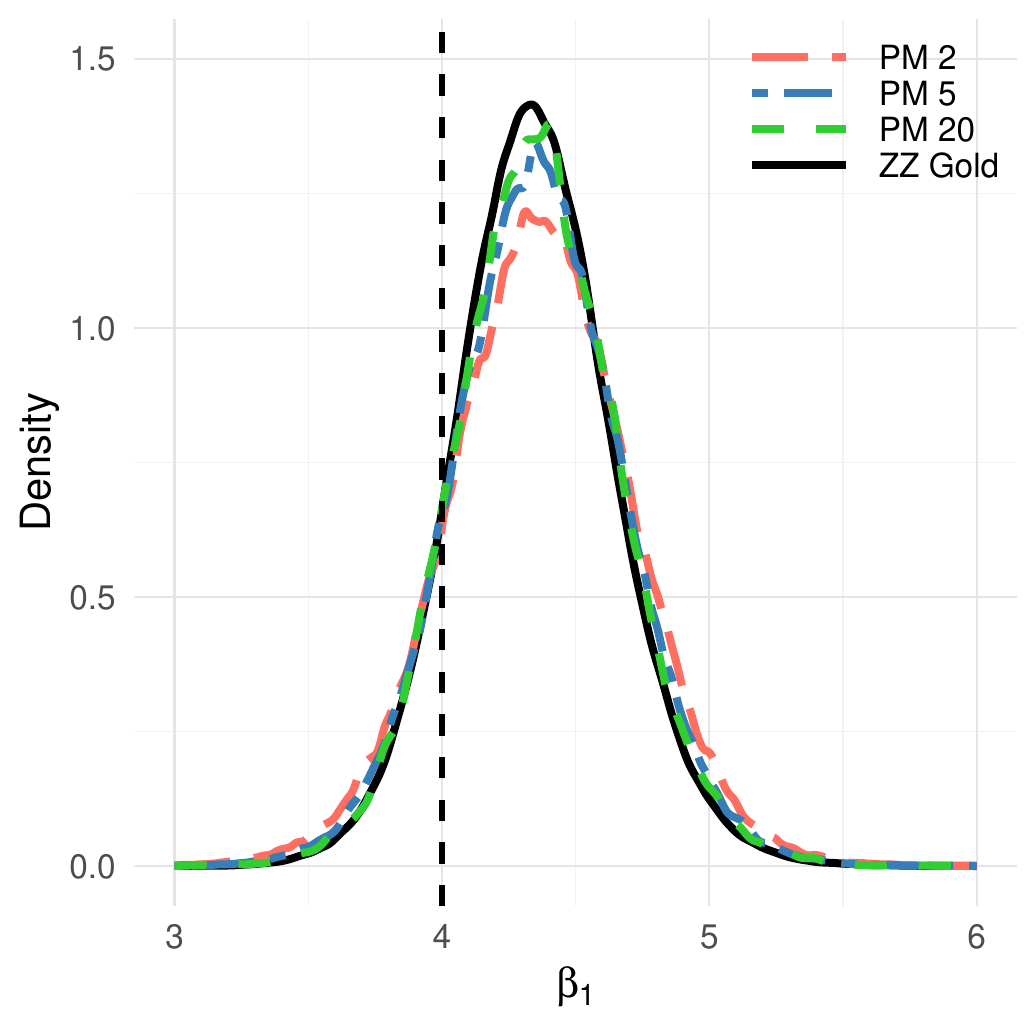}
\includegraphics[trim= {0.0cm 0.00cm 0.0cm 0.0cm}, clip,  width=0.35\columnwidth]{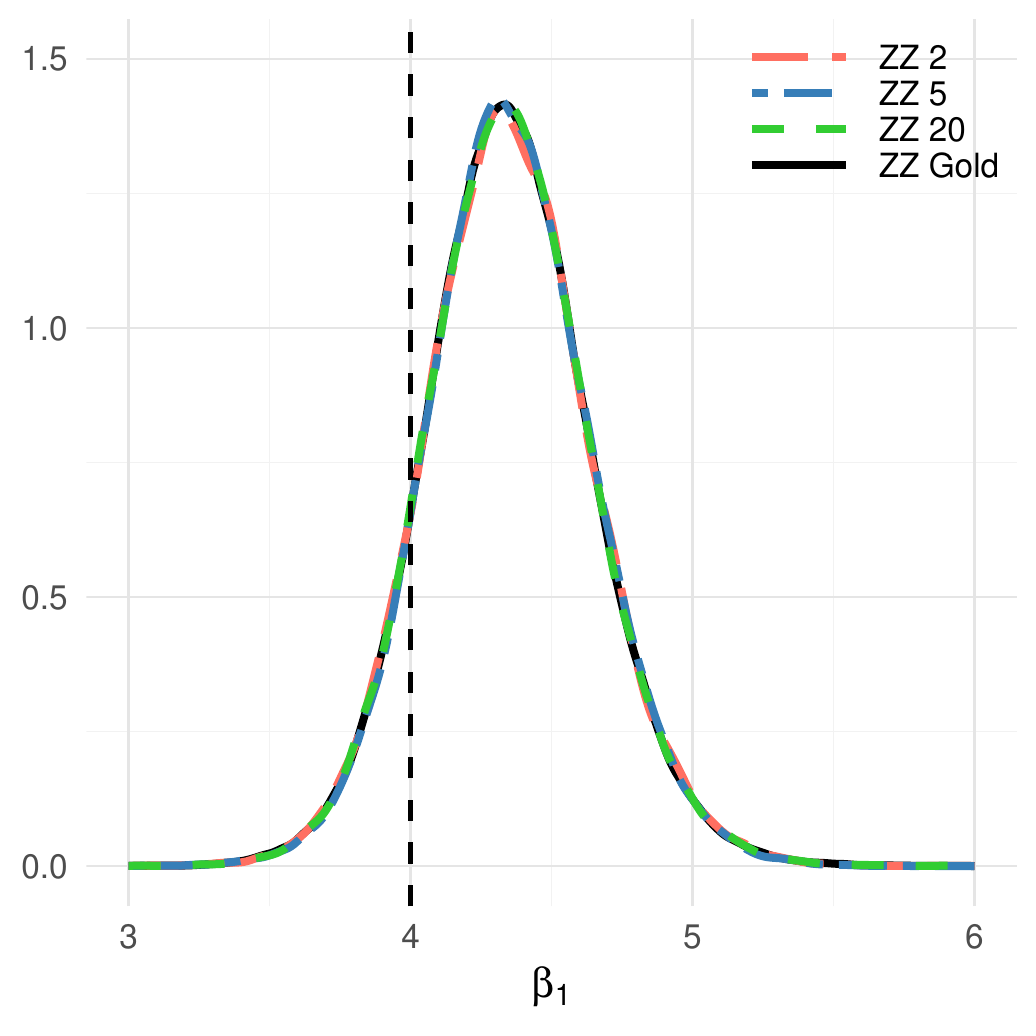}\\
\includegraphics[trim= {0.0cm 0.00cm 0.0cm 0.0cm}, clip,  width=0.35\columnwidth]{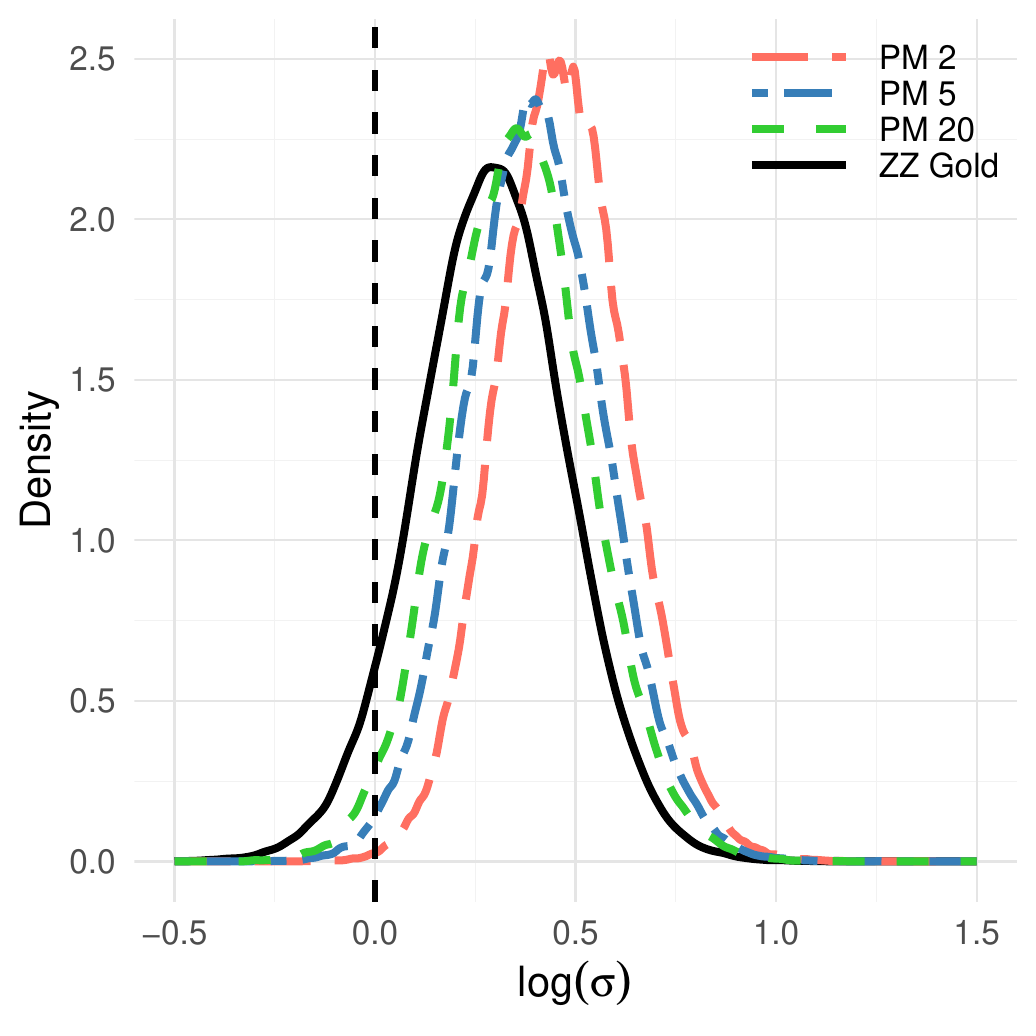}
\includegraphics[trim= {0.0cm 0.00cm 0.0cm 0.0cm}, clip,  width=0.35\columnwidth]{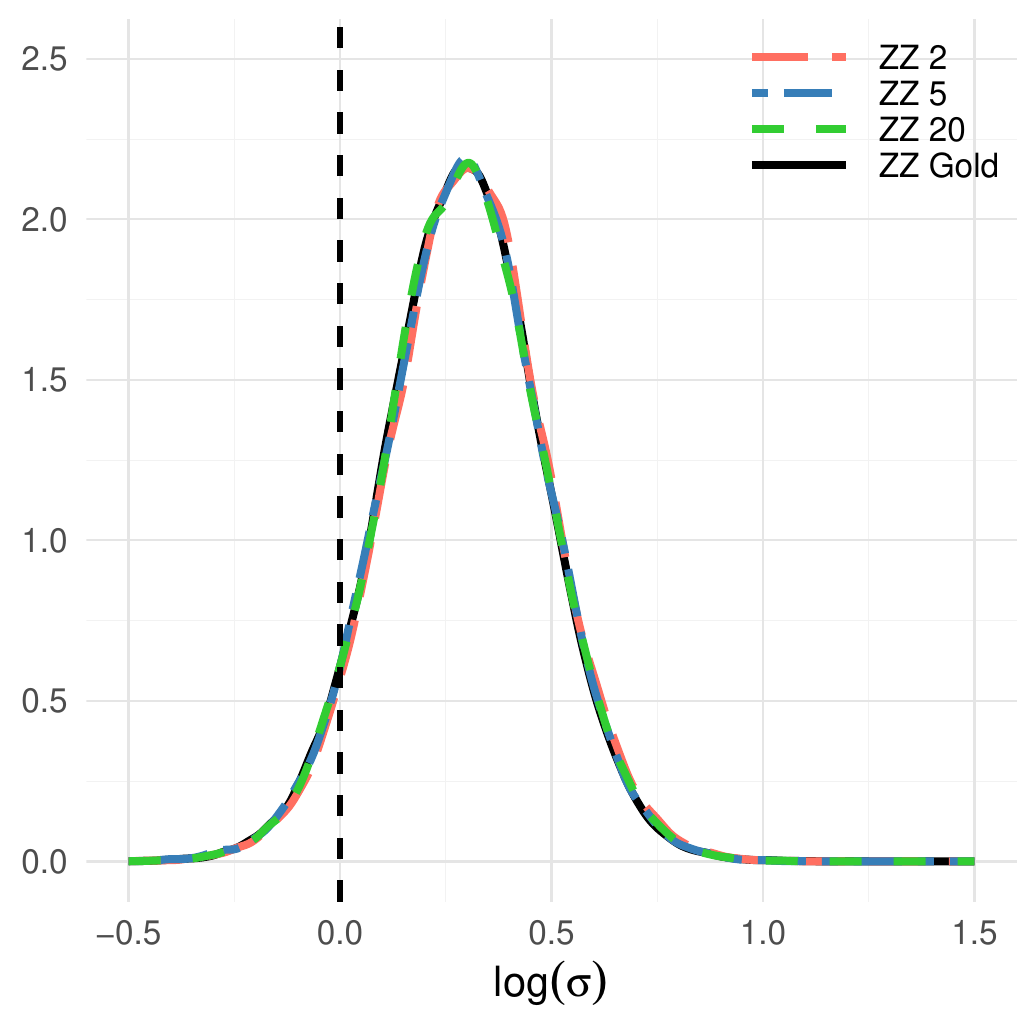}\\
\caption{Comparison of $\MMD$-Bayes posterior density for  $\beta_1$ and $\log(\sigma)$  in the Gaussian linear regression model with $n=100$ as produced by bP-MCMC (\textit{PM}) and zig-zag sampling (\textit{ZZ}) for different choices of $m$ and $b$. 
Figure legend and interpretation as  in \Cref{Fig:copula_PM_vs_ZZ_n100}.
}
\label{Fig:regression_PM_vs_ZZ_n100_beta_sigma}
\end{center}
\end{figure}

\begin{figure}[!ht]
\begin{center}
\begin{subfigure}[t]{0.45\textwidth}
    \includegraphics[scale=0.42]{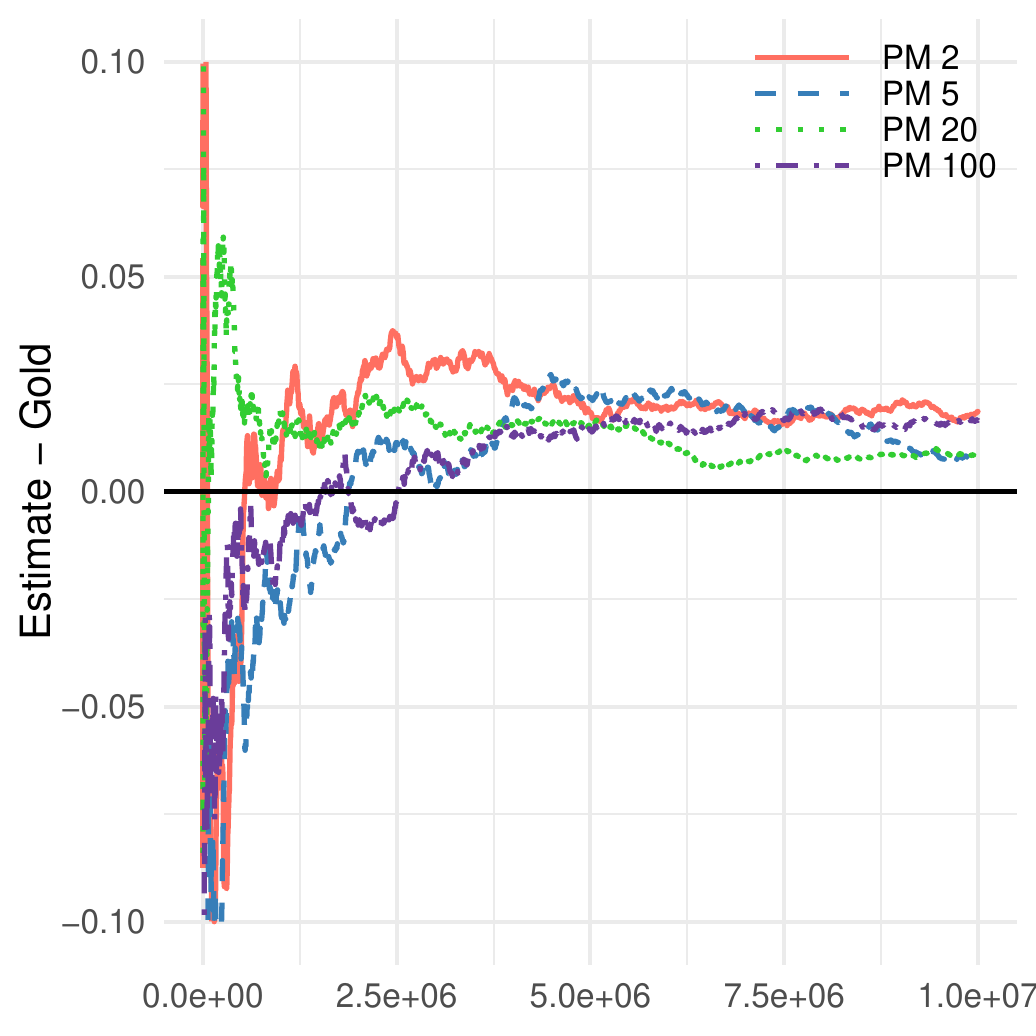}
    \caption{PM, mean}
\end{subfigure}
\begin{subfigure}[t]{0.45\textwidth}
    \includegraphics[scale=0.42]{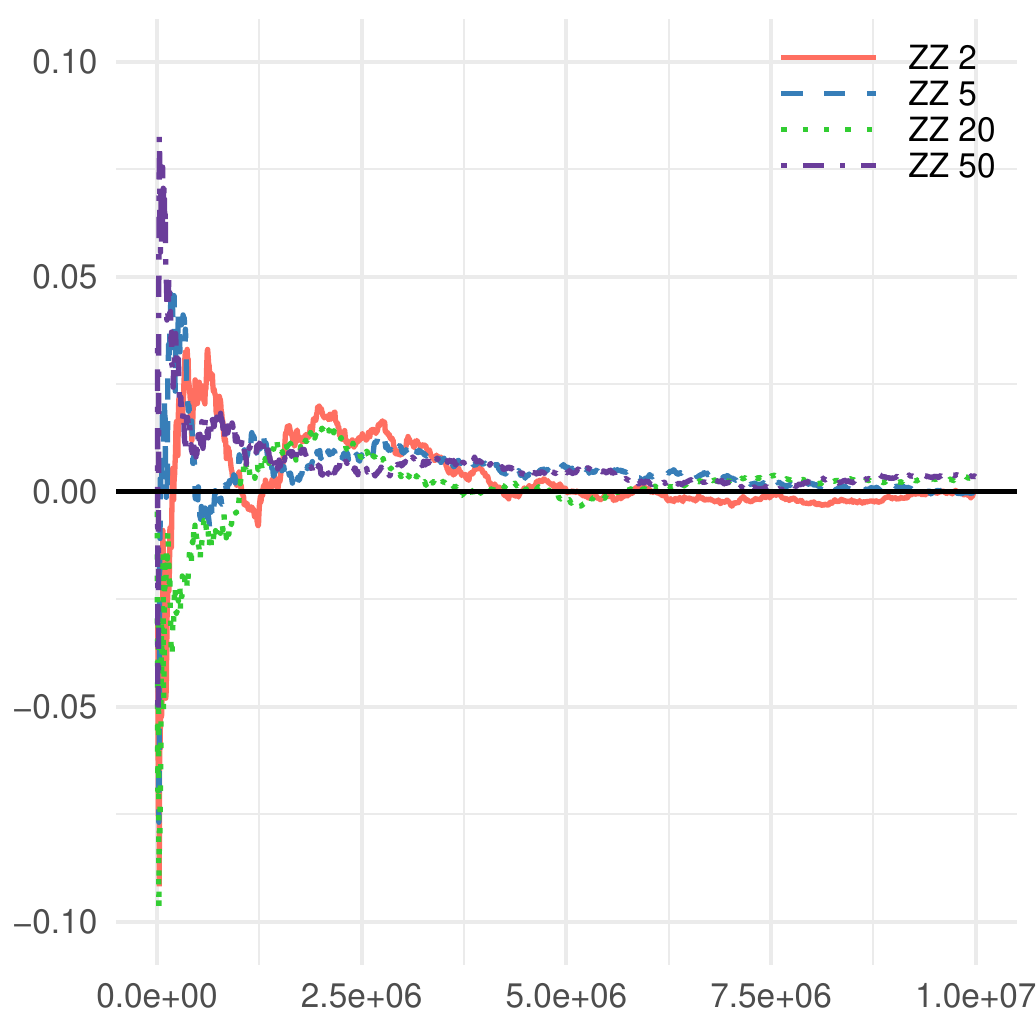}
    \caption{ZZ, mean}
\end{subfigure}
\begin{subfigure}[t]{0.45\textwidth}
    \includegraphics[scale=0.42]{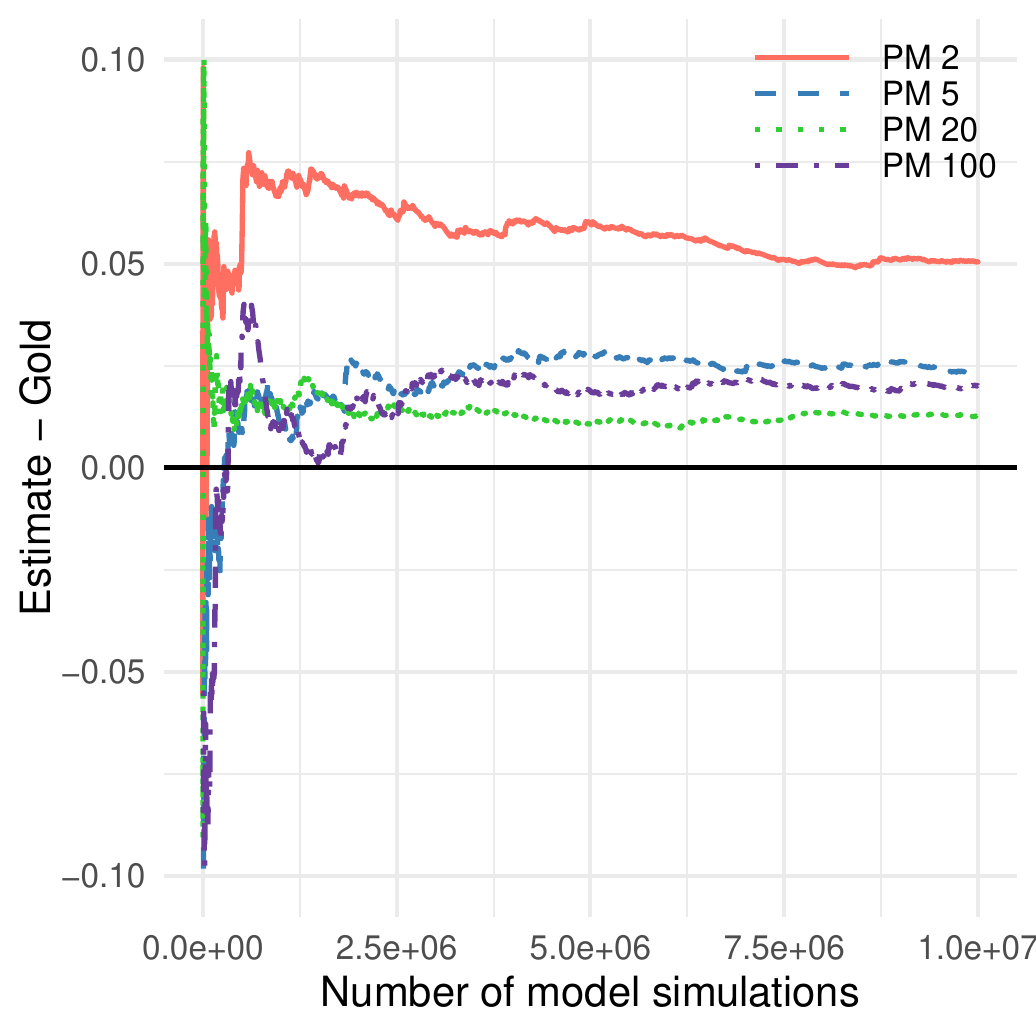}
    \caption{PM, standard deviation}
\end{subfigure}
\begin{subfigure}[t]{0.45\textwidth}
    \includegraphics[scale=0.42]{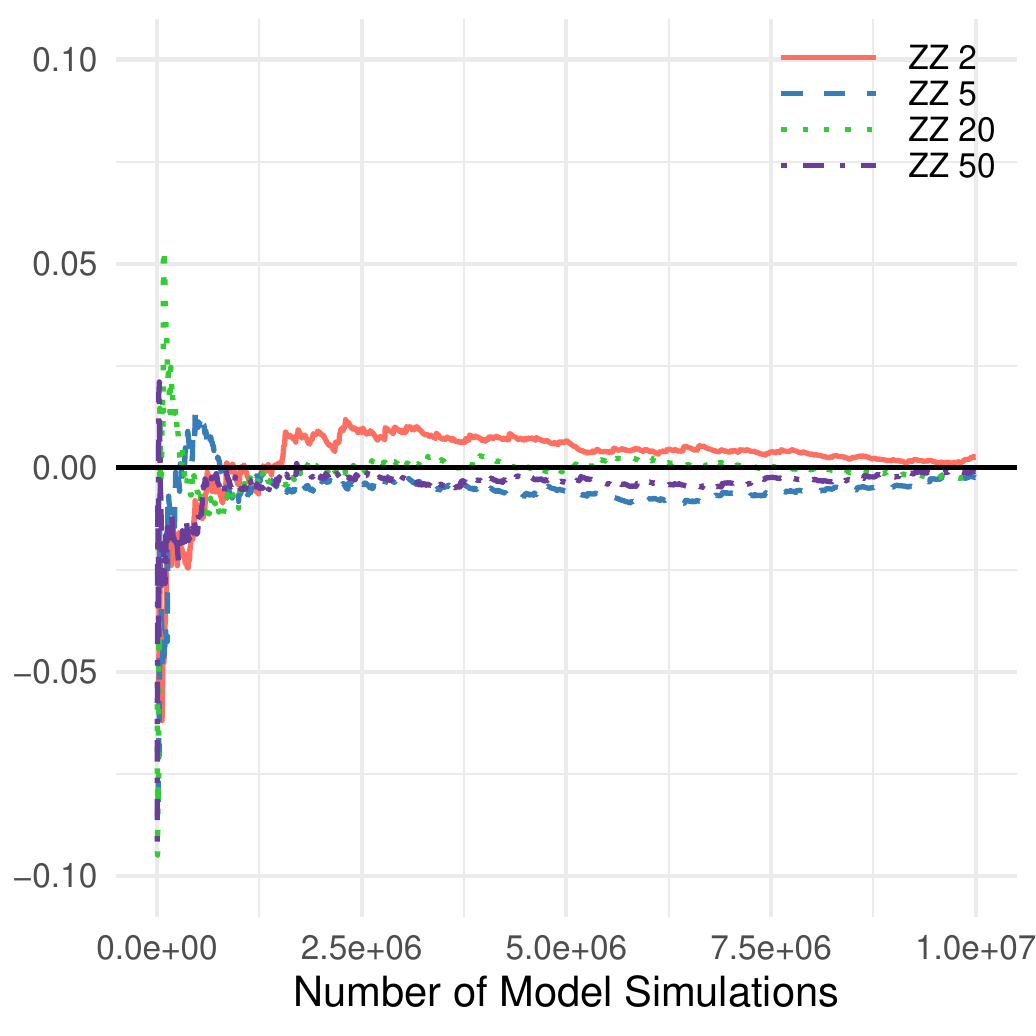}
    \caption{ZZ, standard deviation}
\end{subfigure}
\caption{
Accuracy of bP-MCMC (\textit{PM}) and zig-zag sampling (\textit{ZZ}) as the number of model simulations increases for $\beta_1$ and $n=100$ in the $\MMD$-Bayes regression example. 
Figure legend and interpretation as in \Cref{Fig:copula_PM_vs_ZZ_sims}.}
\label{Fig:regression_PM_vs_ZZ_sims}
\end{center}
\end{figure}

\newpage

\subsubsection{Robust Poisson regression with the $\beta$-Divergence}\label{Sec:PoissonRegression}

In our last example, we investigate a setting in which the bias of $\overline{\pi}(\theta\mid\MD_{m,n})$ is practically negligible so that bP-MCMC can still compete with the proposed zig-zag sampler, and draw attention to the type of conditions under which this happens.
To this end, we 
study the posteriors based on the $\beta$-divergence loss $\MD_{m,n}^{\beta}$ presented in \Cref{subsec:applicability-of-results} for Poisson regression models, which directly draws on a long line of literature on constructing generalised posteriors using this discrepancy measure \citep[e.g.][]{ghosh2016robust, knoblauch2018doubly, jewson2018principles, futami2018variational}.
For our experiment, we fix $\beta = 0.5$, and model $y_i \mid x_i, \theta \sim \operatorname{Poiss}(x_i^{\top}\theta)$ with a prior on the $p$ regression parameters given by $\theta \sim \mathcal{N}(0, I_p)$.
Artificial data is generated from this model without misspecification by taking $x_{i1} = 1$ and $(x_{i2}, \ldots, x_{i5})\stackrel{iid}{\sim}\mathcal{N}(0, 0.25^2\cdot I_4)$ for all $i=1,2,\dots n$ with $n=1000$, fixing $\theta = (1, 0.5, 1.5, 0, 0)^\top$, and then sampling $y_i \mid x_i, \theta \sim \operatorname{Poiss}(x_i^{\top}\theta)$.

Our implementation of the zig-zag sampler for this example relies on the representation in \eqref{eq:candidate-direct}, and the corresponding computational bounds are derived in \Cref{Cor:betaD_Poisson} of \Cref{sec:betaDappendix}, which also contains additional  details.
A comparison of the  obtained posteriors for the first regression coefficient is presented in \Cref{Fig:poisson_PM_vs_ZZ_n100}, and reveals a clear contrast to the two previous examples: both bP-MCMC and the proposed zig-zag sampler give nearly identical posteriors across a wide range of both  $m$ and $b$.
This behavior holds for the remaining regression coefficients, as well, and unlike in the two previous examples, both methods now also have a comparable computational overhead; see \Cref{Sec:unifGap} for details.

While this seems to fly in the face of our results thus far, there is a clear reason for this 
significant qualitative difference to our previous examples.
In particular, 
for the special case of the $\beta$-divergence loss on Poisson regression, the Jensen's Gap we identified in  \Cref{Sec:research_gap} as the source of the bias for P-MCMC methods can be bounded by $\frac{e}{4m}$ \textit{uniformly} over $\Theta$ ({see \Cref{Sec:unifGap}, and \Cref{Lem:unif_J_Gap} for details}).
As a result, even for $m=100$, the bias of P-MCMC methods in this Poisson regression model will be no larger than about $0.01$ uniformlly over $\Theta$.
This is in stark contrast to the previous examples for which the difference cannot be uniformly bounded. 
Conversely, it indicates the possibility of identifying uniformity conditions to make P-MCMC algorithms reasonably practical.
While this may appear appealing at first glance, such uniformity conditions would be highly restrictive, and unlikely to apply to many other settings of interest.
Indeed, even staying within the class of $\beta$-divergence based posteriors, switching from inferences on Poisson regressions inferences on a normal location model is already sufficient to violate such uniformity.

\begin{figure}[!ht]
\begin{center}
\includegraphics[trim= {0.0cm 0.00cm 0.0cm 0.0cm}, clip,  width=0.35\columnwidth]{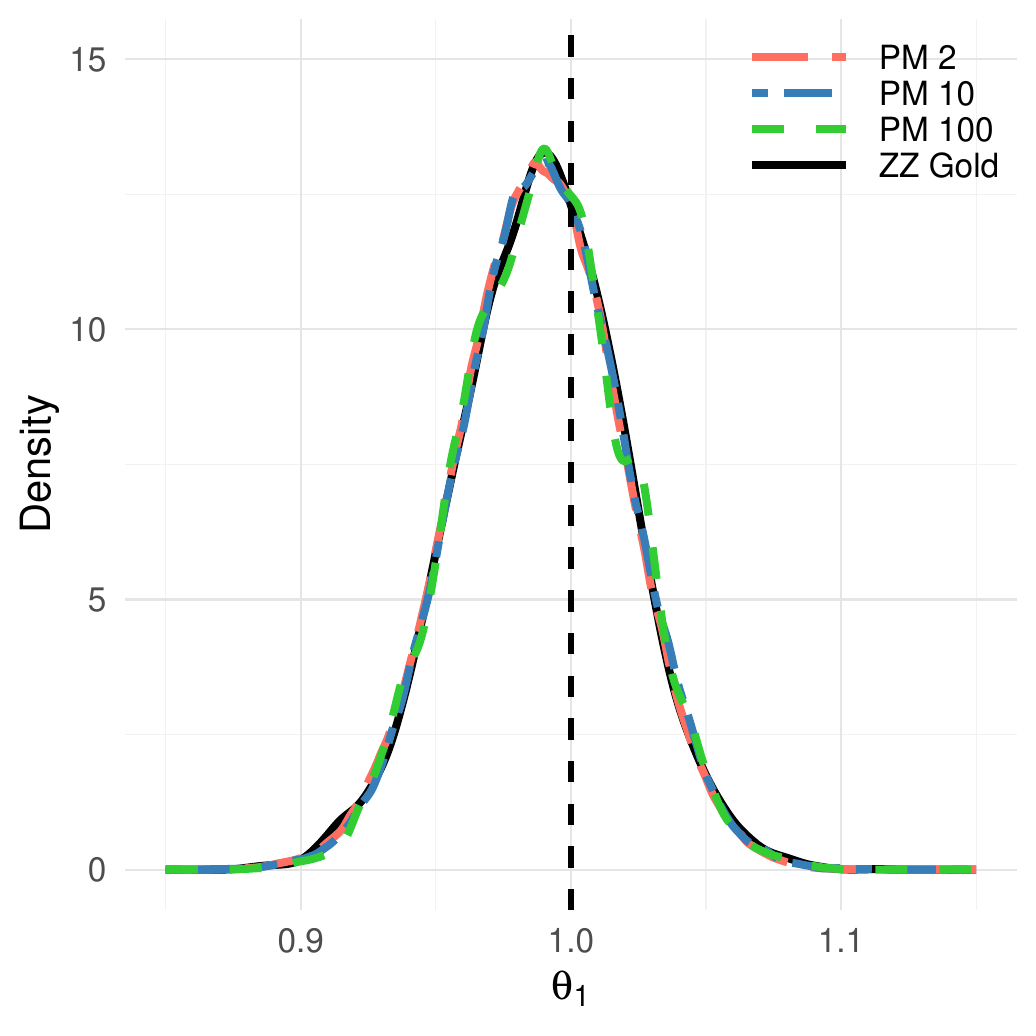}
\includegraphics[trim= {0.0cm 0.00cm 0.0cm 0.0cm}, clip,  width=0.35\columnwidth]{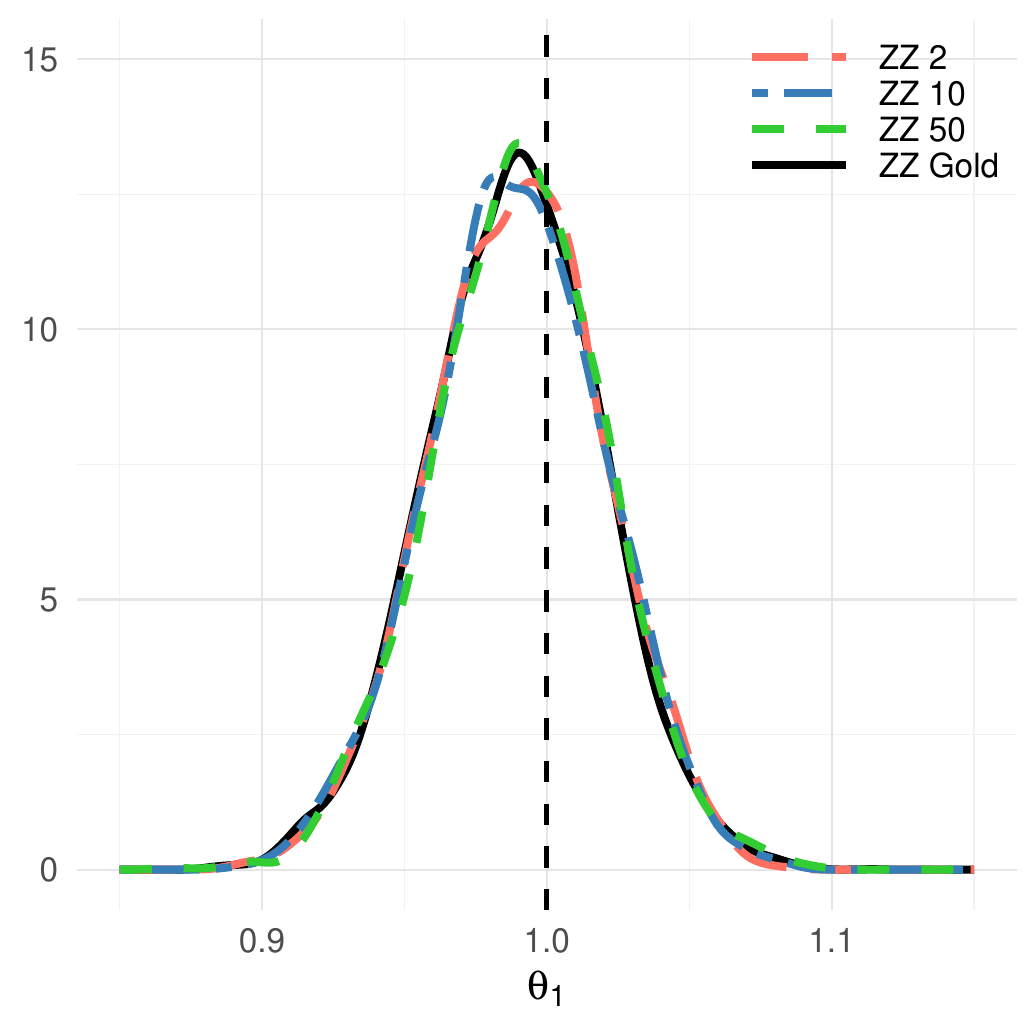}\\
\caption{$\beta$-divergence posterior density for $\theta_1$ in the Poisson regression model.
\textbf{Left} panel contains the bP-MCMC (\textit{PM}) posterior approximations, while the \textbf{right} panel contains results for the zig-zag sampler (\textit{ZZ}). 
Figure legend and interpretation as in \Cref{Fig:copula_PM_vs_ZZ_n100}.  
}
\label{Fig:poisson_PM_vs_ZZ_n100}
\end{center}
\end{figure}

\section{Discussion}\label{sec:conclude}

Frequently, a target Gibbs measure $\pi(\theta \mid \MD_n)$ is computationally infeasible due to the intractability of the loss $\MD_n$. 
In such cases, a common approach is to instead sample from the approximate Gibbs measure $\overline{\pi}(\theta \mid \MD_{m,n})$, which substitutes an intractable loss $\MD_{n}$ for the simulation-based approximation $\MD_{m,n}$ and effectively amounts fo a form of pseudo-marginal MCMC (P-MCMC); {see, e.g., \cite{kaji2023metropolis} and \cite{pacchiardi2024generalized} for specific examples.}
While the use of $\overline{\pi}(\theta \mid \MD_{m,n})$ to approximate $\pi(\theta\mid\MD_n)$ is common within contemporary generalised Bayesian methods, our results provide the first comprehensive investigation of the consequences.

Our first key contribution is a formal proof that 
we may need to use a very large number $m$ of simulated datasets within each P-MCMC iteration to justify such approximations---inducing a trade-off between computational feasibility and severely biased inferences.
Our second key contribution is the identification of unbiased gradient estimates for $\MD_n(\theta)$ as the crucial condition for the validity of a zig-zag sampler that can directly target $\pi(\theta\mid\MD_n)$.
This opens the doors on more advanced and more efficient sampling schemes for a large class of practically relevant Gibbs measures, and our empirical demonstrations indicate the scale of improvements delivered to generalised Bayesian approaches through this class of sampling schemes.

While the unbiasedness of zig-zag samplers means that it can work out of the box and without tuning $b$, it has additional advantages whose exploration would go beyond the scope of this paper.
For example, zig-zag sampling can be easily adapted to target mixtures of continuous and discrete distributions, which frequently arise in variable selection problems and spike-and-slab priors \cite{bierkens2023sticky}.
As a result, the sampler proposed in the current paper is suitable to form the computational foundation of a new generation of generalised Bayesian methodology. For instance, it could be used to combine generalised Bayesian for robust regression with variable selection methods, and open up the possibility of new approaches for choosing between different models or Gibbs posteriors.

\subsection*{Acknowledgments}

We are grateful for remarks by Dr. Edwin Fong, Dr. Francois-Xavier Briol, Dr. Sam Power, and Dr. Sam Livingstone which greatly improved the quality of this manuscript. 
DTF and CD were supported by Australian Research Council funding schemes DE200101070 and FT210100260, respectively.
JK was supported through the UK's Engineering and Physical Sciences Research Council (EPSRC) via EP/W005859/1 and EP/Y011805/1.

{\spacingset{1.0} 

{\footnotesize
\bibliographystyle{chicago}
\bibliography{Bayes_comp,bib2}
}
}

\newpage
\appendix

\addcontentsline{toc}{section}{Appendix} 
\part{\LARGE Supplement: Exact Sampling of Gibbs Measures with Estimated Losses} 

\parttoc 

\section{Proofs of Main Results}\label{app:results}

For our proofs, we use several notations used sparingly or not at all in the main paper.
First of, we will frequently use the more expressive notation $\MD_{m,n}(\theta,u_{1:m})$, rather than $\MD_{m,n}(\theta)$, to emphasize dependence on the simulated data $u_{1:m} \sim P_\theta$. 
Further, we will usually use the shorthand $u = u_{1:m}$ and write $\MD_{m,n}(\theta,u) = \MD_{m,n}(\theta,u_{1:m})$.
Beyond that, for the limiting loss $\MD: \Theta \to \mathbb{R}$ defined in \Cref{ass:mean_bias}, we additionally define $\MD(\theta, \theta') = \MD(\theta) - \MD(\theta')$
In the remainder we let $p_\theta$ denote the density of the model $P_\theta$. Before proving our main result, we first give the following intermediate lemma, which is nothing but an application of Donsker and Varadhan's variational formula that allows us to show that the P-MCMC target $\overline\pi(\theta,u\mid\MD_{m,n})$ is a Gibbs measure. 
While the result is stated for densities $p_{\theta}$ for convenience, it equally applies to general measures $P_{\theta}$.

\begin{lemma}\label{lem:restate}
Let $h(\theta,u)$ be a measurable function such that $\int_\Theta\int_{\mathcal{U}}\pi(\theta)p_\theta(u)e^{h(\theta,u)}\dt\theta\dt u<\infty$. Then, for any $\rho(\theta,u)$ with $\int h(\theta,u)\dt\rho(\theta,u)<\infty$,
$$
\log 
\E_{(\theta,u)\sim\pi\times p_\theta}[e^{h(\theta,u)}]=\sup_{\rho\in\mathcal{P}(\Theta\times \mathcal{U})}\left\{\int_\Theta\int_{\mathcal{U}}{h(\theta,u)}\dt\rho(\theta,u)-\KL\{\rho(\theta,u)\|\pi(\theta)p_\theta(u)\}
\right\}
$$and the supremum is achieved by 
$$
\widehat\rho(\theta,u)=\frac{e^{h(\theta,u)}\pi(\theta)p_\theta(u)}{\int_\Theta\int_{\mathcal{U}}e^{h(\theta,u)}\pi(\theta)p_\theta(u)\dt u\dt \theta}.
$$
\end{lemma}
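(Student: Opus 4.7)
The plan is to recognise this statement as the classical Donsker--Varadhan variational formula applied to the reference probability measure $\mu(\dt\theta, \dt u) := \pi(\theta) p_\theta(u) \dt\theta \dt u$ on $\Theta \times \mathcal{U}$, and to prove it via the standard ``complete-the-KL'' trick: rewrite the objective so that its only $\rho$-dependence sits inside a KL divergence from $\rho$ to the candidate optimiser $\widehat\rho$, then invoke non-negativity of KL.

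First I would dispose of the degenerate case: if $\rho$ is not absolutely continuous with respect to $\mu$, then $\KL\{\rho\|\mu\} = +\infty$ by convention, so the displayed objective is $-\infty$ and any such $\rho$ is suboptimal. Hence restrict attention to $\rho \ll \mu$ with Radon--Nikodym derivative $r := \dt\rho/\dt\mu$, and let $Z := \int_\Theta\int_{\mathcal{U}} e^{h(\theta,u)} \pi(\theta) p_\theta(u) \dt u \dt\theta$, which is finite by hypothesis. The candidate optimiser $\widehat\rho$ is then a well-defined probability measure on $\Theta \times \mathcal{U}$ with density $e^{h}/Z$ relative to $\mu$.

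The decisive manipulation is to decompose the objective as
\begin{IEEEeqnarray*}{rCl}
\int h \, \dt\rho - \KL\{\rho\|\mu\}
& = & \int \log\frac{e^{h}}{r} \dt\rho \\
& = & \int \log\frac{e^{h}/Z}{r} \dt\rho + \log Z \\
& = & -\KL\{\rho\|\widehat\rho\} + \log Z,
\end{IEEEeqnarray*}
where the last equality uses $\dt\widehat\rho/\dt\mu = e^{h}/Z$, so $\dt\rho/\dt\widehat\rho = rZ/e^{h}$. Non-negativity of KL then yields the uniform upper bound $\log Z$, with equality if and only if $\rho = \widehat\rho$, which is exactly the claim.

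The only technical point requiring care is integrability: a priori the objective can be of the form $\infty - \infty$, and one must justify each rewrite. The two hypotheses $\int e^{h}\dt\mu < \infty$ and $\int h \, \dt\rho < \infty$ are precisely what rule this out: the former ensures $Z < \infty$ and hence that $\widehat\rho$ is well-defined, while the latter, combined with $\rho \ll \mu$ and the elementary bound $\log x \leq x - 1$, ensures that $\int \log(e^{h}/r)\,\dt\rho$ is not of indeterminate form. This is the only real obstacle, and it is entirely standard.
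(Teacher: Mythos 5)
Your proposal is correct and is essentially the paper's own argument: the identity $\int h\,\dt\rho-\KL(\rho\|\pi\times p_\theta)=\log Z-\KL(\rho\|\widehat\rho)$ you derive is exactly the paper's key display rearranged, and both proofs conclude by non-negativity of $\KL$ with equality iff $\rho=\widehat\rho$. Your additional remarks on the non-absolutely-continuous case and on integrability are slightly more careful than the paper's treatment but do not change the route.
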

\begin{proof}[Proof of Lemma \ref{lem:restate}]
The proof of Lemma \ref{lem:restate} actually follows the same argument as in Donsker and Varadhan's variational formula but we state this result in full for ease of reference. For all $\rho\in\mathcal{P}(\Theta\times\mathcal{U})$ and $\widehat{\rho}$ as in the statement of the result,
$$
\KL(\rho\|\widehat{\rho})=-\int h(\theta,u)\dt\rho(\theta,u)+\KL(\rho\|\pi\times p_\theta)+\log \E_{(\theta,u)\sim\pi\times p_\theta}\left[e^{h(\theta,u)}\right].
$$ Since $\KL(P\| Q)\ge0$ for any $P,Q$, the minimum of the right hand side over $\rho$ is achieved when $\KL(\rho\|\widehat{\rho})=0$. 
Setting the right hand side equal to zero and re-arranging terms yields
$$
\log 
\E_{(\theta,u)\sim\pi\times p_\theta}[e^{h(\theta,u)}]=\int h(\theta,u)\dt\rho(\theta,u)-\KL(\rho\|\pi\times p_\theta).
$$Since $\KL(\rho\|\widehat{\rho})=0$ if and only if $\rho=\widehat{\rho}$, it immediately follows that 
$$
\log 
\E_{(\theta,u)\sim\pi\times p_\theta}[e^{h(\theta,u)}]=\sup_{\rho\in\mathcal{P}(\Theta\times \mathcal{U})}\left\{\int{h(\theta,u)}\dt\rho(\theta,u)-\KL(\rho\|\pi\times p_\theta)
\right\}.
$$
\end{proof}
\begin{remark}\label{rem:gibbs}\normalfont
Lemma \ref{rem:gibbs} implies that pseudo-marginal posteriors, such as $\overline\pi(\theta\mid\MD_{m,n})$, can actually be viewed as the marginal of a Gibbs measure on the extended space $(\theta,\U)$. To see this, apply Lemma \ref{lem:restate} to $h(\theta,u)=-\lambda\MD_{m,n}(\theta,u)$, for $\lambda>0$, to see that 
\begin{flalign*}
\widehat{\rho}(\theta,u):=&\argsup_{\rho\in\mathcal{P}(\Theta\times\mathcal{U})}\left\{
-\lambda\int\MD_{m,n}(\theta,u)\dt\rho(\theta,u)-\KL\left\{\rho(\theta,u)\|\pi\times p_\theta
\right\}\right\}\\\equiv &\arginf_{\rho\in\mathcal{P}(\Theta\times\mathcal{U})}\left\{
\lambda\int\MD_{m,n}(\theta,u)\dt\rho(\theta,u)+\KL\left\{\rho(\theta,u)\|\pi\times p_\theta \right\}
\right\}\\=&\frac{e^{-\lambda\MD_{m,n}(\theta,u)}\pi(\theta)p_\theta(u)}{\int_\Theta\int_{\mathcal{U}}e^{-\lambda\MD_{m,n}(\theta,u)}\pi(\theta)p_\theta(u)\dt\theta\dt u}.
\end{flalign*}Thus, $\overline\pi(\theta,u\mid \MD_{m,n})$ is a Gibbs measure over $\Theta\times\mathcal{U}$, while our posterior of interest $\overline\pi(\theta\mid\MD_{m,n})=\int_{\mathcal{U}}\overline\pi(\theta,u\mid \MD_{m,n})\dt u$ is nothing but a \textit{marginalised} version of this Gibbs measure.
\end{remark}

\subsection{\Cref{thm:new}}

\thmnew*

\begin{proof}[Proof of \Cref{thm:new}]
The arguments proceed in 5 steps:
\begin{itemize}
    \item[\textbf{Step 1}:] Derive a PAC-Bayes bound for the joint measure $\overline{\pi}(\theta, u\mid \MD_{m,n})$ via  \Cref{lem:restate};
    \item[\textbf{Step 2}:] Marginalise this bound, which allows the bound to apply to $\overline{\pi}(\theta\mid \MD_{m,n})$;
    \item[\textbf{Step 3}.] Obtain convergence rates for the generalisation bound of $\overline{\pi}(\theta, u\mid \MD_{m,n})$ via Step 1;
    \item[\textbf{Step 4}:]
    Using the trick of Step 2, transfer these  rates to $\overline{\pi}(\theta\mid \MD_{m,n})$;
    \item[\textbf{Step 5}:]
    Re-formulate the bound into a posterior concentration result via Markov's inequality.
\end{itemize}

\paragraph{Step 1:}
The start of the proof follows the usual PAC-Bayes recipe, see, e.g., Theorem 2.6 of \cite{alquier2020concentration}, but deviates due to the specific nature of $\MD_{m,n}(\theta,u)$. 
In particular, since the terms in the integrals within Assumption \ref{ass:tails-new} are positive and finite for all values of $\theta$ such that $\pi(\theta)>0$, we can  use Fubini's Theorem to restate Assumption \ref{ass:tails-new} as 
\begin{equation}\label{eq:start}
\E_0\E_{(\theta,u)\sim \pi\times p_\theta}\left[e^{\{\lambda-g(m,n,\lambda)\}\MD(\theta,\theta_0)-\lambda\left\{\MD_{m,n}(\theta,u)-\MD_{n}(\theta_0)\right\}}\right]\le 1,    
\end{equation}
where $\MD(\theta,\theta_0)=\MD(\theta)-\MD(\theta_0)$. Apply  Lemma \ref{lem:restate} with 
$$
h(\theta,u)=\{\lambda-g(m,n,\lambda)\}\MD(\theta,\theta_0)-\lambda\left[\MD_{m,n}(\theta,u)-\MD_{n}(\theta_0)\right]
$$to obtain 
\begin{flalign*}
	&\log\E_{(\theta,u)\sim\pi\times p_\theta}\left[e^{\{\lambda-g(m,n,\lambda)\}\MD(\theta,\theta_0)-\lambda\left\{\MD_{m,n}(\theta,u)-\MD_{n}(\theta_0)\right\}}\right]
	\\&=\sup_{\rho\in\mathcal{P}(\Theta\times \mathcal{U})}\bigg{\{}-\lambda\int\left\{\MD_{m,n}(\theta,u)-\MD_{n}(\theta_0)\right\}\dt\rho(\theta,u)\\&+\{\lambda-g(m,n,\lambda)\}\int \MD(\theta,\theta_0)\dt\rho(\theta,u)-\KL(\rho\|\pi\times p_\theta) \bigg{\}},
\end{flalign*}and apply the above to \eqref{eq:start} to deduce
\begin{flalign*}
	1\ge \E_0\exp\bigg{[} \sup_{\rho\in\mathcal{P}(\Theta\times \mathcal{U})}&\bigg{\{}-\lambda\int\left[\MD_{m,n}(\theta,u)-\MD_{n}(\theta_0)\right]\dt\rho(\theta,u)+\{\lambda-g(m,n,\lambda)\}\int \MD(\theta,\theta_0)\dt\rho(\theta,u)\\&-\KL(\rho\|\pi\times p_\theta) \bigg{\}}\bigg{]},
\end{flalign*}where we recall that $\pi\times p_\theta$ is shorthand notation for the joint distribution $\pi(\theta)\times p_\theta(u)$. Take $\rho=\widehat\rho(\theta,u)\propto e^{-\lambda \MD_{m,n}(\theta,u)}\pi(\theta)p_\theta(u)$ in the above to obtain 
\begin{flalign*}
	1\ge \E_0\exp &\bigg{\{}-\lambda\int\left[\MD_{m,n}(\theta,u)-\MD_{n}(\theta_0)\right]\dt\widehat{\rho}(\theta,u)+\{\lambda-g(m,n,\lambda)\}\int \MD(\theta,\theta_0)\dt\widehat{\rho}(\theta,u)\\&-\KL(\widehat{\rho}\|\pi\times p_\theta) \bigg{\}}.
\end{flalign*}
Applying Jensen's inequality, and taking logs on both sides yields
\begin{flalign*}
	0\ge \E_0&\bigg{\{}-\lambda\int\left[\MD_{m,n}(\theta,u)-\MD_{n}(\theta_0)\right]\dt\widehat{\rho}(\theta,u)+\{\lambda-g(m,n,\lambda)\}\int \MD(\theta,\theta_0)\dt\widehat{\rho}(\theta,u)\\&-\KL(\widehat{\rho}\|\pi\times p_\theta) \bigg{\}},
\end{flalign*}which we can rewrite as 
$$
{\{\lambda-g(m,n,\lambda)\}}\E_0\int\MD(\theta,\theta_0)\dt\widehat{\rho}(\theta,u)\le \E_0\bigg{\{}\lambda\int[\MD_{m,n}(\theta,u)-\MD_n(\theta_0)]\dt\widehat{\rho}(\theta,u)+{\KL(\widehat{\rho}\|\pi\times p_\theta)}\bigg{\}} .
$$
Re-arranging terms and using the fact that 
\begin{flalign*}
\widehat{\rho}:=&
\arginf_{\rho\in\mathcal{P}(\Theta\times\mathcal{U})}\left\{\lambda\int \MD_{m,n}(\theta,u)\dt\rho+\KL(\rho\|\pi\times p_\theta)\right\}
\end{flalign*}
to obtain
\begin{flalign*}
	{\{\lambda-g(m,n,\lambda)\}}&\E_{0}\int\MD(\theta,\theta_0)\dt\widehat{\rho}(\theta,u)\\\le &\E_{0}\bigg{\{}\lambda\int[\MD_{m,n}(\theta,u)-\MD_{n}(\theta_0)]\dt\widehat{\rho}(\theta,u)+{\KL(\widehat{\rho}\|\pi\times p_\theta)}\bigg{\}} 	
	\\=&\E_0\inf_{\rho\in\mathcal{P}(\Theta\times\mathcal{U})}\left\{\lambda\int [\MD_{m,n}(\theta,u)-\MD_n(\theta_0)]\dt\rho+\KL(\rho\|\pi\times p_\theta)\right\}.
\end{flalign*}
Define $f(m,n,\lambda):=\{\lambda-g(m,n,\lambda)\}/\lambda$ and re-arrange terms to obtain
\begin{flalign}
\label{eq:last_new_eq}	f(m,n,\lambda)\E_0\int\MD(\theta,\theta_0)\dt\widehat{\rho}(\theta,u)\le& \E_0\inf_{\rho\in\mathcal{P}(\Theta\times\mathcal{U})}\left\{\int [\MD_{m,n}(\theta,u)-\MD_{n}(\theta_0)]\dt\rho+\frac{\KL(\rho\|\pi\times p_\theta)}{\lambda}\right\}.
\end{flalign}
Note that, from the discussion in \Cref{rem:gibbs},  
$$
\overline\pi(\theta\mid \MD_{m,n})=\frac{\int_{\mathcal{U}}e^{-\MD_{m,n}(\theta,u)}p_\theta(u)\pi(\theta)\dt u}{\int_\Theta\int_{\mathcal{U}}e^{-\MD_{m,n}(\theta,u)}p_\theta(u)\pi(\theta)\dt\theta\dt u}=\int_{\mathcal{U}}\widehat\rho(\theta,u)\dt u.
$$

\paragraph{Step 2:}
Now, Fubini's Theorem implies $\int \MD(\theta,\theta_0)\dt\widehat{\rho}(\theta,u) = \int \MD(\theta,\theta_0)\overline{\pi}(\theta\mid \MD_{m,n})\dt\theta$, so that \eqref{eq:last_new_eq} becomes
\begin{flalign}
f(m,n,\lambda)\E_0\int\MD(\theta,\theta_0)\dt\widehat{\rho}(\theta,u)=&f(m,n,\lambda)\E_0\int\MD(\theta,\theta_0)\overline\pi(\theta\mid\MD_{m,n})\dt\theta\nonumber\\\le& \E_0\inf_{\rho\in\mathcal{P}(\Theta\times\mathcal{U})}\left\{\int [\MD_{m,n}(\theta,u)-\MD_n(\theta_0)]\dt\rho+\frac{\KL(\rho\|\pi\times p_\theta)}{\lambda}\right\}\label{eq:lhs1}.
\end{flalign}
Recall that $\mathcal{A}_{\epsilon_n}:=\{\theta\in\Theta:|\MD(\theta)-\MD(\theta_0)|\le\epsilon_n\}$ and define 
$$
\rho_n(\theta,u)=
\begin{cases}
	\frac{\pi(\theta)p_\theta(u)}{\Pi(\mathcal{A}_{\epsilon_n})}&\text{ if }\theta\in\mathcal{A}_{\epsilon_n}\\0&\text{ else }	
\end{cases}
.$$	Since \eqref{eq:lhs1} is true for the infimum it must hold for any $\rho \in \mathcal{P}(\Theta\times \mathcal{U})$, including $\rho_n$. Hence,
\begin{flalign}
	f(m,n,\lambda)&\E_0\int \MD(\theta,\theta_0)\overline\pi(\theta\mid\MD_{m,n})\dt\theta\nonumber\\\le  &\E_0\left\{\int [\MD_{m,n}(\theta,u)-\MD_{n}(\theta_0)]\dt\rho_n(\theta,u)+\frac{\KL(\rho_n\|\pi\times p_\theta)}{\lambda}\right\}\label{eq:lhs2}.
\end{flalign}

\paragraph{Step 3:}
We now analyze the right hand side of \eqref{eq:lhs1}, which depends on the joint distribution of $(\theta,u)$. Apply Assumption \ref{ass:mean_bias} to see that 
\begin{flalign}
	&\E_0\int [\MD_{m,n}(\theta,u)-\MD_{n}(\theta_0)]\dt\rho_n(\theta,u)\nonumber\\=&\E_0\int [\MD_{m,n}(\theta,u)-\MD_n(\theta)]\dt\rho_n(\theta,u)+\E_0\int [\MD_n(\theta)-\MD_{n}(\theta_0)]\dt\rho_n(\theta,u)\nonumber\\\le &\frac{1}{\Pi(\mathcal{A}_{\epsilon_n})}\left\{\frac{1}{m^\kappa}\int_{\mathcal{A}_{\epsilon_n}} \gamma(\theta)\pi(\theta)\dt\theta+\int_{\mathcal{A}_{\epsilon_n}}\MD(\theta,\theta_0)\pi(\theta)\dt\theta\right\}\label{eq:almost},
\end{flalign}
where the first term in the inequality follows from Fubini's Theorem and Assumption \ref{ass:mean_bias} along with the specific definition of $\rho_n(\theta,u)$;  the second follows from Fubini and the definition of   $\rho_n(\theta,u)$.
To handle the second term in \eqref{eq:almost}, note that on $\mathcal{A}_{\epsilon_n}$, $\MD(\theta,\theta_0)\le\epsilon_n$, so that 
\begin{flalign*}
	\frac{1}{\Pi(\mathcal{A}_{\epsilon_n})}\int_{\mathcal{A}_{\epsilon_n}}\MD(\theta,\theta_0)\pi(\theta)\dt\theta\le \epsilon_n.
\end{flalign*}For the first term, since $\gamma(\theta)$ is continuous by Assumption \ref{ass:mean_bias}, so that $\sup_{\theta\in\mathcal{A}_{\epsilon_n}}\gamma(\theta)<\infty$, and 
\begin{equation*}
	\frac{1}{m^\kappa}\int_{\mathcal{A}_{\epsilon_n}}\frac{\gamma(\theta)\pi(\theta)}{\Pi(\mathcal{A}_{\epsilon_n})}\dt\theta\le \frac{\sup_{\theta\in\mathcal{A}_{\epsilon_n}}\gamma(\theta)}{m^\kappa}\int_{\mathcal{A}_{\epsilon_n}}\frac{\pi(\theta)}{\Pi(\mathcal{A}_{\epsilon_n})}\dt\theta\lesssim\frac{1}{m^\kappa}.
\end{equation*}Plugging the above display equations into \eqref{eq:almost} then delivers
\begin{flalign}
	\E_0\int [\MD_{m,n}(\theta,u)-\MD_{n}(\theta_0)]\dt\rho_n(\theta,u)&\lesssim\epsilon_n+m^{-\kappa}\label{eq:term1}.
\end{flalign}

\paragraph{Step 4:}
Returning to \eqref{eq:lhs1}, and applying \eqref{eq:term1} delivers
\begin{flalign}
	f(m,n,\lambda)&\E_0\int \MD(\theta,\theta_0)\overline\pi(\theta\mid\MD_{m,n})\dt\theta\nonumber\\\le&  \E_0\left\{\int [\MD_{m,n}(\theta,u)-\MD_{n}(\theta_0)]\dt\rho_n(\theta,u)+\frac{\KL(\rho_n\|\pi\times p_\theta)}{\lambda}\right\}\nonumber\\\lesssim &\epsilon_n+\frac{1}{m^\kappa}+\frac{\KL(\rho_n\|\pi\times p_\theta)}{\lambda}\label{eq:lhs3}.
\end{flalign}To handle the third term in equation \eqref{eq:lhs3}, apply Assumption \ref{ass:prior_mass} and the definition of $\rho_n(\theta,u)$, to see that 
$$
\KL(\rho_n\|\pi\times p_\theta)=-\log\Pi(\mathcal{A}_{\epsilon_n})\le n\epsilon_n.
$$Consider taking $\lambda\asymp \min\{m^\kappa, n\}$ so that we then have
\begin{flalign}
	f(m,n,\lambda)&\E_0\int \MD(\theta,\theta_0)\overline\pi(\theta\mid\MD_{m,n})\dt\theta\nonumber\\\le&  \E_0\left\{\int [\MD_{m,n}(\theta,u)-\MD_{n}(\theta_0)]\dt\rho_n(\theta,u)+\frac{\KL(\rho_n\|\pi\times p_\theta)}{\lambda}\right\}\nonumber\\\lesssim &\epsilon_n+\frac{1}{m^\kappa}+\frac{n\epsilon_n}{\min\{m^\kappa, n\}}\label{eq:almost_final_term}.
\end{flalign}
Equation \eqref{eq:almost_final_term} immediately implies that we must choose $m$ so that $m\gtrsim n^{1/\kappa}$ to control 
$\E_0\int \MD(\theta,\theta_0)\overline\pi(\theta\mid\MD_{m,n})\dt\theta$.

Take $m\asymp n^{1/\kappa}$ so that $\lambda\asymp n$, and recall that 
$
f(m,n,\lambda)=\frac{\lambda-g(m,n,\lambda)}{\lambda}
$. Since $\lambda-g(m,n,\lambda)>0$ by Assumption \ref{ass:tails-new}, choose $\lambda$ such that
$
\frac{\lambda-g(m,n,\lambda)}{\lambda}<1/K
$ for some $K>0$. We can then write \eqref{eq:almost_final_term} as
\begin{flalign}
	&\E_0\int \MD(\theta,\theta_0)\overline\pi(\theta\mid\MD_{m,n})\dt\theta\nonumber\\\le &K  \E_0\left\{\int [\MD_{m,n}(\theta,u)-\MD_{n}(\theta_0)]\dt\rho_n(\theta,u)(\theta,u)+\frac{\KL(\rho_n\|\pi\times p_\theta)}{\lambda}\right\}\nonumber\\\lesssim &2\epsilon_n+\frac{1}{m^\kappa}\label{eq:final_term}.
\end{flalign}

\paragraph{Step 5:}
The result now follows by applying Markov's inequality. In particular, from equation \eqref{eq:final_term} we see that
\begin{flalign*}
\E_0\overline\Pi\left[|\MD(\theta)-\MD(\theta_0)|>M_n\epsilon_n\mid\MD_{m,n}\right]\le& \frac{\E_0\int_\Theta |\MD(\theta)-\MD(\theta_0)|\overline\pi(\theta\mid\MD_{m,n})\dt\theta}{M_n\epsilon_n}\\\lesssim& \frac{1}{M_n}\left[\frac{2\epsilon_n}{\epsilon_n}+\frac{1}{m^\kappa\epsilon_n}\right].    
\end{flalign*}
Hence, so long as $m\asymp n^{1/\kappa}$, and $n\epsilon_n\rightarrow\infty$, the stated result follows.

\end{proof}

\section{Zig-zag results and implementation details}

This lays the groundwork for the proof of Theorem \ref{thm:StationaryDist_ZigZag}, establishes that Algorithm \ref{Alg:ZigZag_EstimatedLoss} samples from $\pi(\theta \mid \MD_n)$,  elaborates on the implementation of Algorithm \ref{Alg:ZigZag_EstimatedLoss} for the $\beta$-divergence and MMD loss functions, and covers two straightforward and practical extensions of the simpler version of the algorithm presented in the main paper.

\subsection{Proofs and technical results}

\subsubsection{Technical Lemmas}

For completeness, we restate the Poisson thinning result of \cite{lewis1979simulation}.
This is a classical result, and required for the proof of Theorem \ref{thm:StationaryDist_ZigZag}.

\begin{lemma}[Poisson thinning \cite{lewis1979simulation}]
     Let $\lambda:
\mathbb{R}_{+}\mapsto \mathbb{R}_{+}$ and $\Lambda: \mathbb{R}_{+} \mapsto \mathbb{R}_{+}$ be continuous such that $\lambda(t) \leq \Lambda(t)$ for $t \geq 0$.
Let $\tau_1, \tau_2, \ldots$ be the increasing finite or infinite sequence of points sampled from a Poisson process with rate function $(\Lambda(t))_{t\geq 0}$. For all $i$, delete the point $\tau_i$ with probability $1 -\lambda(\tau_i)/ \Lambda(\tau_i)$. Then the remaining points, say  $\tilde{\tau}_1, \tilde{\tau}_2, \ldots$, are distributed according to a Poisson process with rate function $(\lambda(t))_{t\geq 0}$.
\label{Prop:Thinning}
\end{lemma}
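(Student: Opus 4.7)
The plan is to verify the two defining properties of an inhomogeneous Poisson process for the thinned collection $\{\tilde{\tau}_i\}$: namely, that for any finite collection of disjoint measurable sets $A_1,\dots,A_k \subset \mathbb{R}_+$, the retained counts $\tilde{N}(A_1),\dots,\tilde{N}(A_k)$ are independent, with $\tilde{N}(A_j) \sim \operatorname{Poisson}\bigl(\int_{A_j}\lambda(t)\,\dt t\bigr)$. Since $(\tau_i)$ is a Poisson process of rate $\Lambda$, the unthinned counts $N(A_j)$ already satisfy the analogous property with $\Lambda$ in place of $\lambda$, and this will be my starting point.

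First I would work on a single set $A$ with $0 < \int_A \Lambda(t)\,\dt t < \infty$. Conditional on $N(A)=n$, it is a standard consequence of the Poisson-process construction that the points in $A$ are distributed as $n$ i.i.d.\ draws from the density $\Lambda(t)/\int_A \Lambda(s)\,\dt s$ on $A$. Each such point is independently retained with probability $\lambda(\tau_i)/\Lambda(\tau_i)$ by the thinning rule, so the marginal retention probability for a single point is
\begin{equation*}
p(A) \;=\; \int_A \frac{\lambda(t)}{\Lambda(t)}\cdot\frac{\Lambda(t)}{\int_A \Lambda(s)\,\dt s}\,\dt t \;=\;\frac{\int_A \lambda(t)\,\dt t}{\int_A \Lambda(t)\,\dt t}.
\end{equation*}
Hence $\tilde{N}(A)\mid N(A)=n \sim \operatorname{Binomial}(n,p(A))$. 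A direct summation
\begin{equation*}
\mathbb{P}(\tilde{N}(A)=k) = \sum_{n\geq k}\binom{n}{k}p(A)^k(1-p(A))^{n-k}\cdot\frac{e^{-\int_A\Lambda}\bigl(\int_A\Lambda\bigr)^n}{n!}
\end{equation*}
then collapses via the usual Poisson-thinning identity to $e^{-\int_A\lambda}\bigl(\int_A\lambda\bigr)^k/k!$, which is the desired Poisson mass function.

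For independence across disjoint $A_1,\dots,A_k$, I would use the same conditioning argument jointly: given $N(A_1)=n_1,\dots,N(A_k)=n_k$, the points are independent across sets (a defining property of the underlying Poisson process), and the retention decisions are independent across points by construction. So the joint conditional law of $(\tilde{N}(A_1),\dots,\tilde{N}(A_k))$ is a product of binomials, and multiplying by the product-of-Poissons joint law of $(N(A_1),\dots,N(A_k))$ and summing factorises into the product of the marginals computed above. This yields the required joint Poisson law with independent coordinates, completing the verification.

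The main technical point to handle carefully is the measurability and integrability of the construction when $\int_A\Lambda=\infty$, and the edge case $\Lambda(t)=0$ in the ratio $\lambda(\tau_i)/\Lambda(\tau_i)$; both are routine but must be addressed. A secondary subtlety is justifying that the independent Bernoulli marks $B_i = \mathbf{1}\{U_i \le \lambda(\tau_i)/\Lambda(\tau_i)\}$ (with $U_i\sim\mathrm{Unif}(0,1)$ i.i.d., a convenient construction realising the stated deletion rule) generate a well-defined marked Poisson process to which the standard marking/coloring theorem applies; I would briefly reduce the claim to this framework as an alternative presentation if space permits, since it makes the independence statement essentially automatic. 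The bulk of the work is the single-set Poisson-binomial identity above; everything else is bookkeeping.
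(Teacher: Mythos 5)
Your argument is correct, but note that the paper does not actually prove this lemma: it is restated verbatim as a classical result and attributed to Lewis and Shedler (1979), so there is no in-paper proof to compare against. What you have written is the standard derivation of the thinning (colouring/marking) theorem: conditional uniformity of the points given the count, the marginal retention probability $p(A)=\int_A\lambda\big/\int_A\Lambda$, the Poisson--binomial mixture identity collapsing to a Poisson law with mean $\int_A\lambda$, and independence over disjoint sets inherited from the underlying process together with the independent retention marks. All of these steps are sound, and the edge cases you flag (sets of infinite $\Lambda$-measure, which are handled by restricting to bounded sets since continuity of $\Lambda$ gives local integrability, and the ratio $\lambda/\Lambda$ at points where $\Lambda$ vanishes, where almost surely no points of the process land) are indeed routine. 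The only stylistic remark is that the lemma is phrased in terms of the ordered point sequence rather than counting measures, but the two descriptions of a point process are equivalent, so your reformulation loses nothing. In short: a complete and correct proof of a result the paper only cites.
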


Additionally, we restate a result and its conditions from \cite{bierkens2019zig}  required for the proof of Theorem \ref{thm:StationaryDist_ZigZag}. We refer to Section \ref{sec:PDMP} a definition of zig-zag processes.

\begin{assumption}[Assumption 2.1 of \cite{bierkens2019zig}]{\label{ass:bierkens}}
For $C^1(\mathbb{R}^d)$ is the space of continuously differentiable functions on $\mathbb{R}^d$ and some function $\Psi\in C^1(\mathbb{R}^d)$  satisfying
\begin{IEEEeqnarray}{rCl}
    \int_{\mathbb{R}^d}\exp(-\Psi(\theta))d\theta < \infty,\nonumber
\end{IEEEeqnarray}
we have zig-zag Poisson intensity functions $\lambda_j: \mathbb{R}_{+}\times \mathbb{R}^d\times \{-1,1\}\mapsto \mathbb{R}_{+}$ such that
\begin{IEEEeqnarray}{rCl}
    \lambda_j(\theta, \nu) - \lambda_j(\theta, S_j(\nu)) = \nu_j\frac{\partial}{\partial \theta_j} \Psi(\theta)\textrm{ for all } \theta\in\mathbb{R}^d, \nu\in \{-1, 1\}^d, j = 1,\ldots, d, \nonumber
\end{IEEEeqnarray} 
where $S_j:\{-1, 1\}^d\mapsto\{-1,1\}^d$ such that $\{S_j(\nu)\}_j = -\nu_j$ and $\{S_j(\nu)\}_{j^\prime} = \nu_{j^\prime}$ for all $j^\prime\neq j$.
\end{assumption}


\begin{theorem}[Theorem 2.2 of \cite{bierkens2019zig}]{\label{thm:bierkens}}
    Suppose Assumption \ref{ass:bierkens} holds. For a probability density on $\mathbb{R}^d$ given by
    \begin{IEEEeqnarray}{rCl}
        \rho(\theta) = \frac{\exp(-\Psi(\theta))}{\int_{\mathbb{R}^d} \exp(-\Psi(\theta))d\theta},\label{eq:zig-zag-stationary-dist}
    \end{IEEEeqnarray}
    the invariant distribution of the zig-zag process with switching rates $\lambda_j$ for $j = 1,\ldots, d$ after marginalisation over the velocities $\nu$ has the density  $\rho$ over $\Theta$ defined as in \eqref{eq:zig-zag-stationary-dist}.
\end{theorem}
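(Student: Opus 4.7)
The plan is to prove invariance by the standard PDMP generator identity on the extended state space $\mathbb{R}^d \times \{-1,1\}^d$, then marginalise over velocities. Let $\mathcal{V} = \{-1,1\}^d$ and let $\mu$ be the uniform measure on $\mathcal{V}$. Define the candidate invariant joint measure as $\pi(d\theta, d\nu) = \rho(\theta)\, d\theta \otimes \mu(d\nu)$, so that its marginal over $\nu$ is $\rho$, which is precisely what the theorem asserts. The strategy is to show that the extended generator $\mathcal{L}$ of the zig-zag PDMP satisfies $\int \mathcal{L} f \, d\pi = 0$ for all $f$ in a sufficiently rich class of test functions (e.g.\ compactly supported $C^1$ functions on $\mathbb{R}^d$ at every fixed $\nu$), which by standard PDMP theory (e.g.\ Davis, \emph{Markov Models and Optimization}) is equivalent to invariance of $\pi$.

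The generator of the zig-zag process, given deterministic dynamics $\dot\theta = \nu$ and component-wise velocity flips at rates $\lambda_j$, acts as
\begin{equation*}
(\mathcal{L} f)(\theta, \nu) = \sum_{j=1}^d \nu_j \frac{\partial f}{\partial \theta_j}(\theta, \nu) + \sum_{j=1}^d \lambda_j(\theta, \nu)\bigl[f(\theta, S_j(\nu)) - f(\theta, \nu)\bigr].
\end{equation*}
Integrating $(\mathcal{L} f) \cdot \rho(\theta)/2^d$ against $d\theta\, d\nu$ and splitting into the drift and jump contributions, I would perform integration by parts on the drift term using $\nabla_\theta \rho = -\rho \cdot \nabla_\theta \Psi$ (exploiting the integrability $\int \exp(-\Psi) < \infty$ to kill boundary contributions), and on the jump term I would apply the change of variables $\nu \mapsto S_j(\nu)$ in the first summand (using the symmetry of $\mu$ under $S_j$). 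After collecting terms, the two contributions combine into
\begin{equation*}
\int f(\theta, \nu)\, \frac{\rho(\theta)}{2^d} \left\{ \sum_{j=1}^d \nu_j \frac{\partial \Psi}{\partial \theta_j}(\theta) - \sum_{j=1}^d \bigl[\lambda_j(\theta, \nu) - \lambda_j(\theta, S_j(\nu))\bigr] \right\} d\theta\, d\nu.
\end{equation*}
Invoking Assumption \ref{ass:bierkens} term by term, the curly bracket collapses to zero pointwise in $(\theta, \nu)$, so the integral vanishes for every admissible $f$. This yields invariance of $\pi$, and the claim of the theorem follows by integrating out $\nu$.

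The main obstacle I anticipate is technical rather than conceptual: specifying the domain of the extended generator precisely enough so that (i) the integration by parts is fully justified, and (ii) the class of test functions is large enough to separate measures and hence allow one to pass from $\int \mathcal{L} f \, d\pi = 0$ to genuine stationarity of the semigroup. Both of these ultimately require mild regularity on the rates $\lambda_j$ (continuity suffices to ensure non-explosion via \cite{bierkens2019zig}, which guarantees that the skeleton defines a well-posed Markov process with càdlàg paths) and on $\Psi \in C^1(\mathbb{R}^d)$ together with integrability of $\exp(-\Psi)$, both of which are already imposed in Assumption \ref{ass:bierkens}. Given these, the identity above closes the argument, and this is essentially the argument in the original proof by \cite{bierkens2019zig}, which the theorem re-states.
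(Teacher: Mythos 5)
This statement is not proved in the paper at all: it is quoted verbatim as Theorem 2.2 of \cite{bierkens2019zig} and used as an imported black box in the proof of Theorem \ref{thm:StationaryDist_ZigZag}, so there is no in-paper argument to compare against. Your sketch --- writing the extended generator, integrating against $\rho(\theta)\,d\theta\otimes\mu(d\nu)$, integrating by parts on the drift term via $\nabla_\theta\rho=-\rho\,\nabla_\theta\Psi$, and applying the involution $\nu\mapsto S_j(\nu)$ to the jump term so that Assumption \ref{ass:bierkens} cancels the bracket pointwise --- is the standard argument behind the cited result, and the algebra checks out; the only substantive work you defer (identifying a core for the extended generator and upgrading $\int\mathcal{L}f\,d\pi=0$ to genuine stationarity) is precisely where the original reference spends its effort, so you are right to flag it rather than claim it for free.
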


\subsubsection{Proof of Theorem \ref{thm:StationaryDist_ZigZag}}\label{sec:ProofStationaryDist_ZigZag}

The proof of Theorem \ref{thm:StationaryDist_ZigZag} closely follows the proof of Theorem 4.1 in \cite{bierkens2019zig}. We divide the required results into two Lemmas for simplicity.
Lemma \ref{Lem:switching} follows the first part of Theorem 4.1 in \cite{bierkens2019zig} and establishes the effective switching rates, $\mathbb{P}\left(\nu_j \mapsto -\nu_{j}\mid \theta\right)$, of the zig-zag algorithm presented in Algorithm \ref{Alg:ZigZag_EstimatedLoss}.

%
%
%



\begin{lemma}[Effective switching rates of Algorithm \ref{Alg:ZigZag_EstimatedLoss}] Assume that 
$\hat{\Lambda}_j(\theta, \nu)$ satisfies \eqref{eq:computational-bound-estimated}. The effective switching rates, $\tilde{\lambda}_j(\theta, \nu) := \mathbb{P}\left(\nu_j \mapsto -\nu_{j}\mid \theta\right)$, of Algorithm \ref{Alg:ZigZag_EstimatedLoss} are
%
%
%
%
    \begin{IEEEeqnarray}{rCL}
        \tilde{\lambda}_j(\theta, \nu) &=& \mathbb{E}_{u_{1:b}\sim P_{\theta}}\left[\max\left\lbrace \nu_j \cdot \left[\Psi_{b,n}'(\theta)\right]_j, 0\right\rbrace\right],~j =1,\ldots, d. \label{Equ:SwitchingRate}
    \end{IEEEeqnarray}
    {\label{Lem:switching}}
\end{lemma}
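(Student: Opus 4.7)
The plan is to combine a conditional Poisson-thinning argument with an outer marginalisation over the simulated batch $u_{1:b}$. Algorithm \ref{Alg:ZigZag_EstimatedLoss} constructs its candidate switching times from Poisson processes with the (deterministic) rate functions $\hat{\Lambda}_j(\theta,\nu)$, which dominate $\hat{\lambda}_j(\theta,\nu)$ pointwise by the computational-upper-bound assumption \eqref{eq:computational-bound-estimated}. At every accepted candidate time, a fresh batch $u_{1:b}\sim P_\theta$ is drawn, and the proposal is kept with probability $\hat{p}_{k+1} = \hat{\lambda}_{j^*}(\theta,\nu)/\hat{\Lambda}_{j^*}(\theta,\nu)$. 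So I would first condition on the $\sigma$-algebra generated by $u_{1:b}$: under that conditioning, $\hat{\lambda}_j$ is a deterministic function of $(\theta,\nu)$ and Lemma \ref{Prop:Thinning} applies directly, giving a conditional effective switching rate equal to $\hat{\lambda}_j(\theta,\nu)$.

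Second, I would pass to the unconditional rate. Because a new independent draw of $u_{1:b}$ is made at each candidate time, and because $\hat{\Lambda}_j$ does not depend on $u_{1:b}$, the unconditional probability that a candidate is accepted is
\begin{equation*}
\mathbb{E}_{u_{1:b}\sim P_\theta}\!\left[\frac{\hat{\lambda}_j(\theta,\nu)}{\hat{\Lambda}_j(\theta,\nu)}\right] \;=\; \frac{\mathbb{E}_{u_{1:b}\sim P_\theta}[\hat{\lambda}_j(\theta,\nu)]}{\hat{\Lambda}_j(\theta,\nu)} .
\end{equation*}
A standard independence-based thinning argument (applying Lemma \ref{Prop:Thinning} in expectation, or equivalently viewing each retained candidate as a Bernoulli trial independent across candidate times) then shows that the effective switching rate of the coordinate $j$ is exactly
\begin{equation*}
\tilde{\lambda}_j(\theta,\nu) \;=\; \hat{\Lambda}_j(\theta,\nu)\cdot \frac{\mathbb{E}_{u_{1:b}\sim P_\theta}[\hat{\lambda}_j(\theta,\nu)]}{\hat{\Lambda}_j(\theta,\nu)} \;=\; \mathbb{E}_{u_{1:b}\sim P_\theta}\!\left[\max\{\nu_j\cdot[\Psi_{b,n}'(\theta)]_j,0\}\right],
\end{equation*}
which is the claim in \eqref{Equ:SwitchingRate}.

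The main obstacle, and the step requiring the most care, is justifying the interchange of the Poisson-thinning operation with the averaging over the independent draws of $u_{1:b}$ at successive candidate times. Two issues must be handled: (i) the independence of the Bernoulli acceptances across distinct candidate times, which follows because each candidate triggers an independent fresh simulation $u_{1:b}\sim P_\theta$ and the candidate Poisson process is independent of these simulations; (ii) the measurability and integrability needed for the identity $\mathbb{E}_{u_{1:b}}[\hat{\lambda}_j/\hat{\Lambda}_j] = \mathbb{E}_{u_{1:b}}[\hat{\lambda}_j]/\hat{\Lambda}_j$, which is immediate since $\hat{\Lambda}_j(\theta,\nu)$ does not depend on $u_{1:b}$ by construction (this is exactly why \eqref{eq:computational-bound-estimated} is stated to hold uniformly in $u_{1:b}$). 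Once these two points are addressed, the result follows directly, and the formula \eqref{Equ:SwitchingRate} is the natural analogue of the effective-rate identity underlying Theorem 4.1 in \citet{bierkens2019zig} for the subsampling variant.
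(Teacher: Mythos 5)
Your proposal is correct and follows essentially the same route as the paper: compute the unconditional acceptance probability by averaging $\hat{\lambda}_{j}/\hat{\Lambda}_{j}$ over the fresh batch $u_{1:b}\sim P_{\theta}$, pull the $u_{1:b}$-independent bound $\hat{\Lambda}_{j}$ out of the expectation, and invoke Lemma \ref{Prop:Thinning} to identify the effective rate as $\mathbb{E}_{u_{1:b}\sim P_{\theta}}[\hat{\lambda}_{j}(\theta,\nu)]$. The only cosmetic quibble is your opening step of "conditioning on the $\sigma$-algebra generated by $u_{1:b}$" across the whole process, which is not quite coherent since an independent batch is regenerated at every candidate event; your second paragraph already supersedes this and matches the paper's marginalisation argument.
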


\begin{proof}

Start from the previous skeleton point $(\tau^k, \theta^{(\tau^k)}, \nu^{(\tau^k)})$. Conditional on  $\tau_{j^*} = \argmin_j\tau_j$ with $\tau_j \sim \mathbb{P}(\tau_j \geq t) = \exp\left\{-\int_0^t\hat{\Lambda}_j(\theta^{(\tau^{k})} + \nu^{(\tau^{k})}\cdot s, \nu^{(\tau^{k})}) ds\right\}$ for $j=1,2,\dots, d$, the probability that dimension $j^{\ast}$ of $\theta$ has its velocity  `\textit{switched}' so that $\nu^{(\tau^{k+1})}_{j^{\ast}} = -\nu^{(\tau^{k})}_{j^{\ast}}$ at time $\tau^{k+1} = \tau^k + \tau_{j^*}$ is seen to be 
    %
    %
    %
\begin{IEEEeqnarray}{rCl}   &&\mathbb{P}\left(\nu_{j^{\ast}}^{(\tau^{k+1})} = 
 -\nu_{j^{\ast}}^{(\tau^{k})}\mid \theta^{(\tau^{k+1})}\right) 
 \nonumber \\
& = & \int \mathbb{P}\left(\nu_{j^{\ast}}^{(\tau^{k+1})} = -\nu_{j^{\ast}}^{(\tau^{k})}\mid \theta^{(\tau^{k+1})}, u_{1:b}\right)dP_{\theta^{(\tau^{k})} + \nu^{(\tau^{k})}\cdot\tau_{j^*}}(u_{1:b})
\nonumber\\
&=& \mathbb{E}_{u_{1:b} \sim P_{\theta^{(\tau^{k})} + \nu^{(\tau^{k})}\cdot\tau_{j^*}}}\left[\frac{\hat{\lambda}_{j^{\ast}}(\theta^{(\tau^{k})} + \nu^{(\tau^{k})}\cdot\tau_{j^*}, \nu^{(\tau^{k})})}{\hat{\Lambda}_{j^{\ast}}(\theta^{(\tau^{k})} + \nu^{(\tau^{k})}\cdot\tau_{j^*}, \nu^{(\tau^{k})})}\right]
\nonumber\\
&=& \frac{1}{\hat{\Lambda}_{j^{\ast}}(\theta^{(\tau^{k})} + \nu^{(\tau^{k})}\cdot\tau_{j^*}, \nu^{(\tau^{k})})}\mathbb{E}_{u_{1:b} \sim P_{\theta^{(\tau^{k})} + \nu^{(\tau^{k})}\cdot\tau_{j^*}}}\left[\max\left\lbrace\nu^{(\tau^k)}_{j^{\ast}} \cdot \left[\Psi_{b,n}'(\theta^{(\tau^k)} + \nu^{(\tau^k)}\cdot \tau_{j^*})\right]_{j^{\ast}}, 0\right\rbrace\right]\nonumber
    \end{IEEEeqnarray}
    which by Poisson Thinning \citep{lewis1979simulation} (Lemma \ref{Prop:Thinning}) gives
    \begin{align}
        \tilde{\lambda}_j(\theta, \nu) &= \mathbb{E}_{u_{1:b}\sim P_{\theta}}\left[\max\left\lbrace\nu_{j} \cdot \left[\Psi_{b,n}'(\theta)\right]_{j}, 0, \right\rbrace\right],~ \nonumber
    \end{align}
     as the effective switching rate for the $j^{th}$ component of $\theta$.
\end{proof}

Lemma \ref{Lem:switching_correct}
follows the second part of the proof of Theorem 4.1 in \cite{bierkens2019zig} and verifies that the effective switching rates in \eqref{Equ:SwitchingRate} satisfy Assumption \ref{ass:bierkens}.

\begin{lemma}[Effective switching rates of Algorithm \ref{Alg:ZigZag_EstimatedLoss} satisfy Assumption \ref{ass:bierkens}]{\label{Lem:switching_correct}}
Assume that $\varphi_{b,n}(\theta)$ satisfies \eqref{eq:needed}. Then the effective switching rates  in
\eqref{Equ:SwitchingRate} satisfy Assumption \ref{ass:bierkens}.
\end{lemma}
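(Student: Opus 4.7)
The plan is to verify Assumption \ref{ass:bierkens} directly for the potential
$\Psi_n(\theta) = -\log \pi(\theta) + \omega \MD_n(\theta)$, which is the natural choice since it gives $\pi(\theta \mid \MD_n) \propto \exp(-\Psi_n(\theta))$ (modulo normalisation), and so by Theorem \ref{thm:bierkens} would then imply that the invariant distribution of the zig-zag process with switching rates $\tilde\lambda_j$ equals $\pi(\theta\mid\MD_n)$. Assumption \ref{ass:bierkens} also requires that $\Psi_n \in C^1(\mathbb{R}^d)$ and that $\pi(\theta\mid\MD_n)$ is integrable, both of which we assume as standing regularity conditions on the prior and the loss.

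The first step is a simple bookkeeping identity exploiting the unbiasedness assumption \eqref{eq:needed}. Since $\Psi_{b,n}'(\theta) = -\frac{\partial}{\partial\theta}\log\pi(\theta) + \omega\varphi_{b,n}(\theta)$, taking the expectation over $u_{1:b}\sim P_\theta$ and applying \eqref{eq:needed} yields
\begin{equation*}
\mathbb{E}_{u_{1:b}\sim P_\theta}\bigl[\Psi_{b,n}'(\theta)\bigr] \;=\; -\frac{\partial}{\partial\theta}\log\pi(\theta) + \omega\frac{\partial}{\partial\theta}\MD_n(\theta) \;=\; \frac{\partial}{\partial\theta}\Psi_n(\theta).
\end{equation*}
That is, $\Psi_{b,n}'$ is an unbiased estimator of $\nabla\Psi_n$, which is precisely the property one needs to transport the argument of \cite{bierkens2019zig} to our estimated setting.

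The second and only substantive step is to combine this with the elementary identity $\max\{a,0\} - \max\{-a,0\} = a$, applied pointwise to $a = \nu_j[\Psi_{b,n}'(\theta)]_j$. Using \eqref{Equ:SwitchingRate} and the fact that $S_j$ only flips the $j$-th component of $\nu$,
\begin{align*}
\tilde\lambda_j(\theta,\nu) - \tilde\lambda_j(\theta,S_j(\nu))
&= \mathbb{E}_{u_{1:b}\sim P_\theta}\Bigl[\max\{\nu_j[\Psi_{b,n}'(\theta)]_j,0\} - \max\{-\nu_j[\Psi_{b,n}'(\theta)]_j,0\}\Bigr] \\
&= \mathbb{E}_{u_{1:b}\sim P_\theta}\bigl[\nu_j[\Psi_{b,n}'(\theta)]_j\bigr]
= \nu_j\frac{\partial}{\partial\theta_j}\Psi_n(\theta),
\end{align*}
where the last equality uses the unbiasedness identity from the previous step and linearity of expectation. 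This is exactly the condition required by Assumption \ref{ass:bierkens}, which completes the verification and (via Theorem \ref{thm:bierkens}) also completes the proof of Theorem \ref{thm:StationaryDist_ZigZag}.

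There is no real obstacle in this argument, which is essentially an observation: the fact that the zig-zag rate enters only through the difference $\tilde\lambda_j(\theta,\nu) - \tilde\lambda_j(\theta,S_j(\nu))$ means that only the first moment of $\varphi_{b,n}$ matters, so \eqref{eq:needed} is exactly the right condition. The only mild subtlety worth being careful about is the exchange of expectation and the $\max\{\cdot,0\}$ operation, which is valid because we may write $\max\{a,0\} - \max\{-a,0\} = a$ \emph{before} taking expectation on the integrand, so no interchange of limits is actually needed---simply linearity of integration applied to a (pointwise) integrable expression.
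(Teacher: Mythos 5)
Your proof is correct and follows essentially the same route as the paper's: both verify Assumption \ref{ass:bierkens} via the pointwise identity $\max\{a,0\}-\max\{-a,0\}=a$ applied to $a=\nu_j[\Psi_{b,n}'(\theta)]_j$, then use linearity of expectation together with \eqref{eq:needed} to identify the result with $\nu_j\frac{\partial}{\partial\theta_j}\Psi_n(\theta)$. Your remark that only the first moment of $\varphi_{b,n}$ matters, and that no expectation--max interchange is needed, is a nice clarification but does not change the argument.
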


\begin{proof}
The effective switching rates in \eqref{Equ:SwitchingRate} are such that
\begin{align}
        \tilde{\lambda}_j(\theta, \nu) - \tilde{\lambda}_j(\theta, S_j[\nu])
        &= \mathbb{E}_{u_{1:b} \sim P_{\theta}}\left[\max\left\lbrace\nu_j \cdot \left[\Psi_{b,n}'(\theta)\right]_j, 0 \right\rbrace\right]\nonumber\\
        &\quad -\mathbb{E}_{u_{1:b} \sim P_{\theta}}\left[\max\left\lbrace-\nu_j \cdot \left[\Psi_{b,n}'(\theta)\right]_j, 0 \right\rbrace\right]\nonumber\\
        &= \mathbb{E}_{u_{1:b} \sim P_{\theta}}\left[\nu_j \cdot \left[\Psi_{b,n}'(\theta)\right]_j\right]\nonumber\\
        &= \mathbb{E}_{u_{1:b} \sim P_{\theta}}\left[\nu_j\left(-\frac{\partial}{\partial\theta}\log \pi(\theta) + \omega  \varphi_{b,n}(\theta)\right)\right]\nonumber\\
        &=-\nu_j\left(-\frac{\partial}{\partial\theta}\log \pi(\theta) + \omega \mathbb{E}_{u_{1:b} \sim P_{\theta}}\left[\varphi_{b,n}(\theta)\right]\right)\nonumber\\
        &= \nu_j\frac{\partial}{\partial{\theta_j}}\Psi_n\left(\theta\right),\nonumber
    \end{align}
    by \eqref{eq:needed}. Therefore, $\tilde{\lambda}_j(\theta, \nu)$ satisfies Assumption \ref{ass:bierkens} as required.
\end{proof}

We now restate and  prove Theorem \ref{thm:StationaryDist_ZigZag}. 

\thmzigzag*

\begin{proof}
%
%
%
    Lemma \ref{Lem:switching_correct} verified that the effective switching rates of Algorithm \ref{Alg:ZigZag_EstimatedLoss}, established in Lemma \ref{Lem:switching}, satisfy Assumption \ref{ass:bierkens}. Therefore by Theorem \ref{thm:bierkens}, the zig-zag process has invariant marginal density
    \begin{align}
        \pi(\theta \mid \MD_n) = \frac{\exp(-\Psi_n(\theta))}{\int\exp(-\Psi_n(\theta))d\theta}\nonumber
    \end{align}
    for $\theta$, as required.
\end{proof}

\subsection{Practical Implementations and Computational bounds}{\label{sec:zigzag_implementation}}

Following on from Section \ref{sec:constructing_grad}, we discuss how to construct computational bounds $\hat{\Lambda}_j$ satisfying 
\eqref{eq:computational-bound-estimated} for Algorithm \ref{Alg:ZigZag_EstimatedLoss} when using the $\beta$-divergence or MMD losses.
%
%
For our setting, the main  challenge is to find a bound on $|\varphi_{b, n}(\theta)|$ that holds uniformly for all $u_{1:b}$.
While the exact form of such bounds will depend on both the loss and the model, we provide generic conditions that simplify the problem and exemplify these on the examples we treat in Section \ref{sec:examples}. 

\subsubsection{$\beta$-divergence}{\label{sec:betaDappendix}}

Built on the discussion in Section \ref{sec:constructing_grad}, Lemma \ref{lem:betaD_direct} provides $\varphi^{\beta}_{m, n}(\theta)$ for $\MD_{m,n}^{\beta}$, and shows that a generic computational bound can be constructed via $\left|\frac{\partial}{\partial{\theta_j}} \log p_\theta(u) p^\beta_\theta(u)\right|\leq R_j(\theta,\beta)$.

\begin{lemma}\label{lem:betaD_direct}
 For the $\beta$-divergence loss, taking 
    \begin{align}
&\varphi^{\beta}_{m, n}(\theta)=\frac{(\beta + 1)}{bn}\sum_{i=1}^n\sum_{k=1}^b\frac{\partial}{\partial \theta} \log p_{\theta}(u_k)p_{\theta}(u_k)^{\beta} - \frac{(\beta + 1)}{n}\sum_{i=1}^n\frac{\partial}{\partial \theta} \log p_{\theta}(y_i)p_{\theta}(y_i)^{\beta}\nonumber
\end{align} for $u_{1:b} \sim P_{\theta}$ satisfies \eqref{eq:needed}. 
Further, if there exists $R_j(\theta,\beta) \geq
\left|\frac{\partial}{\partial{\theta_j}} \log p_\theta(u) p^\beta_\theta(u)\right|
$, then
\begin{flalign*}
    \hat{\lambda}_{j}(\theta,\nu) \leq& -\nu_j\frac{\partial}{\partial\theta_j}\log(\pi(\theta)) +\frac{\omega(\beta + 1)}{n}\sum_{i=1}^nR_j(\theta,\beta)- \frac{\nu_j\omega(\beta + 1)}{n}\sum_{i=1}^n\frac{\partial}{\partial{\theta_j}} \log p_{\theta}(y_i)p_{\theta}(y_i)^{\beta}.
\end{flalign*}
\end{lemma}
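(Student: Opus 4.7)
The plan is to treat the two claims separately. For the unbiasedness requirement in \eqref{eq:needed}, note that the data-dependent second sum of $\varphi^\beta_{b,n}(\theta)$ does not depend on $u_{1:b}$ and passes through the expectation unchanged, while the first sum $\frac{\beta+1}{bn}\sum_{i=1}^n\sum_{k=1}^b \frac{\partial}{\partial\theta}\log p_\theta(u_k)p_\theta(u_k)^\beta$ collapses (since its summand is $i$-independent) to the Monte Carlo average $\frac{\beta+1}{b}\sum_{k=1}^b \frac{\partial}{\partial\theta}\log p_\theta(u_k)p_\theta(u_k)^\beta$. Taking expectation over $u_{1:b}\stackrel{iid}{\sim}P_\theta$, this reduces to $(\beta+1)\,\E_{u\sim P_\theta}[\frac{\partial}{\partial\theta}\log p_\theta(u)\,p_\theta(u)^\beta]$.

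Next, I would apply the score-function identity in reverse: multiplying by the density yields $(\beta+1)\,p_\theta(u)^\beta\,\frac{\partial}{\partial\theta}\log p_\theta(u)\,p_\theta(u) = \frac{\partial}{\partial\theta}p_\theta(u)^{\beta+1}$, and swapping derivative and integral under routine dominated-convergence regularity gives $(\beta+1)\,\E_{u\sim P_\theta}[\frac{\partial}{\partial\theta}\log p_\theta(u)\,p_\theta(u)^\beta] = \frac{\partial}{\partial\theta}\int p_\theta(u)^{1+\beta}\,du$. Combined with the unchanged data-dependent term, this recovers exactly $\frac{\partial}{\partial\theta}\MD^\beta_n(\theta)$, which establishes the unbiasedness condition.

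For the computational bound, I would expand $\hat{\lambda}_j(\theta,\nu)=\max\{\nu_j[\Psi'_{b,n}(\theta)]_j,0\}$ with $\Psi'_{b,n}(\theta)=-\frac{\partial}{\partial\theta}\log\pi(\theta)+\omega\varphi^\beta_{b,n}(\theta)$, and decompose the argument of the $\max$ into its prior, simulation, and data contributions. The only piece depending on the simulated data is the average $\frac{\nu_j\omega(\beta+1)}{b}\sum_{k=1}^b \frac{\partial}{\partial\theta_j}\log p_\theta(u_k)\,p_\theta(u_k)^\beta$, and a termwise triangle-inequality bound using $|\nu_j|=1$ together with the hypothesis $\bigl|\frac{\partial}{\partial\theta_j}\log p_\theta(u)\,p_\theta(u)^\beta\bigr|\le R_j(\theta,\beta)$ controls it by $\omega(\beta+1)R_j(\theta,\beta)=\frac{\omega(\beta+1)}{n}\sum_{i=1}^n R_j(\theta,\beta)$ uniformly in $u_{1:b}$. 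Since $\max\{x,0\}\le \max\{A,0\}$ whenever $x\le A$, combining the bounds on the three contributions yields the stated inequality (and one may absorb the outer $\max\{\cdot,0\}$ into the computational bound $\hat{\Lambda}_j$ without loss).

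The main obstacle is not really substantive: both parts are essentially calculations. The only technical subtlety is justifying the interchange of derivative and integral in the score-function step, but this is standard under the implicit regularity assumptions that make $\MD^\beta_n$ differentiable (essentially a dominated-convergence condition on $p_\theta^{1+\beta}$). In practice, the harder work is not in the Lemma itself but in exhibiting concrete choices of $R_j(\theta,\beta)$ for particular parametric models, which is what Corollary \ref{Cor:betaD_Poisson} does for Poisson regression.
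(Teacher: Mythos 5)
Your proof is correct, and for the computational bound it coincides with the paper's argument (expand $\hat{\lambda}_j$, bound the simulated-data term termwise by $R_j$ using $|\nu_j|=1$, and note that the sum over $k$ of $R_j/(bn)$ collapses to $\frac{1}{n}\sum_i R_j$). For the unbiasedness part you run the calculation in the opposite direction from the paper: the paper starts from $\mathbb{E}_{u_{1:b}}[\MD^{\beta}_{b,n}(\theta)]=\MD^{\beta}_n(\theta)$, differentiates the integral $\int \MD^{\beta,1}_{b,n}(\theta)\,p_\theta(u_{1:b})\,\dt u_{1:b}$ via the product rule and the log-derivative trick applied to the \emph{joint} density $p_\theta(u_{1:b})$, and then identifies the resulting candidate $\frac{\partial}{\partial\theta}\MD^{\beta,1}_{b,n}+\MD^{\beta,1}_{b,n}\frac{\partial}{\partial\theta}\log p_\theta(u_{1:b})+\frac{\partial}{\partial\theta}\MD^{\beta,2}_n$ with the stated $\varphi^{\beta}_{b,n}$. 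You instead take the stated $\varphi^{\beta}_{b,n}$ as given, compute its expectation, and recognise $(\beta+1)\,\E_{u\sim P_\theta}[p_\theta(u)^\beta\,\partial_\theta\log p_\theta(u)]=\partial_\theta\int p_\theta^{1+\beta}(u)\,\dt u$ by the score identity. Both routes rest on the same interchange of derivative and integral, but yours is slightly cleaner on one point: the paper's candidate contains cross terms $p_\theta(u_k)^\beta\,\partial_\theta\log p_\theta(u_{k'})$ with $k\neq k'$ that vanish only in expectation (the score has mean zero), so its final ``$=$'' between the generic formula \eqref{eq:candidate-direct} and the stated $\varphi^{\beta}_{b,n}$ is really an equality of unbiased estimators rather than of random variables; your direct verification sidesteps this identification entirely. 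Your closing remark about absorbing the outer $\max\{\cdot,0\}$ into $\hat{\Lambda}_j$ also addresses a nonnegativity detail the paper leaves implicit.
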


\begin{proof}
We decompose \eqref{eq:loss-estimated-beta-div} as
$
{\MD}^{\beta}_{b,n}(\theta)=  {\MD}^{\beta,1}_{b,n}(\theta) + {\MD}^{\beta,2}_{n}(\theta)$, where 
\begin{flalign*}
    {\MD}^{\beta,1}_{b,n}(\theta) &= \frac{1}{bn}\sum_{i=1}^n\sum_{k=1}^bp_{\theta}(u_k)^{\beta}\textrm{ and }{\MD}^{\beta,2}_{n}(\theta) = -\left(1+\frac{1}{\beta}\right)\frac{1}{n} \sum_{i=1}^n p_{\theta}(y_i)^\beta.
\end{flalign*}
Noting that $\mathbb{E}_{u_{1:b}\sim P_{\theta}}\left[{\MD}^{\beta}_{b,n}(\theta)\right] = {\MD}^{\beta}_{n}(\theta)$, we use the chain rule to conclude that 
\begin{IEEEeqnarray}{rCl}
&&\frac{\partial}{\partial \theta}\mathbb{E}_{u_{1:b}\sim P_{\theta}}\left[{\MD}^{\beta}_{b,n}(\theta)\right] = \frac{\partial}{\partial \theta}\left\{\mathbb{E}_{u_{1:b}\sim P_{\theta}}\left[{\MD}^{\beta,1}_{b,n}(\theta)\right] + {\MD}^{\beta,2}_{n}(\theta)\right\}\nonumber\\
&=& \int\frac{\partial}{\partial \theta}\left({\MD}^{\beta,1}_{b,n}(\theta)p_{\theta}(u_{1:b})\right)du_{1:b} + \frac{\partial}{\partial \theta}{\MD}^{\beta,2}_{n}(\theta)\nonumber\\
&=& \int\frac{\partial}{\partial \theta}{\MD}^{\beta,1}_{b,n}(\theta)p_{\theta}(u_{1:b})du_{1:b} + \int{\MD}^{\beta,1}_{b,n}(\theta)\frac{\partial}{\partial \theta}p_{\theta}(u_{1:b})du_{1:b}+ \frac{\partial}{\partial \theta}{\MD}^{\beta,2}_{n}(\theta)\nonumber\\
&=& \int\frac{\partial}{\partial \theta}{\MD}^{\beta,1}_{b,n}(\theta)p_{\theta}(u_{1:b})du_{1:b} + \int{\MD}^{\beta,1}_{b,n}(\theta)p_{\theta}(u_{1:b})\frac{\partial}{\partial \theta}\log p_{\theta}(u_{1:b})du_{1:b}+ \frac{\partial}{\partial \theta}{\MD}^{\beta,2}_{n}(\theta)\nonumber
\end{IEEEeqnarray}
where the final line uses the log-derivative trick. As a result, taking
\begin{align}
&\varphi^{\beta}_{m, n}(\theta)=\frac{\partial}{\partial\theta} \MD^{\beta,1}_{b,n}(\theta)+\MD^{\beta,1}_{b,n}(\theta)\frac{\partial}{\partial\theta} \log p_\theta(\U) + \frac{\partial}{\partial{\theta_j}}\MD_n^{\beta,2}(\theta)\nonumber\\
&=\frac{(\beta + 1)}{bn}\sum_{i=1}^n\sum_{k=1}^b\frac{\partial}{\partial \theta} \log p_{\theta}(u_k)p_{\theta}(u_k)^{\beta} - (\beta + 1)\frac{1}{n}\sum_{i=1}^n\frac{\partial}{\partial \theta} \log p_{\theta}(y_i)p_{\theta}(y_i)^{\beta}\nonumber
\end{align}
satisfies \eqref{eq:needed}.
Further, if $
\left|\frac{\partial}{\partial{\theta_j}} \log p_\theta(u) p^\beta_\theta(u)\right|\leq R_j(\theta,\beta)$, then the result follows, as
\begin{align}
    &\hat{\lambda}_{j}(\theta,\nu) = \max\left\{0, \nu_j \cdot \left[\Psi_{b,n}'(\theta)\right]_j\right\} =  \max\left\{0, \nu_j\left(-\frac{\partial}{\partial\theta}\log(\pi(\theta)) + \omega\varphi_{m, j}(\theta)\right)\right\}\nonumber\\
    &\leq -\nu_j\frac{\partial}{\partial\theta_j}\log(\pi(\theta)) +  \omega\frac{(\beta + 1)}{bn}\sum_{i=1}^n\sum_{k=1}^b R_j(\theta,\beta)\nonumber- \nu_j\omega(\beta + 1)\frac{1}{n}\sum_{i=1}^n\frac{\partial}{\partial \theta_j} \log p_{\theta}(y_i)p_{\theta}(y_i)^{\beta},
\end{align}
which finally concludes the proof.
\end{proof}

While  $\frac{\partial}{\partial{\theta_j}} \log p_\theta(u)$ may be unbounded for many popular models, constructing $R_j(\theta,\beta)$ is often still possible thanks to the fact that $p_\theta(u)\rightarrow 0$ in the same tail regions of the model where $\frac{\partial}{\partial{\theta_j}} \log p_\theta(u) \rightarrow\infty$. 
Below, we demonstrate this for the Poisson regression model that we study numerically in Section \ref{sec:examples}.

\begin{corollary} Choosing the $\beta$-divergence loss $\MD_{m,n}^{\beta}$ for a Poisson regression model with likelihood $p_{\theta}(u; x)=\frac{\lambda(\theta,x)^ue^{-\lambda(\theta,x)}}{u!}$, $\lambda(\theta,x)=e^{x^\top\theta}$, $u \in \mathbb{N}$, $x\in\mathbb{R}^p$ and $\theta\in\mathbb{R}^p$, the expression
    \begin{IEEEeqnarray}{rCl}
    R_j(\theta,\beta) =   \exp\{\lambda\beta-\beta\log\pi/2\}\exp\{e^{\{2\log(\lambda)-1\}}\}\left|x_j\right|      \nonumber
    \end{IEEEeqnarray}
    is an upper bound that satisfies Lemma \ref{lem:betaD_direct}.
    \label{Cor:betaD_Poisson}
\end{corollary}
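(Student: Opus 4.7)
The plan is to verify the stated upper bound by direct computation on the Poisson regression likelihood, followed by a uniform-in-$u$ maximisation of a scalar function on $\mathbb{N}$. The corollary is essentially a self-contained calculus exercise once we have plugged in the specific form of $p_\theta$, so the strategy is to reduce the claim to bounding a single one-dimensional function.

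First I would compute the score function explicitly. Writing $\lambda = \lambda(\theta,x) = e^{x^\top\theta}$, differentiating $\log p_\theta(u;x) = u\,x^\top\theta - \lambda - \log(u!)$ in $\theta_j$ gives the standard Poisson identity
\begin{IEEEeqnarray*}{rCl}
\frac{\partial}{\partial\theta_j}\log p_\theta(u;x) &=& x_j(u-\lambda),
\end{IEEEeqnarray*}
so that
\begin{IEEEeqnarray*}{rCl}
\Bigl|\tfrac{\partial}{\partial\theta_j}\log p_\theta(u;x)\, p_\theta(u;x)^\beta\Bigr| &=& |x_j|\,|u-\lambda|\left(\frac{\lambda^u e^{-\lambda}}{u!}\right)^{\!\beta}.
\end{IEEEeqnarray*}
The problem therefore reduces to exhibiting a uniform-in-$u$ upper bound on $F(u) := |u-\lambda|\bigl(\lambda^u e^{-\lambda}/u!\bigr)^\beta$ that can be expressed through $\lambda$ and $\beta$ alone.

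Next I would attack $F$ using Stirling's lower bound $u!\ge \sqrt{2\pi u}\,(u/e)^u$ for $u\ge1$ (with the trivial $u=0$ case handled separately) to get
\begin{IEEEeqnarray*}{rCl}
F(u) &\le& (u+\lambda)\,\frac{1}{(2\pi u)^{\beta/2}}\left(\frac{\lambda e}{u}\right)^{\!u\beta}\!e^{-\lambda\beta}.
\end{IEEEeqnarray*}
The maximiser in $u$ of $(\lambda e/u)^u$ is at $u=\lambda$ with value $e^\lambda$, so $(\lambda e/u)^{u\beta}\le e^{\lambda\beta}$; this cancels the $e^{-\lambda\beta}$ and yields the $\pi^{-\beta/2}$ factor from the Stirling normaliser. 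The remaining polynomial factor $(u+\lambda)$ must be absorbed into the super-exponential tail decay of $(\lambda e/u)^{u\beta}$; this is where the $\exp\{\lambda^2/e\}=\exp\{e^{2\log\lambda-1}\}$ term arises. The clean way to extract it is to apply the elementary inequality $t\le e^{t-1}\cdot e$ at $t=u/\lambda$, or equivalently to run a single Lagrange-multiplier-style optimisation of $\log(u+\lambda)+u\beta(\log\lambda+1-\log u)-\lambda\beta$ on $u\ge0$ and bound the maximum by $\lambda^2/e$.

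The main obstacle is this last step: the exact calibration of the polynomial-times-sub-Gaussian-tail maximisation to match the precise constants in $R_j(\theta,\beta)$ is fiddly, and one has to be careful because the critical point of the log-expression above does not admit a closed form, so one is forced to use a sharp-but-crude upper bound of the form $u\le e^{u-1}$ (or $u\lambda \le \tfrac12(u^2+\lambda^2)$ combined with a second Stirling pass) to convert the polynomial prefactor into an exponential in $\lambda^2$. Once this scalar bound is established, multiplying through by $|x_j|$ yields the stated $R_j(\theta,\beta)$, and the corollary follows by plugging this $R_j$ into the generic bound of Lemma~\ref{lem:betaD_direct}.
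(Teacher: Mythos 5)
Your overall reduction matches the paper's: you compute the Poisson score $x_j(u-\lambda)$, reduce the corollary to a uniform-in-$u$ bound on $|u-\lambda|\,p_\theta(u;x)^\beta$, and invoke a Stirling-type lower bound on $u!$ before optimising over $u$. (The paper first splits $|u-\lambda|\,p_\theta^\beta \le u\,p_\theta^\beta + \lambda$ using $p_\theta^\beta \le 1$ and then only has to bound $u\,p_\theta^\beta$, but that difference is cosmetic.) The problem is that the decisive step --- the one that actually produces the factor $\exp\{e^{2\log\lambda-1}\} = e^{\lambda^2/e}$ --- is both incorrectly sketched and left unexecuted. After Stirling you have $F(u) \le (u+\lambda)(2\pi u)^{-\beta/2}(\lambda e/u)^{u\beta}e^{-\lambda\beta}$. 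You then bound $(\lambda e/u)^{u\beta}$ by its maximum $e^{\lambda\beta}$ (attained at $u=\lambda$) to cancel $e^{-\lambda\beta}$, and \emph{separately} claim that the tail decay of that same factor absorbs the polynomial $(u+\lambda)$. You cannot use the factor twice: once it has been replaced by its maximum, what remains is $(u+\lambda)(2\pi u)^{-\beta/2}$, which is unbounded in $u$ whenever $\beta<2$, so the step fails as written.

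The repair is the joint optimisation that the paper actually carries out: after extracting $e^{\lambda\beta}\pi^{-\beta/2}$ it bounds the remaining exponent $\log(u)[1-\beta(u+1)] + u\beta[1+\log\lambda]$ as a whole, uses the auxiliary inequality $\log u \le u\log(u)/2$ for $u\ge 2$ to reduce it to $-u\log(u)/4 + u[1+\log\lambda]/2$, and maximises that expression in closed form at $u^\star = e^{2\log\lambda - 1}$, which is precisely where the $e^{\lambda^2/e}$ term comes from. (The paper performs this reduction with $\beta=1/2$ substituted, the value used in its experiments.) Your proposed alternatives --- applying $t \le e^{t-1}\cdot e$ at $t = u/\lambda$, or asserting that the maximum of $\log(u+\lambda)+u\beta(\log\lambda+1-\log u)-\lambda\beta$ is at most $\lambda^2/e$ --- are not derived, and the second is essentially the statement to be proved, so asserting it is circular. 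As it stands, the proposal establishes everything except the inequality that constitutes the content of the corollary.
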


\begin{proof}
%
For simplicity of notation, define $\lambda=\exp(x^\top\theta)$. Since $p^\beta_\theta(u; x) < 1$ and $\frac{\partial}{\partial{\theta_j}} \log p_\theta(u; x) = (u - \lambda)x_j$, we have
\begin{IEEEeqnarray}{rCl}
    \left|\frac{\partial}{\partial{\theta_j}} \log p_\theta(u; x) p^\beta_\theta(u; x)\right| & = & \left|(u - \lambda)x_jp^\beta_\theta(u)\right|
    <
    \left(up^\beta_\theta(u; x) + \lambda\right)\left|x_j\right|.\nonumber
\end{IEEEeqnarray}
If we can show that,  
\begin{equation}\label{eq:show1}
up^\beta_\theta(u; x) \leq  \exp[\lambda\beta-\beta\log\pi/2]\exp\{e^{\{2\log(\lambda)-1\}}\}
\end{equation}
then
\begin{IEEEeqnarray}{rCl}
    \left|\frac{\partial}{\partial{\theta_j}} \log p_\theta(u; x) p^\beta_\theta(u; x)\right| 
    & < & \exp\{\lambda\beta-\beta\log\pi/2\}\exp[e^{\{2\log(\lambda)-1\}}]\left|x_j\right|,\nonumber
\end{IEEEeqnarray}
as required.

Now, rewrite 
$$
u p_\theta (u;x)^\beta = \exp\{\log(u)+\lambda\beta+u\beta\log(\lambda)-\beta\log(u!)\}.
$$
To demonstrate \eqref{eq:show1}, we note that, from the Ramanujan expansion of $\log(u!)$
$$
\log u!\approx u \log u-u+\frac{\log (u(1+4 u(1+2 u)))}{6}+\frac{\log (\pi)}{2},
$$ it can be verified that 
$$
\log (u!) \ge u\log u - u +\log(u)+\log(\pi)/2.
$$Applying the above then delivers
\begin{flalign*}
  u p_\theta (u;x)^\beta &\le \exp\{\log(u)+\lambda\beta+u\beta[1+\log(\lambda)] -u\beta\log(u)-\beta\log(u)-\beta\log(\pi)/2 \}\\&=\exp\{\lambda\beta-\beta\log\pi/2\}\exp\{\log(u)[1-\beta(u+1)]+u\beta[1+\log(\lambda)]\}
\end{flalign*}
First, let us rewrite the inner term as  
$$
\log(u)[1-(1/2)(u+1)]+(1/2)u[1+\log(\lambda)]=\log(u)/2-u\log(u)/2+u[1+\log\lambda]/2
$$Further, note that, for $u\ge 2$ larger, we have that $\log(u)\le u\log(u)/2$ so that we have 
\begin{flalign*}
\log(u)/2-u\log(u)/2+u[1+\log\lambda]/2&\le u\log(u)/4-u\log(u)/2+u[1+\log\lambda]/2  \\&\le   -u\log(u)/4+u[1+\log\lambda]2.
\end{flalign*}Applying this we arrive at 
\begin{flalign*}
  u p_\theta (u;x)^\beta &\le\exp\{\lambda\beta-\beta\log\pi/2\}\exp\{ -u\log(u)/4+u[1+\log\lambda]/2\}
\end{flalign*}
Now, consider maximizing the second term as a function of $u$. Since $\exp(\cdot)$ is monotonic, this is equivalent to maximizing the inner term, which is achieved at 
$$
u^\star=\exp\{2[1+\log(\lambda)]-1\}=\exp\{2\log(\lambda)-1\}.
$$Plugging in $u^\star$ into this term in we arrive at the upper bound
\begin{flalign*}
  u p_\theta (u;x)^\beta\le &\exp\{\lambda\beta-\beta\log\pi/2\}\\&\times\exp\{ -e^{\{2\log(\lambda)-1\}}\{2\log(\lambda)-1\}/4+e^{\{2\log(\lambda)-1\}}[1+\log\lambda]/2\}\\&\le \exp\{\lambda\beta-\beta\log\pi/2\}\\&\times \exp\{ -e^{\{2\log(\lambda)-1\}}\left[\{2\log(\lambda)-1\}/4-[1+\log\lambda]/2\right]\}\\&\le \exp\{\lambda\beta-\beta\log\pi/2\}\\&\times \exp\{-e^{\{2\log(\lambda)-1\}}(-3/4)\}\\&\le \exp\{\lambda\beta-\beta\log\pi/2\}\exp\{e^{\{2\log(\lambda)-1\}}\}
\end{flalign*}

\end{proof}

\subsubsection{Maximum Mean Discrepancy (MMD).}{\label{sec:MMDappendix}}

First, note that the gradient of the estimated MMD loss in \eqref{eq:mmd-loss-mn} is \textit{biased} for $\frac{\partial}{\partial\theta}\MD^k_{n}(\theta)$, and thus does not satisfy the requirements of \eqref{eq:needed}.
We instead consider 
\begin{IEEEeqnarray}{rCl}
\MD'^k_{b,n}(\theta, u_{1:b})
& = &
-
2\frac{1}{nb}\sum_{i=1}^n\sum_{j=1}^b  k(u_j, y_i) 
+
\frac{1}{b(b-1)}\sum_{j=1}^b\sum_{j'\neq j}^b
k(u_j, u_{j'}),
\label{eq:mmd-loss-mn_unbiased}
\end{IEEEeqnarray}
which allows us the derivation of an unbiased estimator of the gradient.
Following the discussion in Section \ref{sec:constructing_grad}, we now derive $\varphi_{b, n}(\theta)$  for this loss and provide conditions for the kernel function and generator such that computational bounds can be obtained. 

\begin{lemma}\label{lem:MMD_indirect}
 For the MMD-loss in \eqref{eq:mmd-loss-mn_unbiased} applied to observations $y_{1:n}$ with $y_i\in\mathbb{R}^p$, taking 
    \begin{align}
&\varphi_{b, n}(\theta)=\sum_{k=1}^b\sum_{l=1}^p\left[\frac{\partial}{\partial u_{kl}} \MD'^k_{b,n}(\theta,u_{1:b})\right]_{\U = G_{\theta}(v_{1:b})}\frac{\partial}{\partial \theta}G_{\theta,l}(v_{k})\nonumber
\end{align} 
where $u_k = G_{\theta}(v) = \{G_{\theta,1}(v_{k}), \ldots, G_{\theta,p}(v_{k})\}\in\mathbb{R}^p$ and $v_{1:b} \sim p_v$ satisfies \eqref{eq:needed}. If the kernel $k(\cdot, \cdot)$ and generator $G_{\theta}(\cdot)$ are such that there exist $Q_{j}: \mathbb{R}^p\times\Theta\mapsto \mathbb{R}$ and $R_{j}:\Theta\mapsto \mathbb{R}$ for $j=1,\dots d$ so that
\begin{IEEEeqnarray}{rCl}
    Q_{j}(y_i, \theta)&\geq&\left|\sum_{l=1}^p\left[\frac{\partial}{\partial u_l} k(y_i, u)\right]_{u = G_{\theta}(v)}\frac{\partial}{\partial \theta_j}G_{\theta,l}(v)\right|\nonumber\\
    R_{j}(\theta) &\geq& \left|\sum_{l=1}^p\left[\frac{\partial}{\partial u_l} k(u, G_{\theta}(v^{\prime}))\right]_{u = G_{\theta}(v)}\frac{\partial}{\partial \theta_j}G_{\theta,l}(v) + \left[\frac{\partial}{\partial u^{\prime}_l}k(G_{\theta}(v), u^{\prime})\right]_{u^{\prime} = G_{\theta}(v^{\prime})}\frac{\partial}{\partial \theta_j}G_{\theta,l}(v^{\prime})\right|,\nonumber
\end{IEEEeqnarray}
for all $v, v^{\prime}$ in the support of $p_v$, then 
\begin{align}
        &\hat{\lambda}_{j}(\theta,\nu) \leq   \left|\frac{\partial}{\partial \theta}\log(\pi(\theta))\right| + \omega\frac{2}{n}\sum_{i=1}^n Q_j(y_i, \theta) + \omega R_{j}(\theta).\nonumber
\end{align}
\end{lemma}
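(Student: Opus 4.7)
The plan is to split the proof into two distinct tasks matching the two claims of the lemma: first verify the unbiasedness identity \eqref{eq:needed} for the proposed $\varphi_{b,n}$, and then derive the pointwise bound on $\hat{\lambda}_j(\theta,\nu)$. For the first, I would exploit the pushforward representation $u_k = G_{\theta}(v_k)$ with $v_k \overset{\text{iid}}{\sim} p_v$ to rewrite $\MD_n^k(\theta) = \E_{v_{1:b} \sim p_v}[\MD'^k_{b,n}(\theta, G_\theta(v_{1:b}))]$, invoking that $\MD'^k_{b,n}$ is an unbiased estimator of $\MD_n^k$ (this is the reason \eqref{eq:mmd-loss-mn_unbiased} replaces the biased version from the main text). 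Since $p_v$ does not depend on $\theta$, one can differentiate under the integral sign (under standard regularity: $G_\theta$ and $k$ smooth in the relevant arguments, together with a dominating integrable envelope). Observing that $\MD'^k_{b,n}(\theta, u_{1:b})$ depends on $\theta$ only through $u_{1:b}$, the multivariate chain rule yields exactly the expression defining $\varphi_{b,n}(\theta)$, establishing \eqref{eq:needed}.

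For the second part, I would start from the elementary inequality
\begin{equation*}
\hat{\lambda}_j(\theta,\nu) = \max\{\nu_j [\Psi_{b,n}'(\theta)]_j, 0\} \;\leq\; \left|\tfrac{\partial}{\partial \theta_j}\log\pi(\theta)\right| + \omega \, |[\varphi_{b,n}(\theta)]_j|,
\end{equation*}
so the task reduces to bounding $|[\varphi_{b,n}(\theta)]_j|$. I would decompose $\MD'^k_{b,n} = A_{b,n} + B_{b,n}$, where $A_{b,n}(\theta,u_{1:b}) = -\tfrac{2}{nb}\sum_{i,k} k(u_k,y_i)$ contains only cross-terms between observed and simulated data, while $B_{b,n}(\theta,u_{1:b}) = \tfrac{1}{b(b-1)}\sum_{k\neq k'} k(u_k,u_{k'})$ contains only simulated-simulated terms. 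The chain-rule image of $A_{b,n}$ under $u_k = G_\theta(v_k)$ produces, for each $(i,k)$ pair, exactly the inner expression bounded by $Q_j(y_i,\theta)$, so its absolute value contributes at most $\tfrac{2}{n}\sum_{i=1}^n Q_j(y_i,\theta)$ after collapsing the sum over $k$. The chain-rule image of $B_{b,n}$ produces, for each unordered pair $(k,k')$ with $k\neq k'$, precisely the two-term combination whose absolute value is bounded by $R_j(\theta)$; there are $b(b-1)$ ordered such pairs, so the normalising factor $1/[b(b-1)]$ yields an overall bound of $R_j(\theta)$. Summing the two bounds yields the claim.

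The step I expect to require the most care is the combinatorial bookkeeping for $B_{b,n}$: one must correctly identify that differentiating $k(G_\theta(v_k), G_\theta(v_{k'}))$ with respect to $\theta_j$ produces both a ``first-argument'' term and a ``second-argument'' term and that these are exactly the two contributions inside the absolute value defining $R_j$. A subtle but minor technical point is justifying the interchange of differentiation and expectation in the first part; this should follow from assuming $k$ and $G_\theta$ are $C^1$ with locally bounded partial derivatives, which will already be implicit in the existence of the bounds $Q_j$ and $R_j$. The remaining steps are essentially algebraic.
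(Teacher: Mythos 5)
Your proposal is correct and follows essentially the same route as the paper: unbiasedness via the pushforward representation $u_{1:b}=G_\theta(v_{1:b})$ with differentiation under the $p_v$-integral and the multivariate chain rule, followed by $\max\{\nu_j x,0\}\le|x|$ and term-by-term application of the envelopes $Q_j$ and $R_j$ with the same $b$- and $b(b-1)$-cancellation in the two sums. Your explicit split of $\MD'^k_{b,n}$ into cross and simulated-simulated parts is only a slightly more organised presentation of the bookkeeping the paper carries out directly on the written-out form of $\varphi_{b,n}$.
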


\begin{proof}
As we sample $u_{1:b}\sim p_{\theta}$ by first sampling $v_{1:b} \sim p_v$ and then setting $u_{1:b} = G_{\theta}(v_{1:b})$, 
\begin{IEEEeqnarray}{rCl}
    \MD_n^k(\theta) &=& \mathbb{E}_{u_{1:b}\sim P_{\theta}}\left[\MD'^k_{b,n}(\theta, u_{1:b})\right] = \mathbb{E}_{v_{1:b}\sim P_{v}}\left[\MD'^k_{b,n}(\theta, G_{\theta}(v_{1:b}))\right].\nonumber
\end{IEEEeqnarray}
The multivariate chain rule and the fact that $\MD'^k_{b,n}$ depends on $\theta$ only through $\U$ demonstrates that
\begin{flalign*}
\varphi_{b, n}(\theta)=\sum_{k=1}^b\sum_{l=1}^p\left[\frac{\partial}{\partial u_{kl}} \MD'^k_{b,n}(\theta,u_{1:b})\right]_{\U = G_{\theta}(v_{1:b})}\frac{\partial}{\partial \theta}G_{\theta,l}(v_{k})\nonumber
\end{flalign*}
with $v_{1:b} \sim p_v$ satisfies \eqref{eq:needed}. 
The multivariate chain rule is required as $\MD'^k_{b,n}(\theta,u_{1:b})$ is a function of $u_1, \ldots, u_b$ with each $u_k$ being $p$-dimensional. Further, by the definition of $\MD'^k_{b,n}$ 
\begin{IEEEeqnarray}{rCL}
    \varphi_{b, n}(\theta) & = & -\frac{2}{bn}\sum_{i=1}^n\sum_{k=1}^b\sum_{l=1}^p\left[\frac{\partial}{\partial u_{kl}} k(y_i, u_k)\right]_{u_{k} = G_{\theta}(v_{k})}\frac{\partial}{\partial \theta}G_{\theta,l}(v_{k})\nonumber\\
    && + \frac{1}{b(b-1)}\sum_{k=1}^b\sum_{k^{\prime}\neq k}\sum_{l=1}^p \left\{\frac{\partial}{\partial u_{kl}} \left[k(u_k, G_{\theta}(v_{k^{\prime}}))\right]_{u_k = G_{\theta}(v_{k})}\frac{\partial}{\partial \theta}G_{\theta,l}(v_{k})\right.\nonumber\\
    && \qquad\left.+ \left[\frac{\partial}{\partial u_{k^{\prime}l}}k(G_{\theta}(v_{k}), u_{k^{\prime}})\right]_{u_{k^{\prime}} = G_{\theta}(v_{k^{\prime}})}\frac{\partial}{\partial \theta}G_{\theta,l}(v_{k^{\prime}})\right\}\nonumber.
\end{IEEEeqnarray}
Therefore, assuming for $j = 1,\ldots, d$ that $Q_{j}(y_i, \theta) \geq \left|\sum_{l=1}^p\left[\frac{\partial}{\partial u_l} k(y_i, u)\right]_{u = G_{\theta}(v)}\frac{\partial}{\partial \theta_j}G_{\theta,l}(v)\right|$ for all $v$ and 
\begin{IEEEeqnarray}{rCl}
    R_{j}(\theta) &\geq& \left|\sum_{l=1}^p\left[\frac{\partial}{\partial u_l} k(u, G_{\theta}(v^{\prime}))\right]_{u = G_{\theta}(v)}\frac{\partial}{\partial \theta_j}G_{\theta,l}(v) + \left[\frac{\partial}{\partial u^{\prime}_l}k(G_{\theta}(v), u^{\prime})\right]_{u^{\prime} = G_{\theta}(v^{\prime})}\frac{\partial}{\partial \theta_j}G_{\theta,l}(v^{\prime})\right|,\nonumber
\end{IEEEeqnarray}
for all $v$ and $v^{\prime}$, 
is sufficient to construct computational bounds bounds $\hat{\Lambda}_{j}(\theta,\nu)$ satisfying \eqref{eq:computational-bound-estimated} as 
\begin{align}
    \hat{\lambda}_{j}(\theta,\nu) &= \max\left\{\nu_j\left(-\frac{\partial}{\partial \theta}\log(\pi(\theta_j)) + \omega\varphi_{b,n}(\theta)\right), 0\right\}\nonumber\\
    &\leq \left|\frac{\partial}{\partial \theta_j}\log(\pi(\theta))\right| + \omega\frac{2}{n}\sum_{i=1}^n Q(y_i, \theta) + \omega R_{j}(\theta),\nonumber
\end{align}
which completes the proof.
\end{proof}

The following corollary exemplifies the conditions of Lemma \ref{lem:MMD_indirect} for the  radial basis function (RBF) kernel.

\begin{corollary}    \label{lem:MMDRBF_indirect}
    For the radial basis function (RBF) kernel with hyperparameter $\gamma$ and $y_i\in \mathbb{R}^p$, the computational bounds of Lemma \ref{lem:MMD_indirect} require the generator $G_{\theta}(\cdot)$ to admit functions $Q_j(y, \theta)$ and $R_j(\theta)$ for $j=1,\dots d$ so that for all $v, v^{\prime}$ in the support of $p_v$,
    \begin{IEEEeqnarray}{rCl}
        Q_{j}(y_i, \theta)&\geq&\left|\exp\left(-\sum_{l=1}^p\frac{(y_{il}-G_{\theta,l}(v))^2}{2\gamma}\right)\sum_{l=1}^p\frac{(y_{il}-G_{\theta,l}(v))}{(2\pi)^{p/2}\gamma^{p/2+1}}\frac{\partial}{\partial \theta_j}G_{\theta,l}(v)\right|\nonumber\\
        R_{j}(\theta) &\geq& \left|\exp\left(-\sum_{l=1}^p\frac{(G_{\theta,l}(v)-G_{\theta,l}(v^{\prime}))^2}{2\gamma}\right)\right.\nonumber\\
        &&\cdot\left.\sum_{l=1}^p\frac{(G_{\theta,l}(v)-G_{\theta,l}(v^{\prime}))}{(2\pi)^{p/2}\gamma^{p/2+1}}\left(\frac{\partial}{\partial \theta_j}G_{\theta,l}(v) - \frac{\partial}{\partial \theta_j}G_{\theta,l}(v^{\prime})\right)\right|.\nonumber
\end{IEEEeqnarray}
\end{corollary}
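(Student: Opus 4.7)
The plan is to establish Corollary \ref{lem:MMDRBF_indirect} as a direct specialisation of Lemma \ref{lem:MMD_indirect} by explicitly computing the partial derivatives of the RBF kernel and substituting them into the two generic bounding conditions. The corollary is not asserting anything deeper than Lemma \ref{lem:MMD_indirect} already gives; it simply rewrites the requirement ``there exist $Q_j, R_j$ bounding these kernel-derivative expressions from above'' with the kernel derivative evaluated in closed form for the RBF case. Thus, there is no conceptual obstruction, and the proof reduces to a bookkeeping calculation.

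Concretely, I would begin by recalling the (normalised) RBF kernel
\begin{IEEEeqnarray}{rCl}
k(x,y) & = & \frac{1}{(2\pi)^{p/2}\gamma^{p/2}}\exp\!\left(-\sum_{l=1}^p \frac{(x_l-y_l)^2}{2\gamma}\right),
\nonumber
\end{IEEEeqnarray}
consistent with the factor $(2\pi)^{p/2}\gamma^{p/2+1}$ appearing in the statement. Differentiating in the $l$-th coordinate of the second argument gives
\begin{IEEEeqnarray}{rCl}
\frac{\partial}{\partial u_l} k(y,u) & = & \exp\!\left(-\sum_{l'=1}^p \frac{(y_{l'}-u_{l'})^2}{2\gamma}\right) \cdot \frac{(y_l-u_l)}{(2\pi)^{p/2}\gamma^{p/2+1}},
\nonumber
\end{IEEEeqnarray}
and an analogous expression holds for $\frac{\partial}{\partial u_l}k(u,G_\theta(v'))$ with $y$ replaced by $G_\theta(v')$ (with a sign flip since $u$ appears as the first argument, but after the chain rule this sign is absorbed into the $(G_{\theta,l}(v)-G_{\theta,l}(v'))$ factor). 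By symmetry of $k$, $\frac{\partial}{\partial u'_l}k(G_\theta(v),u')$ at $u'=G_\theta(v')$ yields the same Gaussian prefactor multiplied by $(G_{\theta,l}(v')-G_{\theta,l}(v))/[(2\pi)^{p/2}\gamma^{p/2+1}]$.

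I would then substitute these expressions into the two inequalities in the hypothesis of Lemma \ref{lem:MMD_indirect}. The first yields immediately the stated requirement on $Q_j(y_i,\theta)$. For the second, evaluating the two derivatives at $u=G_\theta(v)$ and $u'=G_\theta(v')$ and using symmetry to combine the two Gaussian prefactors (which coincide), one obtains a common factor $(G_{\theta,l}(v)-G_{\theta,l}(v'))/[(2\pi)^{p/2}\gamma^{p/2+1}]$ multiplying $\partial_{\theta_j}G_{\theta,l}(v)-\partial_{\theta_j}G_{\theta,l}(v')$ after the sign flip from swapping arguments of $k$. Summing over $l$ and taking absolute values yields the stated bound on $R_j(\theta)$. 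No step requires any work beyond routine chain-rule differentiation and tracking of signs; once the kernel derivative is written out, the corollary follows by inspection from Lemma \ref{lem:MMD_indirect}.

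The only mild subtlety—and the closest thing to an obstacle—is bookkeeping the sign and symmetry when the variable of differentiation sits in the first versus second slot of $k$, so that the two terms defining $R_j$ can be combined under a single Gaussian envelope. This is purely mechanical, so I would carry it out explicitly in the proof but not linger on it.
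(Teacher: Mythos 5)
Your proposal is correct and follows essentially the same route as the paper: compute $\frac{\partial}{\partial u_l}k(y,u)$ for the normalised RBF kernel, use the antisymmetry $\frac{\partial}{\partial u_l}k(y,u)=-\frac{\partial}{\partial y_l}k(y,u)$ to handle the first-versus-second-slot sign, and substitute into the two bounding conditions of Lemma \ref{lem:MMD_indirect}. The paper's proof is exactly this mechanical computation, including the combination of the two derivative terms in the $R_j$ bound under the common Gaussian prefactor.
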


\begin{proof}
    When using the RBF kernel $k(y, u) = \frac{1}{(2\pi)^{p/2}\gamma^{p/2}}\exp\left(-\sum_{l=1}^p\frac{(y_l-u_l)^2}{2\gamma}\right)$ and therefore,
    \begin{IEEEeqnarray}{rCl}
        \frac{\partial}{\partial u_l}k(y, u) = \frac{(y_l-u_l)}{(2\pi)^{p/2}\gamma^{p/2+1}}\exp\left(-\sum_{l=1}^p\frac{(y_l-u_l)^2}{2\gamma}\right),\nonumber
    \end{IEEEeqnarray}
    and $\frac{\partial}{\partial u_l}k(y, u) = -\frac{\partial}{\partial y_l}k(y, u)$. As a result,
    \begin{IEEEeqnarray}{rCL}
        \frac{\partial}{\partial u_l} k(y, u)\Bigr|_{u = G_{\theta}(v)}\frac{\partial}{\partial \theta_j}G_{\theta,l}(v) =  \frac{(y_l-G_{\theta,l}(v))}{(2\pi)^{p/2}\gamma^{p/2+1}}\exp\left(-\sum_{l=1}^p\frac{(y_l-G_{\theta,l}(v))^2}{2\gamma}\right)\frac{\partial}{\partial \theta_j}G_{\theta,l}(v),\nonumber
    \end{IEEEeqnarray}
    and 
    \begin{IEEEeqnarray}{rCl}
    &&\frac{\partial}{\partial u_l} k(u, G_{\theta}(v^{\prime}))\Bigr|_{u = G_{\theta}(v)}\frac{\partial}{\partial \theta_j}G_{\theta,l}(v) + \frac{\partial}{\partial u^{\prime}_l}k(G_{\theta}(v), u^{\prime})\Bigr|_{u^{\prime} = G_{\theta}(v^{\prime})}\frac{\partial}{\partial \theta_j}G_{\theta,l}(v^{\prime})\nonumber\\
    &=&-\frac{(G_{\theta,l}(v)-G_{\theta,l}(v^{\prime}))}{(2\pi)^{p/2}\gamma^{p/2+1}}\exp\left(-\sum_{l=1}^p\frac{(G_{\theta,l}(v)-G_{\theta,l}(v^{\prime}))^2}{2\gamma}\right)\left(\frac{\partial}{\partial \theta_j}G_{\theta,l}(v) - \frac{\partial}{\partial \theta_j}G_{\theta,l}(v^{\prime})\right)\nonumber
\end{IEEEeqnarray}
    which by Lemma \ref{lem:MMD_indirect} provides the required result.
\end{proof}

We now derive functions satisfying Lemma \ref{lem:MMD_indirect} for the Gaussian regression and Gaussian copula examples of Sections \ref{sec:examples}. 

\begin{corollary}
    \label{cor:regression-MMD}
    Consider a Gaussian regression model with $p_{\theta}(y_i; x_i) = \mathcal{N}(y; x_i^{\top}\beta, \sigma^2)$ where $\theta = \{\beta, \log(\sigma)\}\in\mathbb{R}^{d+1}$, $x_i\in\mathbb{R}^d$, and $y_i \in \mathbb{R}$. Under the RBF kernel with hyperparameter $\gamma$, functions satisfying Lemma \ref{lem:MMD_indirect} are
    \begin{align}
        Q_j(y_i, \theta; x_i) &= \frac{1}{\sqrt{2\pi}\gamma}\exp\left(-\frac{1}{2}\right)\left|x_{ij}\right|;\nonumber\\
        R_j(\theta; x_i) = 0;\nonumber
    \end{align}
    for $j = 1,\ldots, d$ and
    \begin{align}
        Q_{d+1}(y_i, \theta; x_i) &= \max_{z\in\mathbb{R}}\left|\frac{(-z^2 + z(y_i+x_j^\top\beta) - y_ix_j^\top\beta) }{\sqrt{2\pi}\gamma^{3/2}}\exp\left(-\frac{(y_i-z)^2}{2\gamma}\right)\right|;\nonumber\\
        R_{d+1}(\theta; x_i) &= \frac{2}{\sqrt{2\pi\gamma}}\exp(-1).\nonumber
    \end{align}
\end{corollary}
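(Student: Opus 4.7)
The plan is to instantiate \Cref{lem:MMDRBF_indirect} (the $p=1$ case of \Cref{lem:MMD_indirect}) with the forward generator
\[
G_\theta(v;x_i) \;=\; x_i^\top\beta + \sigma v, \qquad v \sim \mathcal{N}(0,1), \qquad \sigma = e^{\log\sigma},
\]
so that $u = G_\theta(v;x_i) \sim \mathcal{N}(x_i^\top\beta, \sigma^2)$ as required. The partial derivatives are then immediate: $\tfrac{\partial}{\partial\beta_j} G_\theta(v;x_i) = x_{ij}$ for $j=1,\dots,d$, and $\tfrac{\partial}{\partial\log\sigma} G_\theta(v;x_i) = \sigma v$ for the $(d{+}1)$-th component. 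I would then substitute these into the two generic inequalities supplied by \Cref{lem:MMDRBF_indirect} and optimise over the auxiliary variables $v,v'$.

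For the first $d$ coordinates the bounds are essentially one-dimensional scalar optimisations. Writing $z = y_i - G_\theta(v;x_i)$, the required quantity for $Q_j$ becomes $\bigl|x_{ij}\bigr| \cdot \frac{|z|}{\sqrt{2\pi}\gamma^{3/2}} e^{-z^2/(2\gamma)}$; since $z \mapsto z e^{-z^2/(2\gamma)}$ attains its maximum at $z = \sqrt{\gamma}$ with value $\sqrt{\gamma}\,e^{-1/2}$, this yields the claimed $Q_j(y_i,\theta;x_i) = \frac{1}{\sqrt{2\pi}\gamma}e^{-1/2}|x_{ij}|$. The corresponding $R_j$ is trivially zero, because $\tfrac{\partial}{\partial\beta_j}G_\theta(v;x_i) - \tfrac{\partial}{\partial\beta_j}G_\theta(v';x_i) = x_{ij} - x_{ij} = 0$ annihilates the whole expression.

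The $(d{+}1)$-th coordinate requires a little more care. For $Q_{d+1}$, I use $\sigma v = G_\theta(v;x_i) - x_i^\top\beta$ to rewrite the product of the derivative and the $(y_i - G_\theta(v;x_i))$ factor as $(y_i-z)(z-x_i^\top\beta) = -z^2 + z(y_i + x_i^\top\beta) - y_i x_i^\top\beta$, where $z = G_\theta(v;x_i)$ now ranges over $\mathbb{R}$ as $v$ does. Taking the maximum over $z \in \mathbb{R}$ of the resulting expression (divided by $\sqrt{2\pi}\gamma^{3/2}$ and multiplied by the Gaussian factor) gives precisely the stated $Q_{d+1}$. For $R_{d+1}$, both $G_\theta(v;x_i) - G_\theta(v';x_i)$ and the derivative difference equal $\sigma(v-v')$, so the quantity to bound collapses to $\frac{\sigma^2(v-v')^2}{\sqrt{2\pi}\gamma^{3/2}} \exp\!\bigl(-\sigma^2(v-v')^2/(2\gamma)\bigr)$. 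Setting $w = \sigma^2(v-v')^2 \geq 0$, the scalar map $w \mapsto w e^{-w/(2\gamma)}$ is maximised at $w = 2\gamma$ with value $2\gamma e^{-1}$, which yields $R_{d+1}(\theta;x_i) = \tfrac{2}{\sqrt{2\pi\gamma}}e^{-1}$.

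The only mildly delicate step is $Q_{d+1}$: unlike the other three bounds, the maximiser depends nontrivially on $y_i$ and $x_i^\top\beta$ and does not admit a clean closed form, so I would leave it as the stated $\max_{z\in\mathbb{R}}$ expression rather than evaluate it explicitly (the cubic-type first-order condition is of no particular use for the zig-zag implementation). Everything else reduces to the two one-line maximisations $\max_z z e^{-z^2/(2\gamma)} = \sqrt{\gamma}\,e^{-1/2}$ and $\max_{w\geq 0} w e^{-w/(2\gamma)} = 2\gamma e^{-1}$, so the proof is essentially a direct substitution into \Cref{lem:MMDRBF_indirect} once the generator and its derivatives are written down.
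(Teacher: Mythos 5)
Your proposal is correct and follows essentially the same route as the paper: the paper's proof also instantiates Corollary \ref{lem:MMDRBF_indirect} with $p=1$, uses $\frac{\partial}{\partial\beta_j}G_\theta = x_{ij}$ and $\frac{\partial}{\partial\log\sigma}G_\theta = G_\theta(v;x_i)-x_i^\top\beta$, and performs the identical scalar maximisations $\max_z |z|e^{-z^2/(2\gamma)}=\sqrt{\gamma}e^{-1/2}$ and $\max_{w\ge 0} w e^{-w/(2\gamma)}=2\gamma e^{-1}$ (the only cosmetic difference is that the paper writes the generator via Box--Muller from uniforms rather than your $x_i^\top\beta+\sigma v$ with $v\sim\mathcal{N}(0,1)$, which changes nothing). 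The one step the paper includes that you leave implicit is the verification that the $\max_{z\in\mathbb{R}}$ defining $Q_{d+1}$ is actually finite; this is a one-line check, since the Gaussian factor dominates the quadratic as $z\to\pm\infty$, but it is needed for the bound to be a legitimate computational bound.
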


\begin{proof}
    This result concerns the RBF kernel for univariate observations so it is sufficient to satisfy the conditions of Corollary \ref{lem:MMDRBF_indirect} with $p = 1$.

    Sampling $\U \sim p_{\theta}(\cdot; x) $ can be achieved by first sampling $v_{1:b}$ with $v_k = \{v_{k1}, v_{k2}\} \sim p_v$ and $p_v(v) = \operatorname{unif}(v_1; 0, 1)\operatorname{unif}(v_2; 0, 1)$, and setting 
    \begin{align}
        u_k = G_{\theta}(v_k; x) &= \beta^Tx + \exp(\log(\sigma))\sqrt{-2\log(v_{k1})}\cos(2\pi v_{k2}).\nonumber
    \end{align}
    For the regression coefficients $j = 1,\ldots, p$    
    \begin{align}
        \frac{\partial}{\partial\beta_j}G_{\theta}(v; x) &= x_j,~ j = 1\ldots. p,\nonumber
    \end{align}
    Therefore, 
    \begin{IEEEeqnarray}{rCl}
        &&
        \left|\frac{(y-G_{\theta}(v;x))}{\sqrt{2\pi}\gamma^{3/2}}\exp\left(-\frac{(y-G_{\theta}(v;x))^2}{2\gamma}\right)\frac{\partial}{\partial \beta_j}G_{\theta}(v;x)\right| \nonumber\\
        &=& \left|\frac{(y-G_{\theta}(v;x))}{\sqrt{2\pi}\gamma^{3/2}}\exp\left(-\frac{(y-G_{\theta}(v;x))^2}{2\gamma}\right)x_j\right|\nonumber\\
        &\leq& \frac{1}{\sqrt{2\pi}\gamma}\exp\left(-\frac{1}{2}\right)\left|x_j\right|,\nonumber
    \end{IEEEeqnarray}
    and
    \begin{align}
        \frac{(G_{\theta}(v;x)-G_{\theta}(v^{\prime};x))}{\sqrt{2\pi}\gamma^{3/2}}\exp\left(-\frac{(G_{\theta}(v;x)-G_{\theta}(v^{\prime};x))^2}{2\gamma}\right)\left(\frac{\partial}{\partial \beta_j}G_{\theta}(v;x) - \frac{\partial}{\partial \beta_j}G_{\theta}(v^{\prime};x)\right) = 0.\nonumber
    \end{align}
    as required. For the residual variance $j = d+1$
    \begin{align}
        \frac{\partial}{\partial\log(\sigma)}G_{\theta}(v; x) &= \exp(\log(\sigma))\sqrt{-2\log(v_{1})}\cos(2\pi v_{2})=G_{\theta}(v; x) - x_j^\top\beta,\nonumber
    \end{align}
    so that 
    \begin{IEEEeqnarray}{rCl}
        &&\left|\frac{(y-G_{\theta}(v;x))}{\sqrt{2\pi}\gamma^{3/2}}\exp\left(-\frac{(y-G_{\theta}(v;x))^2}{2\gamma}\right)\frac{\partial}{\partial \log(\sigma)}G_{\theta}(v;x)\right|\nonumber\\
        &=&\left| \frac{(y-G_{\theta}(v;x))}{\sqrt{2\pi}\gamma^{3/2}}\exp\left(-\frac{(y-G_{\theta}(v;x))^2}{2\gamma}\right)\left(G_{\theta}(v; x) - x_j^\top\beta\right)\right|\nonumber\\
        &=&\left| \frac{(-G_{\theta}(v;x)^2 + G_{\theta}(v;x)(y+x_j^\top\beta) - yx_j^\top\beta) }{\sqrt{2\pi}\gamma^{3/2}}\exp\left(-\frac{(y-G_{\theta}(v;x))^2}{2\gamma}\right)\right|\nonumber\\
        &\leq& \max_{z\in\mathbb{R}}\left|\frac{(-z^2 + z(y+x_j^\top\beta) - yx_j^\top\beta) }{\sqrt{2\pi}\gamma^{3/2}}\exp\left(-\frac{(y-z)^2}{2\gamma}\right)\right|\nonumber
    \end{IEEEeqnarray}
    and
    \begin{IEEEeqnarray}{rCl}
        &&\bigg|\frac{(G_{\theta}(v;x)-G_{\theta}(v^{\prime};x))}{\sqrt{2\pi}\gamma^{3/2}}\exp\left(-\frac{(G_{\theta}(v;x)-G_{\theta}(v^{\prime};x))^2}{2\gamma}\right)
        \times 
        \nonumber \\&& \quad \left(\frac{\partial}{\partial \log(\sigma)}G_{\theta}(v;x) - \frac{\partial}{\partial \log(\sigma)}G_{\theta}(v^{\prime};x)\right) \bigg|\nonumber\\
        &=&\left|\frac{(G_{\theta}(v;x)-G_{\theta}(v^{\prime};x))^2}{\sqrt{2\pi}\gamma^{3/2}}\exp\left(-\frac{(G_{\theta}(v;x)-G_{\theta}(v^{\prime};x))^2}{2\gamma}\right)\right|\nonumber\\
        &\leq&\frac{2}{\sqrt{2\pi\gamma}}\exp(-1)\nonumber
    \end{IEEEeqnarray}
    as required. The final step is to establish that $Q_{d+1}(y_i, \theta; x_i)$ is finite for all $y_1, x_i$ and $\theta$. A continuous function on a bounded domain is bounded, and therefore it is sufficient to check that $\left|\frac{(-z^2 + z(y_i+x_j^\top\beta) - y_ix_j^\top\beta) }{\sqrt{2\pi}\gamma^{3/2}}\exp\left(-\frac{(y_i-z)^2}{2\gamma}\right)\right|$ does not diverge as $z\rightarrow\pm\infty$. As $z\rightarrow\pm\infty$, for fixed $y_i$, $x_i$ and all $\beta$
    \begin{IEEEeqnarray}{rCl}
        \left|\frac{(-z^2 + z(y_i+x_j^\top\beta) - y_ix_j^\top\beta) }{\sqrt{2\pi}\gamma^{3/2}}\exp\left(-\frac{(y_i-z)^2}{2\gamma}\right)\right| = O\left(z^2\exp\left(z^2\right)\right)\nonumber
    \end{IEEEeqnarray}
    which satisfies the requirement.
\end{proof}

To show that the term in question is bounded in the copula example, define the following functions
$$
V(z_i,z_j,\theta)=\rho(\theta)z_i+\sqrt{1-\rho(\theta)^2}z_j,\;b(\theta)=\frac{\rho(\theta)}{\sqrt{1 - \rho(\theta)}}.
$$

\begin{corollary}
    \label{cor:MMD-copula}
    Let $p_{\theta}$ be the density of a bi-variate Gaussian copula with unit marginal variances and correlation $\rho(\theta) = \frac{2}{1+\exp(-\theta)} - 1\in(-1, 1)$, and take $y_i\in\mathbb{R}^2$. Let $\hat{u}_i = \{F_{n,1}(y_{i1}), F_{n,2}(y_{i2})\}$ with $F_{n,l}(t) = \frac{1}{n}\sum_{i=1}^n\1\{y_{il}\le t\}$ for $l=1,2$. 
    For kernel $\tilde{k}: [0,1]^2\times[0,1]^2\mapsto \mathbb{R}$ so that $\tilde{k}(\hat{u}, u) = k(\Phi^{-1}(\hat{u}), \Phi^{-1}(u))$ where $k(\cdot, \cdot)$ is the RBF kernel  with hyperparameter $\gamma$, and $\Phi^{-1}$ is the inverse standard Gaussian cumulative distribution function, Lemma \ref{lem:MMD_indirect} is satisfied by taking $Q_l(y_i, \theta) = Q(\hat{u}_i, \theta)$ and $R_l(\theta) = R(\theta)$ for $l=1,2$, where
\begin{IEEEeqnarray}{rCl}
&&Q(\hat{u}_i, \theta) =\nonumber\\
&&\max_{z_1, z_2\in\mathbb{R}} \frac{1}{2\pi\gamma^2}\exp\left(-\frac{\{\Phi^{-1}(\hat{u}_{i1}) - z_1\}^2+(\Phi^{-1}(\hat{u}_{i2}) - V(z_1,z_2,\theta))^2}{2\gamma}\right)\frac{2\exp(-\theta)}{(1+\exp(-\theta))^2}\nonumber\\
&&\quad\times |\Phi^{-1}(\hat{u}_{i2}) - V(z_1,z_2,\theta))|\times|z_1 - b(\theta)z_2| \nonumber \\
&&R(\theta) = \nonumber\\
&&\max_{z_1, z_2, z_3, z_4\in\mathbb{R}} \frac{1}{2\pi\gamma^2}\exp\left(-\frac{\{V(z_1,z_2,\theta) - V(z_3,z_4,\theta)\}^2+(z_1 - z_3)^2}{2\gamma}\right)\frac{2\exp(-\theta)}{(1+\exp(-\theta))^2}\nonumber\\&&\quad \times|V(z_1,z_2,\theta) - V(z_3,z_4,\theta)|\times |\left(z_1 - b(\theta)z_2\right) - \left(z_3 - b(\theta)z_4\right)|.\nonumber
\end{IEEEeqnarray}

\end{corollary}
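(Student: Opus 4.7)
My plan is to verify the claimed bounds by reducing the problem to the Gaussian scale, directly invoking Corollary \ref{lem:MMDRBF_indirect} with $p=2$, and then establishing that the resulting supremum expressions are finite. Because $\tilde k(\hat u, u) = k(\Phi^{-1}(\hat u), \Phi^{-1}(u))$, the MMD on copula scale is the RBF MMD in $\mathbb{R}^2$ applied to the transformed ``observations'' $\Phi^{-1}(\hat u_i)$ and transformed samples $\Phi^{-1}(u_k)$. The standard reparametrisation of bivariate Gaussian copula sampling with correlation $\rho(\theta)$ then realises these transformed samples as $\Phi^{-1}(u_k) = G_\theta(z_{k,1}, z_{k,2}) = (z_{k,1}, V(z_{k,1}, z_{k,2}, \theta))$ with $(z_{k,1}, z_{k,2}) \stackrel{\text{iid}}{\sim} N(0,1)$, supplying a generator whose input distribution $p_v$ is standard bivariate Gaussian and is independent of $\theta$.

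Next, I would compute $\partial_\theta G_\theta$ explicitly. The first coordinate gives $\partial_\theta G_{\theta,1}(z_1, z_2) = 0$, while differentiating $V(z_1, z_2, \theta) = \rho(\theta) z_1 + \sqrt{1-\rho(\theta)^2}\, z_2$ in the second coordinate yields
\[
\partial_\theta G_{\theta,2}(z_1, z_2) = \dot\rho(\theta)\,\bigl[z_1 - b(\theta)\, z_2\bigr], \qquad \dot\rho(\theta) = \frac{2 e^{-\theta}}{(1+e^{-\theta})^2},
\]
with $b(\theta)$ interpreted as the coefficient produced by this differentiation. Plugging this and the explicit RBF-gradient formula from Corollary \ref{lem:MMDRBF_indirect} into its inequalities, only the $l=2$ term in the inner sum contributes, and the pointwise expressions inside the absolute values are exactly the integrands defining $Q(\hat u_i, \theta)$ and $R(\theta)$ in the statement. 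Taking the supremum over $(z_1, z_2) \in \mathbb{R}^2$ (respectively over $(z_1, z_2, z_3, z_4) \in \mathbb{R}^4$) then produces the stated formulas.

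The principal obstacle is to confirm that these suprema are finite, so that $Q$ and $R$ are genuine computational bounds. For $Q$, the exponential factor has the form $\exp\bigl(-[(a_1 - z_1)^2 + (a_2 - V(z_1, z_2, \theta))^2]/(2\gamma)\bigr)$ with $a = \Phi^{-1}(\hat u_i)$ fixed. The linear map $T_\theta \colon (z_1, z_2) \mapsto (z_1, V(z_1, z_2, \theta))$ has determinant $\sqrt{1-\rho(\theta)^2} > 0$ whenever $\rho(\theta) \in (-1,1)$, so $T_\theta$ is a bijection of $\mathbb{R}^2$ and the quadratic form in the exponent is positive-definite in $(z_1, z_2)$, delivering genuine Gaussian decay as $\|(z_1, z_2)\| \to \infty$. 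Since the prefactor $|a_2 - V(z_1, z_2, \theta)| \cdot |z_1 - b(\theta) z_2|$ is at most quadratic in $(z_1, z_2)$, the whole expression is continuous and vanishes at infinity, and so attains its supremum on $\mathbb{R}^2$. Applying the same argument to the linear bijection $T_\theta \oplus T_\theta$ on $\mathbb{R}^4$ yields finiteness of $R(\theta)$. Once this is established, the inequalities required by Corollary \ref{lem:MMDRBF_indirect}, and hence by Lemma \ref{lem:MMD_indirect}, follow immediately.
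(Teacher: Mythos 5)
Your derivation of the formulas follows essentially the same route as the paper: the same Box--Muller-free Gaussian-copula generator $G_{\theta,2}(v)=\Phi(\rho(\theta)v_1+\sqrt{1-\rho(\theta)^2}\,v_2)$, the same cancellation $\Phi^{-1}\circ\Phi=\mathrm{id}$ moving everything to the Gaussian scale, the same computation $\partial_\theta V(z_1,z_2,\theta)=\dot\rho(\theta)[z_1-b(\theta)z_2]$, and the observation that only the second coordinate of the generator depends on $\theta$. Where you genuinely diverge is the finiteness of the suprema. The paper establishes this by exhaustive casework on which components of $z$ diverge and with what signs (three cases for $\mathcal{Q}$, three for $\mathcal{R}$); you instead note that the exponent of $Q$ is $-\|a-T_\theta z\|^2/(2\gamma)$ for the invertible linear map $T_\theta(z_1,z_2)=(z_1,V(z_1,z_2,\theta))$ with $\det T_\theta=\sqrt{1-\rho(\theta)^2}>0$, so the quadratic form is positive definite and the Gaussian decay dominates the quadratic prefactor. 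For $Q$ this is cleaner and fully correct, and it buys a shorter, more conceptual proof than the paper's case analysis.

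There is, however, a small but genuine gap in your treatment of $R$. The exponent there is $-[\{V(z_1,z_2,\theta)-V(z_3,z_4,\theta)\}^2+(z_1-z_3)^2]/(2\gamma)$, which as a quadratic form on $\mathbb{R}^4$ has rank two: it depends only on the differences $w_1=z_1-z_3$ and $w_2=z_2-z_4$ and vanishes identically on the two-dimensional subspace $w=0$. Consequently the integrand does \emph{not} vanish as $\|(z_1,z_2,z_3,z_4)\|\to\infty$ (it is constant along affine subspaces of fixed $w$), and the claim that ``the same argument with the bijection $T_\theta\oplus T_\theta$ on $\mathbb{R}^4$'' applies is not literally true. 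The repair is immediate: the polynomial prefactor $|V(z_1,z_2,\theta)-V(z_3,z_4,\theta)|\cdot|(z_1-b(\theta)z_2)-(z_3-b(\theta)z_4)|$ also depends only on $(w_1,w_2)$, so the supremum over $\mathbb{R}^4$ equals a supremum over $(w_1,w_2)\in\mathbb{R}^2$ of an expression whose exponent is $-\|T_\theta w\|^2/(2\gamma)$; your positive-definiteness argument then applies verbatim in two dimensions. With that one-line reduction inserted, your proof is complete and arguably tidier than the paper's.
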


\begin{proof}
Sampling $u_{1:b} \sim p_{\theta}$ from a bivariate Gaussian copula can be achieved by first sampling $v_{1:b}$ with $v_k = \{v_{k1}, v_{k2}\}\sim p_v$ and $p_v(v) = \mathcal{N}(v_{1}; 0, 1)\mathcal{N}(v_{2}; 0, 1)$ and setting $u_k = \{u_{k1}, u_{k2}\} = \{G_{\theta,1}(v_k), G_{\theta,2}(v_k)\}$ with
\begin{align}
    G_{\theta,1}(v_k) &= \Phi(v_{k1}),~ G_{\theta,2}(v_k) = \Phi(\rho(\theta) v_{k1} + \sqrt{1-\rho(\theta)^2}v_{k2}).\nonumber
\end{align}
where $\Phi$ is the standard Gaussian cumulative distribution function.

For this example, as $\Phi^{-1}(\Phi(x)) = x$ it is convenient to write 
\begin{IEEEeqnarray}{rCl}
&&\tilde{k}(\hat{u}_i, G_{\theta}(v_k)) =\nonumber\\
&&\frac{1}{2\pi\gamma^2}\exp\left(-\frac{(\Phi^{-1}(\hat{u}_{i1}) - v_{k1}))^2}{2\gamma}\right)\exp\left(-\frac{(\Phi^{-1}(\hat{u}_{i2}) - (\rho(\theta)v_{k1} + \sqrt{1-\rho(\theta)^2}v_{k2})))^2}{2\gamma}\right)\nonumber
\end{IEEEeqnarray}
and to seek to directly bound $\frac{\partial}{\partial\theta}\tilde{k}(\hat{u}_i, G_{\theta}(v_k))$. Proceeding as such,
\begin{align*}
&\frac{\partial}{\partial\theta}\tilde{k}(\hat{u}_i, G_{\theta}(v_k))\\
=&\frac{1}{2\pi\gamma^2}\exp\left(-\frac{(\Phi^{-1}(\hat{u}_{i1}) - v_{k1})^2}{2\gamma}\right)\exp\left(-\frac{(\Phi^{-1}(\hat{u}_{i2}) - (\rho(\theta)v_{k1} + \sqrt{1-\rho(\theta)^2}v_{k2})))^2}{2\gamma}\right)\\
&\cdot(\Phi^{-1}(\hat{u}_{i2}) - (\rho(\theta)v_{k1} + \sqrt{1-\rho(\theta)^2}v_{k2})))\left(v_{k1} - b(\theta)v_{k2}\right)\frac{2\exp(-\theta)}{(1+\exp(-\theta))^2}\\
\leq&\max_{z_1, z_2\in\mathbb{R}}\frac{1}{2\pi\gamma^2}\exp\left(-\frac{(\Phi^{-1}(\hat{u}_{i1}) - z_1))^2}{2\gamma}\right)\exp\left(-\frac{(\Phi^{-1}(\hat{u}_{i2}) - V(z_1,z_2,\theta)))^2}{2\gamma}\right)\frac{2\exp(-\theta)}{(1+\exp(-\theta))^2}\nonumber\\
&\quad \times|\Phi^{-1}(\hat{u}_{i2}) - V(z_1,z_2,\theta)|\times|z_1 - b(\theta)z_2| .\nonumber
\end{align*}

Similarly,
\begin{IEEEeqnarray}{rCl}
&&\tilde{k}(G_{\theta}(v_k), G_{\theta}(v_{k^{\prime}})) = \nonumber\\
&&\frac{1}{2\pi\gamma}\exp\left(-\frac{((\rho(\theta)v_{k1} + \sqrt{1-\rho(\theta)^2}v_{k2}) - (\rho(\theta)v_{k^{\prime}1} + \sqrt{1-\rho(\theta)^2}v_{k^{\prime}2}))^2}{2\gamma}\right)\nonumber\\
&&\cdot\exp\left(-\frac{(v_{k1} - v_{k^{\prime}1})^2}{2\gamma}\right)\nonumber
\end{IEEEeqnarray}
and therefore
\begin{align*}
&\frac{\partial}{\partial\theta}\tilde{k}(G_{\theta}(v_k), G_{\theta}(v_{k^{\prime}}))\\
=&-\frac{1}{2\pi\gamma^2}\exp\left(-\frac{((\rho(\theta)v_{k1} + \sqrt{1-\rho(\theta)^2}v_{k2}) - (\rho(\theta)v_{k^{\prime}1} + \sqrt{1-\rho(\theta)^2}v_{k^{\prime}2}))^2}{2\gamma}\right)\\
&\cdot (\rho(\theta)v_{k1} + \sqrt{1-\rho(\theta)^2}v_{k2}) - (\rho(\theta)v_{k^{\prime}1} + \sqrt{1-\rho(\theta)^2}v_{k^{\prime}2})\\
&\cdot \left(\left(v_{k1} - b(\theta)v_{k2}\right) - \left(v_{k^{\prime}1} - b(\theta)v_{k^{\prime}2}\right)\right)\frac{2\exp(-\theta)}{(1+\exp(-\theta))^2}\\
&\cdot\exp\left(-\frac{(v_{k1} - v_{k^{\prime}1})^2}{2\gamma}\right)\\
\leq&\max_{z_1, z_2, z_3, z_4\in\mathbb{R}} \exp\left(-\frac{(V(z_1,z_2,\theta) - V(z_3,z_4,\theta))^2}{2\gamma}\right)\exp\left(-\frac{(z_1 - z_3)^2}{2\gamma}\right)\frac{2\exp(-\theta)}{(1+\exp(-\theta))^2}\nonumber\\
&\quad \cdot \frac{1}{2\pi\gamma^2}|V(z_1,z_2,\theta) - V(z_3,z_4,\theta)|\times  |\left(z_1 - b(\theta)z_2\right) - \left(z_3 - b(\theta)z_4\right)|
\end{align*}
as required

The final task is to ensure that $Q(\hat{u}_i, \theta)$ and $R(\theta)$ are finite for all $z_1,z_2,z_3,z_4$, so that their maximum exists and is finite. To this end, let $z=(z_1,\dots,z_4)^\top$ and define the functions 
\begin{IEEEeqnarray}{rCl}
&&\mathcal{Q}(z_1,z_2,\theta)= \frac{1}{2\pi\gamma^2}\exp\left(-\frac{\{\Phi^{-1}(\hat{u}_{i1}) - z_1\}^2+(\Phi^{-1}(\hat{u}_{i2}) - V(z_1,z_2,\theta))^2}{2\gamma}\right)\frac{2\exp(-\theta)}{(1+\exp(-\theta))^2}\nonumber\\
&&\quad\times |\Phi^{-1}(\hat{u}_{i2}) - V(z_1,z_2,\theta))|\times|z_1 - b(\theta)z_2| \nonumber \\
&&\mathcal{R}(z_1,z_2,z_3,z_4,\theta) =\frac{1}{2\pi\gamma^2}\exp\left(-\frac{\{V(z_1,z_2,\theta) - V(z_3,z_4,\theta)\}^2+(z_1 - z_3)^2}{2\gamma}\right)\frac{2\exp(-\theta)}{(1+\exp(-\theta))^2}\nonumber\\&&\quad \times|V(z_1,z_2,\theta) - V(z_3,z_4,\theta)|\times |\left(z_1 - b(\theta)z_2\right) - \left(z_3 - b(\theta)z_4\right)|.\nonumber
\end{IEEEeqnarray}

Since both $\mathcal{Q}(\cdot,\theta)$ and $\mathcal{R}(\cdot,\theta)$ are continuous functions, for $\rho(\theta)\in(-1,1)$, they are bounded for $z$ lying in $\mathbb{R}/\{-\infty,\infty\}$. Therefore, we must only show that these functions do not diverge as the components of $z$ diverge. We analyze these functions separately, starting with $\mathcal{Q}$. 

\subsubsection*{Case: $\mathcal{Q}(z_1,z_2,\theta)$.}
Note that, that the function $V(z_1,z_2,\theta)$ in $\mathcal{Q}$ is linear in $z_1,z_2$. Now, we proceed by cases.  
\begin{enumerate}
    \item[Case 1.]  For fixed $z_i$, if $z_j\rightarrow\pm\infty$, the  terms, $|V(z_1,z_2,\theta)||z_1-b(\theta)z_2|$ diverge at most like like $z_j^2$. However, since $V(z_1,z_2,\theta)$ appears as a square in the exponential, the order of the term becomes $z_j^2\exp\{-z^2_j\}$ which clearly goes to zero as $z_j\rightarrow\pm\infty$. Since this term goes to zero as $z_j\rightarrow\pm\infty$ it must achieve its maximum at a point on the interior of $\mathbb{R}$, and it therefore bounded. 
    \item[Case 2.] For $z_i,z_j\rightarrow\pm\infty$, with the same sign, the term $|V(z_1,z_2,\theta)||z_1-b(\theta)z_2|$ can be bounded, up to constants, by 
    $|z_1|^2|z_2|^2$. However, again, since $V(z_1,z_2,\theta)$ appears as a square in the exponential, the order of the term is at most  $z_j^4\exp\{-z^2_j\}$, which again decays to zero as $z_j\rightarrow\pm\infty$. 
    \item[Case 3.] Lastly, since $V(z_1,z_2,\theta)$ is linear, we must consider what occurs as $z_i\rightarrow\infty$, and $z_j\rightarrow-\infty$. In this case, the terms $|V(z_1,z_2,\theta)||z_1-b(\theta)z_2|$ can again be bounded, up to constants, by 
    $|z_1|^2|z_2|^2$. However, the $V(z_1,z_2,\theta)$ term in the exponential may no longer diverge. However, in this case the first term in the exponential, $-(\Phi^{-1}(\hat{u}_{i1}-z_1)^2$ will still diverge so that that $\mathcal{Q}(z_1,z_2,\theta)$ remains bounded as both $z_1,z_2$ diverge. 
\end{enumerate}

\subsubsection*{Case: $\mathcal{R}(z_1,z_2,z_3,z_4\theta)$.} 
Let us simplify notation and write 
$$
V(z_1,z_2,z_3,z_4)=V(z_1,z_2)-V(z_3,z_4)= \rho(\theta)(z_1-z_3)+\sqrt{1-\rho(\theta)^2}(z_2-z_4),
$$ so that we have 
\begin{flalign*}
  \mathcal{R}(z,\theta)\asymp& \exp\left\{-\frac{V(z_1,z_2,z_3,z_4)^2-(z_1-z_3)^2}{2\gamma}\right\}|V(z_1,z_2,z_3,z_4)|(|z_1-z_3|+|z_2-z_4|)
\end{flalign*}  
Again, this is a continuous function in $z$ so it is bounded on bounded domains. However, for some component of $z\rightarrow\pm\infty$, the dominant terms in the expression become
\begin{flalign}
  \mathcal{R}(z,\theta)
\lesssim   &\exp\left\{-\frac{V(z_1,z_2,z_3,z_4)^2+(z_1-z_3)^2}{2\gamma}\right\}|V(z_1,z_2,z_3,z_4)|(|z_1-z_3|+|z_2-z_4|)\nonumber\\\asymp&\exp\{-(z_1-z_3)^2-(z_2-z_4)^2+2|z_1-z_3||z_2-z_4|\}\exp\{-|z_1-z_3|^2\}\nonumber\\&\times\left(|z_1-z_3|^2+|z_2-z_4|^2+2|z_1-z_3||z_2-z_4|\right)\nonumber \\&=\exp\{-\left[|z_1-z_3|-|z_2-z_4|\right]^2\}\exp\{-|z_1-z_3|^2\}\left(|z_1-z_3|+|z_2-z_4|\right)^2\label{eq:bound_copula}.
\end{flalign}From this we see that if either $|z_1-z_3|=O(1)$ but $|z_2-z_4|\rightarrow\pm\infty$, or the other way around, then both, then the bound is finite. Thus, there are only three remaining cases to consider.
\begin{enumerate}    
    \item[Case 1.] If $|z_1-z_3|=C$ but $|z_2-z_4|\rightarrow\pm\infty$, then the dominant term is of the form 
  \begin{flalign*}
  \mathcal{R}(z,\theta)
\lesssim   &   \exp\{-(z_2-z_4)^2+2C|z_2-z_4|\}\times \left(C+|z_2-z_4|^2+2C|z_2-z_4|\right)\\\lesssim&\exp\{-(z_2-z_4)^2\}|z_2-z_4|^2.
  \end{flalign*}Clearly, this term converges to zero as $|z_2-z_4|\rightarrow\infty$.
   \item[Case 2.]  If $|z_2-z_4|=C$ but $|z_1-z_3|\rightarrow\pm\infty$, then from the symmetry of the bound in equation \eqref{eq:bound_copula} the result follows precisely as in Case 1.
     \item[Case 3.] If $|z_2-z_4|\rightarrow\infty$ and $|z_1-z_3|\rightarrow\pm\infty$, then,  the dominant term can be written as 
     $$
     \exp\{-(z_1-z_3)^2\}\exp\{-[|z_1-z_3|-|z_2-z_4|]^2\}(|z_1-z_3|+|z_2-z_4|\}^2.
     $$ Clearly, if $(|z_1-z_3|-|z_2-z_4|)^2\rightarrow\infty$, the result follows. If $|z_1-z_3|-|z_2-z_4|=o(1)$, then the dominant terms becomes 
     $$
     \exp\{-(z_1-z_3)^2\}(|z_1-z_3|+|z_2-z_4|\}^2.
     $$However, as $|z_1-z_3|,|z_2-z_4|\rightarrow \infty$ this term converges to zero. 
\end{enumerate}
\end{proof}

\subsection{Extension to sub-sampling}
\label{sec:subsampling}

Following on from the discussion in Section \ref{sec:previous}, for both the $\beta$-divergence loss Lemma \ref{lem:betaD_direct} and MMD loss Lemma \ref{lem:MMD_indirect}, the function $\varphi_{b,n}(\theta)$ satisfying \eqref{eq:needed} contains an average over the number of observations $n$. That is to say that in both cases we can write $\varphi_{b,n}(\theta) = \frac{1}{n}\sum_{i=1}^n\varphi_{b,i}(\theta)$ and as a result, $\frac{1}{\tilde{n}}\sum_{i\in B_{\tilde{n}}}\varphi_{b,i}(\theta)$, 
where $B_{\tilde{n}}\subset\{1, \ldots, n\}$ is a subset of indices $1, \ldots, n$  of size $\tilde{n}$ drawn uniformly at random, is an unbiased estimate of $\frac{\partial}{\partial\theta}\MD_n(\theta)$ and therefore also satisfies \eqref{eq:needed}. 

Defining $\Psi_{b,\tilde{n}}'(\theta) := - \frac{\partial}{\partial\theta}\log \pi(\theta) + \omega  \frac{1}{\tilde{n}}\sum_{i\in B_{\tilde{n}}}\varphi_{b,i}(\theta) $ 
we can construct a zig-zag algorithm with  Poisson switching rate functions ${\tilde{\lambda}}^{\tilde{n}}_{j}(\theta,\nu) := \max\left\{ \nu_j \cdot \left[\Psi_{b,\tilde{n}}'(\theta + \nu \cdot t)\right]_j, 0 \right\}$ that continues to sample from $\pi(\theta \mid \MD_n)$ but uses observation sub-sampling as well as pseudo-observation generation to unbiasedly estimate $\frac{\partial}{\partial\theta}\MD_n(\theta)$. This follows the original sub-sampling construction in \cite{bierkens2019zig} (their Theorem 4.1), and is presented in Algorithm \ref{Alg:ZigZag_EstimatedLossSubSampling}.

\begingroup
\singlespacing
\begin{algorithm}[H]
\caption{Zig-zag sampler for $\pi(\theta\mid\MD_n)$ using unbiasedness condition \eqref{eq:needed} and sub-sampling observations}\label{Alg:ZigZag_EstimatedLossSubSampling}
\KwIn{Initial $(\theta^{(0)},\nu^{(0)})$, $\tau^0 = 0$,  $b \in \mathbb{N}$, $\varphi_{b, n} = \frac{1}{n}\sum_{i=1}^n\varphi_{b,i}(\theta)$  satisfying \eqref{eq:needed},   $\hat{\Lambda}_j$ satisfying \eqref{eq:computational-bound-estimated} and batch size $1 \leq \tilde{n}\leq n$} 
\vspace*{0.1cm}
\For{$k=0,1,\ldots$}{
    %
    Sample $\tau_j \sim \mathbb{P}(\tau_j \geq t) = \exp\left\{-\int_0^t\hat{\Lambda}_j(\theta^{(\tau^{k})} + \nu^{(\tau^{k})}\cdot s, \nu^{(\tau^{k})}) ds\right\}$ for $j=1,2,\dots, d$ \\
    Set $j^* = \argmin_{j}\tau_j$; update $\tau^{k+1} = \tau^k + \tau_{j^*}$ and $\theta^{(\tau^{k+1})} = \theta^{(\tau^{k})} + \nu^{(\tau^{k})} \cdot  \tau_{j^*}$\\
    Sample $u_{1:b} \sim P_{\theta^{(\tau^{k+1})}}$ and $B_{\tilde{n}}\subset \{1, \ldots, n\}$ uniformly at random; set 
    $\hat{p}_{k+1} =  \dfrac{{\tilde{\lambda}}^{\tilde{n}}_{j^*}(\theta^{(\tau^k)} + \nu^{(\tau^k)}\cdot \tau_{j^*}, \nu^{(\tau^k)})}{ \hat{\Lambda}_{j^*}(\theta^{(\tau^k)} + \nu^{(\tau^k)}\cdot \tau_{j^*}, \nu^{(\tau^k)})}$;
    draw 
    $Z_{k+1} \sim \operatorname{Ber}(\hat{p}_{k+1})$
    \\
    \eIf{$Z_{k+1} = 1$}{Set $\nu_{j^*}^{(\tau^{k+1})} = - \nu_{j^*}^{(\tau^{k})}$ and $\nu_{j}^{(\tau^{k+1})} =  \nu_{j}^{(\tau^{k})}$ for $j \neq j^*$}{Set $\nu^{(\tau^{k+1})} = \nu^{(\tau^{k})}$}
}
\Return $(\tau^k, \theta^{(\tau^k)}, \nu^{(\tau^k)})_{k=0}^{\infty}$
\end{algorithm}
\endgroup


\section{Supplementary Results and Figures}

\subsection{Block P-MCMC Algorithmic Details}\label{app:bPMCMC}
Denote $u_{1:m}^{1:J}$ as the random numbers required to estimate the log-loss $\MD_{m,n}(\theta, u_{1:m}^{1:J})$.  For the regression examples, we have $J=n$ and for the copula example we have $J=1$.  We employ different blocking strategies for the regression and copula examples.  For the regression examples, at each iteration of MCMC, we update the random numbers $u_{1:m}^j$ for some $j \in \{1,\ldots,J\}$, which is drawn from a discrete uniform distribution.  We do this since often $n \gg m$ and hence fewer random numbers are updated, increasing the correlation between log-loss estimates. For the copula example, we update the random numbers $u_a^{1:J}$ for some $a \in \{1,\ldots,m\}$ (recalling that $J=1$).  For the copula example, each $u_a^{1:J}$ is only 2-dimensional, and hence only 2 random numbers are changed at each MCMC iteration.  For more details on the block P-MCMC we employ, see Algorithm \ref{Alg:block_pmcmc}.

\begingroup
\singlespacing
\begin{algorithm}[H]
\caption{Block P-MCMC sampler for $\pi(\theta\mid\MD_{m,n})$}\label{Alg:block_pmcmc}
\KwIn{Initial $\theta^{(0)}$, $m \in \mathbb{N}$, random numbers for estimating the log-loss $u_{1:m}^{1:J}$, learning rate $w$, number of MCMC iterations $S$,  random walk covariance matrix $\Sigma$.} 
\vspace*{0.1cm}
\For{$s=1,2,\ldots,S$}{
    Set proposed $\tilde{u}_{1:m}^{1:J} = u_{1:m}^{1:J}$ \\ 
    For the regression examples, sample $k$ from a discrete uniform distribution $j \sim \mathcal{U}(1,J)$, draw $\tilde{u} \sim p(u_{1:m})$ and set $\tilde{u}_{1:m}^{j} = \tilde{u}$ \\ 
    For the copula example, sample $a$ from a discrete uniform distribution $a \sim \mathcal{U}(1,m)$, draw $\tilde{u} \sim p(u)$ and set $\tilde{u}_{a}^{1:J} = \tilde{u}$ \\
    Propose $\tilde{\theta} \sim \mathcal{N}(\theta^{s-1}, \Sigma)$ and compute the Metropolis-Hastings ratio:
    \begin{align*}
        \alpha &= \frac{\exp\{-w \cdot  \MD_{m,n}(\tilde{\theta},\tilde{u}_{1:m}^{1:J}) \} \pi(\tilde{\theta})}{\exp \{ -w \cdot \MD_{m,n}(\theta^{s-1},u_{1:m}^{1:J}) \} \pi(\theta^{s-1})}
    \end{align*} \\
    \If{$\mathcal{U}(0,1) < \alpha$}{
        Set $\theta^{s} = \tilde{\theta} $ \\
        Set $u_{1:m}^{1:J} = \tilde{u}_{1:m}^{1:J}$ \\
    }\Else{
        Set $\theta^{s} = \theta^{s-1} $ \\
    }
    
}
\Return $\{ \theta_s \}_{s=0}^S$
\end{algorithm}
\endgroup

\subsection{Outlier-Robust Inference in Regression}\label{Supp:FigsMMDReg}

Figure \ref{Fig:regression_misspecification} demonstrates that the tails of the data generated in Section \ref{Sec:RegressionMMD} are heavier than the assumed Gaussian error distribution. 

\begin{figure}[!ht]
\begin{center}
\includegraphics[trim= {0.0cm 0.00cm 0.0cm 0.0cm}, clip,  width=0.49\columnwidth]{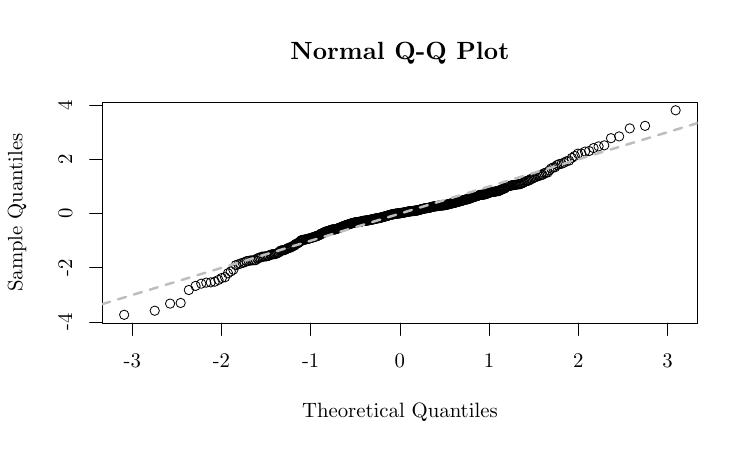}
\includegraphics[trim= {0.0cm 0.00cm 0.0cm 0.0cm}, clip,  width=0.49\columnwidth]{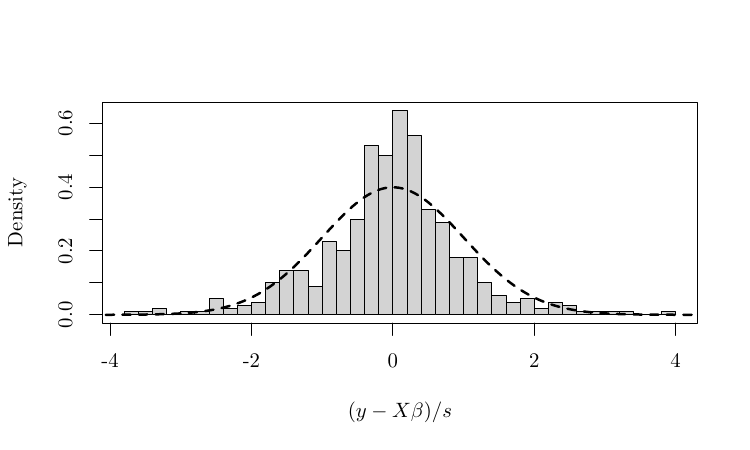}
\caption{Q-Q Normal plot (left) and histogram (right) of standardised residuals estimated by Ordinary Least Squares compared with the standard normal distribution.
}
\label{Fig:regression_misspecification}
\end{center}
\end{figure}

\subsection{Full posterior comparison for MMD regression}
\label{sec:full-MMD-regression-comparison}

Figures \ref{Fig:regression_PM_vs_ZZ_n100} and \ref{Fig:regression_ZZ_n100} provide a full comparison of the target posterior densities of the bP-MCMC and zig-zag algorithm for different $m$ and $b$. 

\begin{figure}[H]
\begin{center}
\includegraphics[scale=0.8]{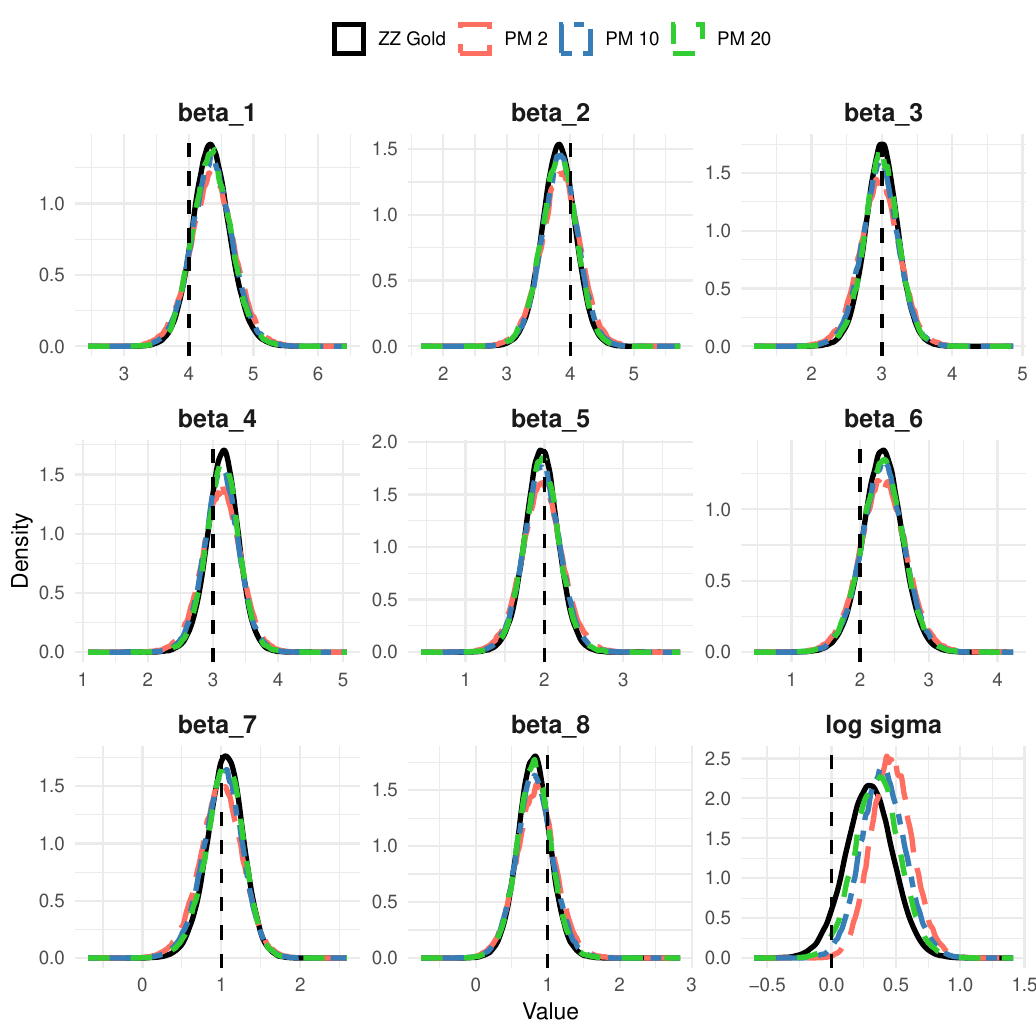}
\caption{$\MMD$-Bayes posterior density for $\beta$ and $\log(\sigma)$ of the Gaussian regression model based on sample size $n=100$ as estimated by a long run of the zig-zag sampler (ZZ Gold) and block pseudo-marginal sampler with $m=2$ (PM 2), $m=10$ (PM 10) and $m=20$ (PM 20).}
\label{Fig:regression_PM_vs_ZZ_n100}
\end{center}
\end{figure}

\begin{figure}[H]
\begin{center}
\includegraphics[scale=0.8]
{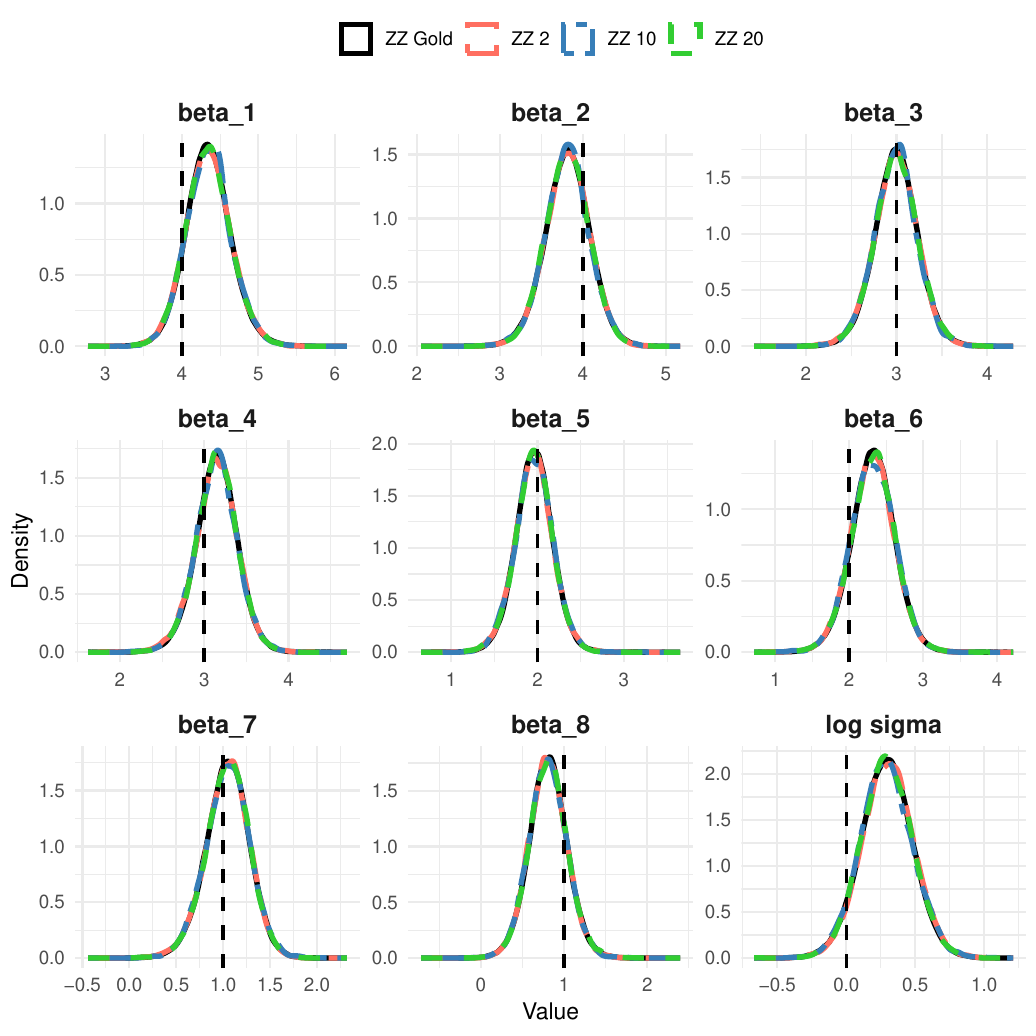}
\caption{$\MMD$-Bayes posterior density for $\beta$ and $\log(\sigma)$ of the Gaussian regression model based on sample size $n=100$ as estimated by the zig-zag sampler with $b=2$ (ZZ 2), $b= 10$ (ZZ 10) and $b = 20$ (ZZ 20). Shown also are the results for a long run of the zig-zag sampler (ZZ Gold)}
\label{Fig:regression_ZZ_n100}
\end{center}
\end{figure}

\subsection{Uniform accuracy of Posterior Approximation}\label{Sec:unifGap}
In \Cref{Sec:PoissonRegression}, we showed empirically that the difference between $\overline\pi(\theta\mid\MD_{m,n})$ and $\pi(\theta\mid \MD_n)$ was negligible in the specific case of Poisson regression. In this section, we show that this behavior is due to Jensen's gap between $\overline\pi(\theta\mid\MD_{m,n})$ and $\pi(\theta\mid \MD_n)$ being quite small in this example.

In Poisson regression the mass function for $y_i$, conditional on predictor variables $x_i$, is given by $p_{\theta}(y_i; x_i)=[e^{x_i^\top\theta}]^{y_i}e^{-e^{x_i^\top\theta}}/y_i!$, and the $\beta$D-loss involves the intractable expectation:
$$
\sum_{i=1}^{n}\E_{u\sim p_\theta(\cdot;x_i)}[p_\theta (u;x_i)^\beta]=\sum_{i=1}^{n}\sum_{y=0}^{\infty}p_\theta (y;x_i)^{1+\beta},
$$
which for $\beta > 0$ is not available in closed form. However, it is simple to estimate this term via Monte Carlo sampling by drawing $U_j\stackrel{iid}{\sim} p_\theta(\cdot;x_i)$, for $j=1,\dots,m$, and then using the estimated loss function 
$$
\MD_{m, n}^\beta\left(\theta, y_{1: n}\right)=\frac{1}{n}\sum_{i=1}^{n}\frac{1}{m} \sum_{j=1}^m p_\theta\left(u_j;x_i\right)^\beta-\left(1+\frac{1}{\beta}\right) \frac{1}{n}\sum_{i=1}^n p_\theta\left(y_i;x_i\right)^\beta
$$
to conduct inference on $\theta$.

The accuracy of bP-MCMC in this example is related to the specific nature of the Poisson regression model and the $\beta$-divergence loss. To see why, recall that the difference between the exact and P-MCMC posteriors is the result of Jensen's inequality, see Section \ref{Sec:research_gap}; also, recall that only the first term in the $\beta$-divergence loss is estimated (i.e., the term $\sum_{y=0}^{\infty}p_\theta(y,x_i)^{1+\beta}$). Thus, the difference between $\pi(\theta\mid\MD_n)$ and $\overline\pi(\theta\mid \MD_{m,n})$ only depends on this single term, and the difference between the kernel of the posteriors is described by Jensen's gap:
\begin{flalign*}
\text{JGap}_m(\theta):=& \E_{u_{1:m}\sim P_\theta}\exp\left\{\sum_{i=1}^{n}\frac{1}{m}\sum_{j=1}^{m}p_\theta(u_j;x_i)^\beta\right\}-\exp\left\{\sum_{i=1}^{n}\E_{u_{1:m}\sim P_\theta}p_\theta(u_j;x_i)^\beta\right\}\\&\ge0.
\end{flalign*}
The following result shows that a uniform bound on $\text{JGap}_m(\theta)$ can be derived that depends only on $m$ and is quite small in our examples. 

\begin{lemma}\label{Lem:unif_J_Gap}
Under the setup described in \Cref{Sec:PoissonRegression}, $
\sup_{\theta\in\Theta}\mathrm{JGap}_m(\theta)
\times\pi(\theta)\le 1.6\times \frac{e}{4m}.
$    
\end{lemma}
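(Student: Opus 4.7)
The plan is to decompose the Jensen gap via the identity
\[
\mathrm{JGap}_m(\theta) = e^{\bar Z_\theta}\bigl(\E_{u_{1:m} \sim P_\theta}[e^{Z_\theta - \bar Z_\theta}] - 1\bigr),
\]
where $Z_\theta$ is the exponent in the first expectation and $\bar Z_\theta := \E_{u_{1:m}\sim P_\theta}[Z_\theta]$, and then to bound the centered-MGF factor and the prefactor $e^{\bar Z_\theta}\pi(\theta)$ separately.

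For the centered MGF, I would exploit that the Poisson pmf is bounded in $[0,1]$, so each $p_\theta(u_j; x_i)^\beta \in [0,1]$. Consequently $Z_\theta - \bar Z_\theta$ is a sum of independent centered bounded random variables, each scaled by $1/m$. A second-order Taylor expansion of the exponential (equivalently, Hoeffding's MGF inequality for bounded summands) then yields a bound of the form $\E[e^{Z_\theta - \bar Z_\theta}] - 1 \le C/m$ with an explicit constant. The $1/m$ scaling is driven by the Bernoulli-type variance bound $\Var(p_\theta(u_j; x_i)^\beta) \le 1/4$, which, averaged over $m$ independent samples, becomes $1/(4m)$; combined with the local factor $\sup_{x \in [0,1]} e^x/2 = e/2$, this produces the $e/(4m)$ appearing in the claim.

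To control the prefactor $e^{\bar Z_\theta}\pi(\theta)$, I would argue that $\bar Z_\theta$ cannot remain large wherever $\pi(\theta)$ is non-negligible. Specifically, the Poisson pmf satisfies $p_\theta(u;x_i) \lesssim (2\pi\lambda_i)^{-1/2}$ for $\lambda_i = e^{x_i^\top \theta}$ large (by Stirling) and $p_\theta(0;x_i) = e^{-\lambda_i}$ is close to $1$ only when $\lambda_i$ is small; in either extreme regime, the expectation $\E_u[p_\theta(u;x_i)^\beta]$ is controlled. Combined with the Gaussian decay of $\pi(\theta)$, this gives a uniform bound on $e^{\bar Z_\theta}\pi(\theta)$, whose sharpest numerical value is the remaining constant $1.6$.

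The main obstacle will be this last step: one must rule out a regime in which $\pi(\theta)$ is moderately large while $\bar Z_\theta$ simultaneously grows, and carry out the supremum with tight enough constants to collapse the product to $1.6 \cdot e/(4m)$. This requires jointly exploiting the decay properties of the Poisson pmf (through $\lambda_i = e^{x_i^\top\theta}$), the Gaussian prior, and the boundedness of $p_\theta^\beta$ in $[0,1]$ at the same time, rather than pessimistically combining individual worst cases.
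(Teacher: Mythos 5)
Your treatment of the Jensen gap itself is essentially the paper's: the paper sets $Z_m=\frac1m\sum_j p_\theta(u_j;x_i)^\beta\in(0,1)$, invokes Theorem~1 of Liao and Berg's sharpened Jensen inequality to get $\mathrm{JGap}_m(\theta)\le \sup_{z\in(0,1)}e^{z}\cdot\Var(Z_m)$, and then uses independence plus Popoviciu's inequality ($\Var(p_\theta(u;x_i)^\beta)\le 1/4$ for a $[0,1]$-valued variable) to conclude $\mathrm{JGap}_m(\theta)\le e/(4m)$ \emph{uniformly in} $\theta$. Your second-order Taylor/Hoeffding-MGF argument is the same idea in different clothing, though note an arithmetic wrinkle: your factorisation $e^{\bar Z_\theta}\bigl(\E[e^{Z_\theta-\bar Z_\theta}]-1\bigr)$ with the bound $\frac{e}{2}\cdot\frac{1}{4m}$ on the centered part gives $\frac{e}{8m}$, and after reinstating the prefactor $e^{\bar Z_\theta}\le e$ you land at $e^2/(8m)>e/(4m)$. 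The paper sidesteps this by bounding $\E[e^{Z_m}]-e^{\E Z_m}$ directly by $\sup_z e^z\cdot\Var(Z_m)$ without splitting off the prefactor.

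The genuine gap is in your final step. The constant $1.6$ has nothing to do with a delicate joint control of $e^{\bar Z_\theta}\pi(\theta)$, Stirling bounds on the Poisson pmf, or extreme regimes of $\lambda_i=e^{x_i^\top\theta}$: it is simply $\sup_\theta\pi(\theta)=\bigl(0.25\sqrt{2\pi}\bigr)^{-1}\approx 1.596\le 1.6$, the mode value of the Gaussian prior density used in that example. Because the Poisson pmf is bounded by $1$, the quantity $\bar Z_\theta$ lies in $(0,1)$ for \emph{every} $\theta$, so the regime you flag as the ``main obstacle''---$\pi(\theta)$ moderately large while $\bar Z_\theta$ grows---cannot occur; the Jensen-gap bound is already uniform over $\Theta$, and the lemma follows by multiplying two separate uniform bounds. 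Your proposed resolution via the decay of $p_\theta(u;x_i)$ for large $\lambda_i$ and the Gaussian tail would not produce the stated constant and is unnecessary.
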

\begin{proof}
We upper bound $\text{J-Gap}$ using Theorem 1 of \cite{liao2018sharpening}. To this end, set $Z_m=\frac{1}{m}\sum_{j=1}^{m}p_\theta(u_j;x_i)^\beta$ and note that since $p_\theta(u_j,x_i)^\beta\in(0,1)$ for all $u_j$ and $x_i$ - since $p_\theta(u_j,x_i)$ is a probability mass function - we have that $Z_m\in(0,1)$. Further, note that $\text{Var}(Z_m)=\text{Var}\{p_\theta(u;x_i)^\beta\}/m$ since each $u_j$ is iid. Now, apply Theorem 1 of \cite{liao2018sharpening} and Popoviciu's inequality for bounded random variables to see that
$$
\text{JGap}_m(\theta)\le \sup_{Z_m\in(0,1)}\exp\{Z_m\}\text{Var}(Z_m)\le \frac{e}{4m}.
$$    Since $\pi(\theta)=N(\theta;0,0.25^2)$, we have that 
$$
\sup_{\theta\in\Theta}\mathrm{JGap}_m(\theta)
\times\pi(\theta)\le \sup_{\theta\in\Theta}\pi(\theta)\times \frac{e}{4m}\le 1.6 \frac{e}{4m}
$$
\end{proof}
To see the benefit of this result, consider that we take $m=100$ random numbers to approximate the loss, as in our experiments, we see that $\sup_{\theta\in\Theta}\text{JGap}_m(\theta)\le 0.01$. Hence, in this example Jensen's gap is close to zero uniformly over $\theta$ and the two posteriors will be very similar.

\newpage

\subsection{Full posterior comparison for $\beta$-divergence Poisson regression}\label{app:poiss-betaD}

Figures \ref{Fig:regression_PM_vs_ZZ_n1002} and \ref{Fig:regression_ZZ_n1002} provide a full comparison of the target posterior densities of the bP-MCMC and zig-zag algorithm for different $m$ and $b$ in the case of the poisson regression model

\begin{figure}[!ht]
\begin{center}
\includegraphics[scale=0.8]{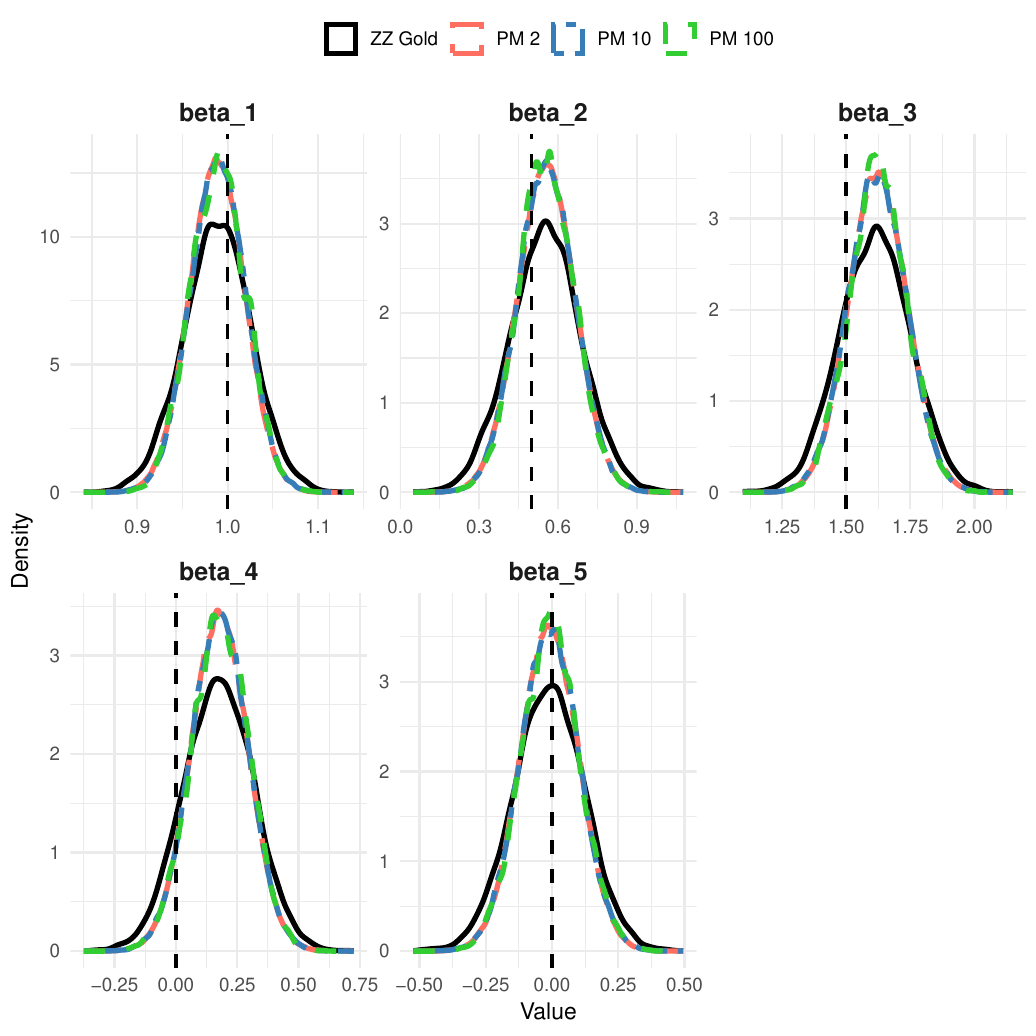}
\caption{$\beta$-divergence-based posterior density for $\theta$ in the Poisson regression model with sample size $n=1000$ as estimated by a long run of the zig-zag sampler (ZZ Gold) and block pseudo-marginal sampler with $m=2$ (PM 2), $m=10$ (PM 10) and $m=20$ (PM 20).}
\label{Fig:regression_PM_vs_ZZ_n1002}
\end{center}
\end{figure}

\begin{figure}[!ht]
\begin{center}
\includegraphics[scale=0.8]
{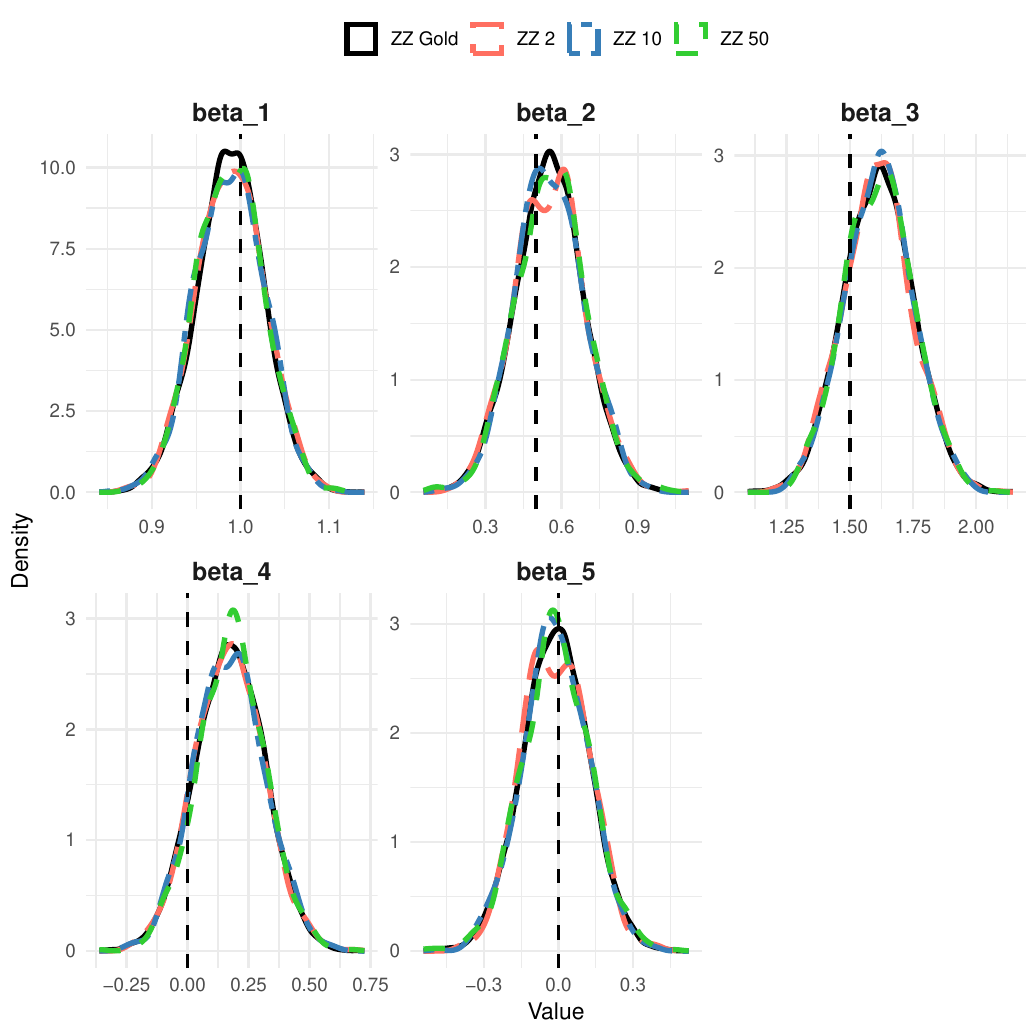}
\caption{$\beta$-divergence-based posterior density for $\theta$ in the Poisson regression model with sample size $n=1000$ as estimated by the zig-zag sampler with $b=2$ (ZZ 2), $b= 10$ (ZZ 10) and $b = 20$ (ZZ 20). Shown also are the results for a long run of the zig-zag sampler (ZZ Gold)}
\label{Fig:regression_ZZ_n1002}
\end{center}
\end{figure}

\end{document}